\newtheorem{theorem}{Theorem}[section]
\newtheorem{lemma}[theorem]{Lemma}
\newtheorem{proposition}[theorem]{Proposition}
\newtheorem{corollary}[theorem]{Corollary}
\theoremstyle{definition}
\newtheorem*{definition}{Definition}
\newtheorem{remark}[theorem]{Remark}
\newtheorem*{Index Convention}{Index Convention}
\newtheorem*{notation}{Notation}
\newtheorem{example}[theorem]{Example}
\def\keywords#1{\par\medskip
\noindent\textbf{Keywords.} #1}
\def\subjclass#1{{\renewcommand{\thefootnote}{}
\footnote{\emph{Mathematics Subject Classification (2010):} #1}}}
\begin{document}
\let\le=\leqslant
\let\ge=\geqslant
\let\leq=\leqslant
\let\geq=\geqslant
\newcommand{\e}{\varepsilon }
\newcommand{\f}{\varphi }
\newcommand{ \g}{\gamma}
\newcommand{\F}{{\Bbb F}}
\newcommand{\N}{{\Bbb N}}
\newcommand{\Z}{{\Bbb Z}}
\newcommand{\Q}{{\Bbb Q}}
\newcommand{\C}{{\Bbb C}}
\newcommand{\R}{\Rightarrow }
\newcommand{\W}{\Omega }
\newcommand{\w}{\omega }
\newcommand{\s}{\sigma }
\newcommand{\hs}{\hskip0.2ex }
\newcommand{\ep}{\makebox[1em]{}\nobreak\hfill $\square$\vskip2ex }
\newcommand{\Lr}{\Leftrightarrow }

\title{Frobenius groups of automorphisms\\ and their fixed points}

\markright{}

\author{{\sc E.\,I.~Khukhro}\\ \small Sobolev Institute of Mathematics, Novosibirsk, 630\,090,
Russia\\[-1ex] \small khukhro@yahoo.co.uk \\[-2ex]
\\{\sc N.\,Yu.~Makarenko}\\ \small Sobolev Institute of Mathematics, Novosibirsk, 630\,090,
Russia, and \\[-1ex] \small Universit\'{e} de Haute Alsace, Mulhouse, 68093,
France\\[-1ex] \small natalia\_makarenko@yahoo.fr\\[-2ex]
\\{\sc P.~Shumyatsky}\\ \small
Department of Mathematics, University of Brasilia, DF~70910-900, Brazil\\[-1ex]
\small pavel@mat.unb.br}

\date{}
\maketitle

\subjclass{Primary 17B40, 20D45; Secondary 17B70, 20D15, 20E36,  20F40, 22E25}

\begin{abstract}
Suppose that a finite group $G$ admits a Frobenius group of
automorphisms $FH$ with kernel $F$ and complement $H$ such that
the fixed-point subgroup of $F$ is trivial: $C_G(F)=1$. In this
situation various properties of $G$ are shown to be close to the
corresponding properties of $C_G(H)$. By using Clifford's theorem
it is proved that the order $|G|$ is bounded in terms
of $|H|$ and $|C_G(H)|$, the rank of $G$ is bounded in terms
of $|H|$ and the rank of $C_G(H)$, and
that $G$ is nilpotent if $C_G(H)$ is nilpotent. Lie ring methods
are used for bounding the exponent
and the nilpotency class
of $G$ in the case of metacyclic $FH$. The exponent of $G$ is
bounded in terms of $|FH|$ and the exponent of $C_G(H)$ by using Lazard's
Lie algebra associated with the Jennings--Zassenhaus
filtration and its connection with powerful subgroups. The
nilpotency class of $G$ is bounded in terms of $|H|$ and the
nilpotency class of $C_G(H)$ by considering Lie rings with a
finite cyclic grading satisfying a certain `selective nilpotency'
condition. The latter technique also yields similar results
bounding the nilpotency class of Lie rings and algebras with a
metacyclic Frobenius group of automorphisms, with corollaries for
connected Lie groups and torsion-free locally nilpotent groups
with such groups of automorphisms. Examples show that such nilpotency results
are no longer true for non-metacyclic Frobenius groups of automorphisms.\end{abstract}

\keywords{Frobenius group, automorphism, finite group, exponent, Lie ring, Lie algebra, Lie group, graded,
solvable, nilpotent}

\section{Introduction}

Suppose that a finite group $G$ admits a Frobenius group of
automorphisms $FH$ with kernel $F$ and complement $H$ such that
the fixed-point subgroup (which we call the centralizer) of $F$ is
trivial: $C_G(F)=1$. Experience shows that many properties of $G$
must be close to the corresponding properties of $C_G(H)$. For
example, when $GF$ is also a Frobenius group with kernel $G$ and
complement $F$ (so that $GFH$ is a double Frobenius group), the
second and third authors \cite{mak-shu10} proved that the
nilpotency class of $G$ is bounded in terms of $|H|$ and the
nilpotency class of $C_G(H)$. This result solved in the
affirmative Mazurov's problem 17.72(a) in Kourovka Notebook
\cite{kour}.

In this paper we derive properties of $G$ from the corresponding
properties of $C_G(H)$ in more general settings, no longer assuming
that $GF$ is also a Frobenius group. Note also that the order of $G$ is not assumed to be
coprime to the order of $FH$.

By using variations on Clifford's theorem it is
shown that  the order $|G|$ is bounded in terms
of $|H|$ and $|C_G(H)|$, the rank of $G$ is bounded in terms
of $|H|$ and the rank of $C_G(H)$,
and that $G$ is nilpotent if $C_G(H)$ is nilpotent.
By using various Lie ring methods bounds for the exponent and the
nilpotency class of $G$ are obtained in
the case of metacyclic $FH$. For bounding the exponent, we use Lazard's
Lie algebra associated with the Jennings--Zassenhaus
filtration and its connection with powerful subgroups. For bounding the
nilpotency class we consider Lie rings with a
finite cyclic grading satisfying a certain condition of `selective
nilpotency'. The latter technique yields similar
results giving nilpotency of bounded nilpotency class (also known as nilpotency index)
of Lie rings and
algebras with a metacyclic Frobenius group of automorphisms, with
corollaries for connected Lie groups and
torsion-free locally nilpotent groups with such groups of
automorphisms. Examples show that such nilpotency results
are no longer true for non-metacyclic Frobenius groups of automorphisms.

We now describe the results and the structure of the paper in more
detail. Recall that we consider a finite group $G$ admitting a
Frobenius group of automorphisms $FH$ with kernel $F$ and
complement $H$ such that $C_G(F)=1$. It is worth noting from the outset that
since $F$ is nilpotent being
a Frobenius kernel, the condition $C_G(F)=1$ implies the
solvability of $G$ by a theorem of Belyaev and Hartley \cite{ha-be}
based on the classification of finite simple groups.

In \S\,2 we begin establishing the connection between the
properties of $G$ and $C_G(H)$ by proving that the orders satisfy
the equation $|G|=|C_G(H)|^{|H|}$, the rank of $G$ is bounded in
terms of $|H|$ and the rank of $C_G(H)$, and that $G$ is nilpotent
if $C_G(H)$ is nilpotent (Theorem~\ref{t-orn}). These results are
proved by using Clifford's theorem on the basis of information
about the fixed points of $H$ in $FH$-invariant sections of $G$.
In particular, we prove that these fixed points are the images of
elements of $C_G(H)$ (Theorem~\ref{t-fp}), which is a non-trivial
fact since the orders of $G$ and $H$ are not assumed to be
coprime.

In \S\,3 we deal with bounding the exponent. First we develop the
requisite Lie ring technique, some of which was used earlier by
the first and third authors in~\cite{khushu}. We define Lazard's
Lie algebra $L_p(P)$ associated with the Jennings--Zassenhaus
filtration of a finite $p$-group $P$ and recall Lazard's
observation that elements of $P$ of order $p^k$ give rise to
elements of $L_p(P)$ that are ad-nilpotent of index $p^k$. A
useful property is the existence of a powerful subgroup of $P$ of
index bounded in terms of the number of generators of $P$ and the
nilpotency class of $L_p(P)$. A key lemma
gives a
bound for the nilpotency class of a finitely generated solvable Lie
algebra `saturated' with ad-nilpotent elements. The main result of the section is
Theorem~\ref{main}: if a finite group $G$ admits a
Frobenius group of automorphisms $FH$ with cyclic kernel $F$ and
complement $H$ such that $C_G(F)=1$, then the exponent of $G$ is
bounded in terms of $|FH|$ and the exponent of $C_G(H)$. At
present it is unclear, even in the case where $GFH$ is a double
Frobenius group, if the bound can be made independent of $|F|$,
which would give an affirmative answer to part (b) of Mazurov's
problem 17.72 in~\cite{kour}. In the proof of Theorem~\ref{main} a reduction to finite
$p$-groups is given by Dade's theorem~\cite{dade}. Then nilpotency of
Lazard's Lie algebra gives a reduction to powerful $p$-groups, to
which a lemma from \S\,2 about fixed points of $H$ is applied.

In \S\,4 a different Lie ring theory is developed, which is used
later in \S\,5 for bounding the nilpotency class of groups and Lie rings
with a metacyclic Frobenius group $FH$ of automorphisms.
This theory is stated in terms of a Lie ring $L$ with a
finite cyclic grading (which naturally arises from the `eigenspaces'
for $F$).
The condition of the fixed-point subring $C_L(H)$ being nilpotent of
class $c$ implies certain restrictions on the
commutation of the
grading
components, which we nickname `selective nilpotency'. For example,
in \cite{khu09} it was shown that if $C_L(F)=0$,
$c=1$, and $|F|$ is a prime, then each component commutes
with all but at most $(c,|H|)$-boundedly many components, which in
turn implies a strong bound for the nilpotency class of~$L$. For greater values
of $c$ more complicated `selective nilpotency' conditions naturally arise;
similar conditions were exploited earlier in the paper
\cite{mak-shu10} on double Frobenius groups.

In \S\,5 we obtain bounds for the
nilpotency class of groups and Lie rings with
metacyclic Frobenius groups of automorphisms. (Examples show that such results
are no longer true for non-metacyclic Frobenius groups of automorphism.)
The main result for finite groups is Theorem~\ref{group-nilpotency}: if a finite group
$G$ admits a Frobenius group of automorphisms $FH$ with cyclic
kernel $F$ and complement $H$ such that $C_G(F)=1$ and $C_G(H)$ is
nilpotent of class $c$, then $G$ is nilpotent of class bounded in terms
of $c$ and $|H|$  only. The proof is based on the analogous result
for Lie rings (Theorem~\ref{liering}).

We state separately the result for Lie algebras as Theorem~\ref{liealg}:
if a Lie algebra $L$ over any field admits a
Frobenius group of automorphisms $FH$ with cyclic kernel $F$ such
that $C_L(F)=0$ and $C_L(H)$ is nilpotent of class $c$, then $L$
is nilpotent of class bounded in terms of $c$ and $|H|$ only.
(Here $C_L(F)$ and $C_L(H)$ denote the fixed-point subalgebras for
$F$ and $H$.) One corollary of this theorem is for connected Lie
groups with  metacyclic Frobenius groups of automorphisms
satisfying similar conditions (Theorem~\ref{gr-lie}). Another
application is for torsion-free locally nilpotent groups with such
groups of automorphisms (Theorem~\ref{torsion-free}).

\medskip

The induced group of automorphisms of an
invariant section is often denoted by the same letter (which is a
slight abuse of notation as the action may become non-faithful).
We use the abbreviation, say, ``$(m,n)$-bounded'' for ``bounded
above in terms of~$m,\,n$ only''.

\section{Fixed points of Frobenius complements}

We begin with a
theorem of Belyaev
and Hartley \cite{ha-be} based on the classification
of finite simple groups (see also \cite{ku-shu}).

\begin{theorem}[{\cite[Theorem~0.11]{ha-be}}]\label{razr}
Suppose that a finite group $G$ admits a
nilpotent group of automorphisms $F$ such that $C_G(F)=1$. Then $G$ is solvable.
\end{theorem}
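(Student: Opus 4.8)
The plan is to reduce, by a minimal-counterexample argument, to the assertion that a finite nonabelian simple group admits no nilpotent group of automorphisms with trivial centralizer, and then to dispose of that assertion by running through the classification of finite simple groups; this classification-dependence is the reason the result is quoted here rather than reproved.

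So suppose $G$ is a counterexample of least order: a non-solvable finite group carrying a nilpotent group of automorphisms $F$ with $C_G(F)=1$. Two facts drive the reduction. First, every proper $F$-invariant subgroup $M<G$ is solvable, since $C_M(F)\le C_G(F)=1$ and $|M|<|G|$, so $M$ is solvable by minimality. Second --- and this is the one genuinely delicate point, where nilpotency of $F$ is used --- for every $F$-invariant normal subgroup $N\trianglelefteq G$ one has $C_{G/N}(F)=1$; granting this, every proper quotient $G/N$ with $N\neq 1$ is solvable by minimality. Therefore $G$ has no proper nontrivial $F$-invariant normal subgroup, for such an $N$ together with $G/N$ would both be solvable and force $G$ solvable. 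In particular the Fitting subgroup $F(G)$, being characteristic and hence $F$-invariant, is trivial, so $F^{*}(G)=E(G)$ is a direct product of nonabelian simple groups; it is $F$-invariant and nontrivial, hence $G=F^{*}(G)=S_1\times\cdots\times S_k$ with each $S_i$ nonabelian simple.

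Next I would show that $F$ permutes $\{S_1,\dots,S_k\}$ transitively: if $O$ is an $F$-orbit and $M=\prod_{S_i\in O}S_i$, then $M$ is $F$-invariant, non-solvable, and satisfies $C_M(F)=1$, so minimality of $G$ forces $M=G$. Writing $F_1$ for the stabiliser in $F$ of $S_1$ --- a subgroup of a nilpotent group, hence nilpotent --- and $A$ for the (possibly non-faithful) group of automorphisms it induces on $S_1$, a standard computation of fixed points in the wreath-product action (in the spirit of Clifford's theorem) gives $C_{S_1}(A)=C_{S_1}(F_1)\cong C_G(F)=1$. Thus the theorem is reduced to the statement: \emph{a finite nonabelian simple group $S$ has no nilpotent group of automorphisms $A$ with $C_S(A)=1$}; here one may further use that $\operatorname{Out}(S)$ is solvable (Schreier's conjecture) to constrain the structure of $A$.

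It remains to establish this simple-group statement, and this is where the real work --- and the main obstacle --- lies: one argues family by family through the classification. For $S$ of Lie type in the defining characteristic $p$ one lets a Sylow $p$-subgroup of $A$ act on a suitable $A$-invariant unipotent subgroup and finds nontrivial fixed points there; for $S$ of Lie type in cross characteristic one argues with an $A$-invariant maximal torus, Borel subgroup, or Sylow subgroup; the alternating and the sporadic groups are handled by direct inspection. In each case one produces a nontrivial element of $S$ fixed by $A$, contradicting $C_S(A)=1$. Apart from this case analysis the argument is essentially formal, the only subtle ingredient being the passage-to-quotients step $C_{G/N}(F)=1$ for nilpotent $F$.
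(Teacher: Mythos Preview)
The paper does not prove this theorem at all: it is stated as a citation of \cite[Theorem~0.11]{ha-be} and used as a black box, so there is no ``paper's own proof'' to compare your proposal against. Your outline is a faithful sketch of how the Belyaev--Hartley argument actually goes: reduce by minimality to a characteristically simple group, then to a single nonabelian simple factor with a nilpotent group of automorphisms acting fixed-point-freely, and finally eliminate this configuration by running through the classification.

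A couple of remarks on the reduction steps. The quotient step $C_{G/N}(F)=1$ that you flag as delicate is precisely Lemma~\ref{l-car} of the present paper, proved via the Carter-subgroup property of $F$ in $GF$; you are right that this is where nilpotency of $F$ enters essentially. Your wreath-product identification $C_{S_1}(F_1)\cong C_G(F)$ is correct (choose a transversal of $F_1$ in $F$ and average), and since $F_1$ is a subgroup of the nilpotent group $F$, the induced group $A\le{\rm Aut}(S_1)$ is nilpotent and nontrivial (else $C_{S_1}(A)=S_1\ne 1$). So the reduction is sound.

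Where you stop --- the case-by-case elimination for simple $S$ --- is indeed the substantive content of \cite{ha-be}, and you are right that it cannot be reproduced here in a line or two; your family-by-family plan (defining characteristic via unipotent subgroups, cross characteristic via tori, alternating and sporadic by inspection, with Schreier's conjecture constraining the outer part of $A$) is the correct shape of that argument. In short: your proposal is a correct high-level proof sketch, and its relationship to the paper is simply that the paper quotes the result rather than proving it.
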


We now discuss the question of covering the fixed points of a
group of automorphisms in an invariant quotient by the fixed
points in the group. Let $A\leq {\rm Aut\,}G$ for a finite group
$G$ and let $N$ be a normal
$A$-invariant subgroup of $G$. It is well-known that if $(|A|,|N|)=1$, then
$C_{G/N}(A)=C_G(A)N/N$.
If we do not
assume that $(|A|,|N|)=1$, the equality $C_{G/N}(A)=C_G(A)N/N$ may no longer
be true. However there are some important cases when it does hold.
In particular, we have the following lemma.

\begin{lemma}\label{l-car}
Let $G$ be a finite group admitting a
nilpotent group of automorphisms $F$ such that $C_G(F)=1$.
If $N$ is a normal $F$-invariant subgroup of $G$, then $C_{G/N}(F)=\nobreak 1$.
\end{lemma}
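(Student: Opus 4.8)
The plan is to prove this by induction on $|G|$, reducing to the case where $N$ is a minimal normal subgroup and then exploiting the nilpotency of $F$ together with the hypothesis $C_G(F)=1$.

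First I would observe that if the result holds whenever $N$ is contained in a normal $F$-invariant subgroup $M$ with $C_{G/M}(F)=1$, then a two-step argument handles the general case: given arbitrary $N$, one would like to interpose a normal $F$-invariant subgroup. So the real content is the case of $N$ a minimal $F$-invariant normal subgroup of $G$. In that case $N$ is an elementary abelian $p$-group for some prime $p$ (using that $G$ is solvable by Theorem~\ref{razr}, or directly that a minimal normal subgroup of a solvable group is elementary abelian; note solvability of $G$ follows from $C_G(F)=1$ and $F$ nilpotent). Take a preimage $\bar g N \in C_{G/N}(F)$, so $[g,F]\subseteq N$. The goal is to show $\bar g N$ is trivial, i.e. $g\in N$.

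The key step is to use that $F$ is nilpotent, hence its action is controlled by a single (or finitely many) primes at a time, and more importantly to use a Maschke/coprime-type argument \emph{within} the relevant prime. Here is the crux: if $p\nmid |F|$, then on the $p$-group $N$ the action of $F$ is coprime, and $C_{G/N}(F)$ maps onto $C_G(F)N/N$ by the standard coprime result, so $C_{G/N}(F)=C_G(F)N/N = N/N = 1$. The genuinely new case is $p\mid |F|$. Then, since $F$ is nilpotent, write $F=P\times Q$ with $P$ the Sylow $p$-subgroup and $Q$ the $p'$-part (a Hall subgroup), both $F$-invariant hence normal in $FH$ is not needed, just that $P,Q$ are characteristic in $F$. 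Consider $C_G(Q)$: this is $P$-invariant, and $C_{C_G(Q)}(P)\leq C_G(F)=1$, while by Thompson-type fixed-point arguments one relates $C_{G/N}(Q)$ to $C_G(Q)$ via coprimeness of $Q$ with the $p$-group $N$. Then one is reduced to a $p$-group $P$ acting on sections where the obstruction lies; here one uses that a nontrivial $p$-group acting on a nontrivial $p$-group has nontrivial fixed points, to force $C_{C_G(Q)}(P)$ to be nontrivial unless $C_G(Q)\cap(\text{relevant part})$ is trivial — chasing this contradiction with $C_G(F)=1$ pins down $g\in N$.

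The main obstacle I expect is precisely the non-coprime case $p \mid |F|$: one cannot invoke the clean statement $C_{G/N}(A)=C_G(A)N/N$, and instead must combine (i) the decomposition of the nilpotent group $F$ into its Sylow subgroups, (ii) coprime action of the $p'$-part $Q$ of $F$ on the elementary abelian $p$-group $N$ to descend to $C_G(Q)$-level, and (iii) the non-vanishing of fixed points of a $p$-group on a $p$-group to derive a contradiction with $C_G(F)=1$ — the delicate point being to set up the sections so that these three ingredients interlock without circularity. A cleaner alternative, which I would pursue first, is to argue directly: suppose $1\neq \bar gN\in C_{G/N}(F)$; then $\langle g\rangle^F N / N$ is a nontrivial $F$-invariant subgroup of $G/N$ on which $F$ acts trivially, so its full preimage $R$ satisfies $[R,F]\leq N$; applying $[R,F,F]=[R,F]$ (again coprime or nilpotency considerations on the section $R/N$ versus $N$) and the hypothesis $C_R(F)\leq C_G(F)=1$ should collapse $R$ into $N$, contradicting $\bar gN\neq 1$. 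Making this ``three subgroup / focal'' manipulation rigorous in the non-coprime setting is where the care is needed.
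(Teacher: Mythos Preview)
Your plan is workable but takes a substantially longer route than the paper's two-line argument. The paper simply observes that $F$, being nilpotent and self-normalizing in the semidirect product $GF$ (since $[N_G(F),F]\leq G\cap F=1$ forces $N_G(F)\leq C_G(F)=1$), is a Carter subgroup of the solvable group $GF$; images of Carter subgroups under quotient maps are again Carter subgroups, so $NF/N$ is self-normalizing in $GF/N$, whence $C_{G/N}(F)\leq (NF/N)\cap(G/N)=1$. No induction, no prime decomposition.

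Your first approach via the decomposition $F=P\times Q$ (with $P$ the Sylow $p$-subgroup, $N$ elementary abelian of exponent $p$) does go through, but the crucial step you only gesture at is this: $C_N(Q)=1$, because otherwise the $p$-group $P$ acting on the nontrivial $p$-group $C_N(Q)$ would have a nontrivial fixed point, contradicting $C_{C_N(Q)}(P)=C_N(F)\leq C_G(F)=1$. Once that is in hand, take $gN\in C_{G/N}(F)$; by coprime action of $Q$ on the $p$-group $N$ write $gN=cN$ with $c\in C_G(Q)$; since $P$ and $Q$ commute one checks $[c,P]\subseteq C_G(Q)$, while $[c,P]\subseteq N$ because $cN$ is $P$-fixed, so $[c,P]\subseteq C_N(Q)=1$ and thus $c\in C_G(P)\cap C_G(Q)=C_G(F)=1$. (Exactly this ``$C_N(F_{p'})=1$'' reduction is in fact used later in the paper, in the proof of Theorem~\ref{t-fp}.) So the ingredients you list do interlock, but your write-up does not isolate $C_N(Q)=1$ as the linchpin.

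By contrast, your ``cleaner alternative'' has a genuine gap: the identity $[R,F,F]=[R,F]$ is a coprime-action fact and there is no reason for it to hold here when $p\mid |F|$; that route should be dropped rather than pursued.
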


\begin{proof}
Since $F$ is a Carter subgroup of $GF$, it follows that $NF/N$ is a
Carter subgroup of $GF/N$. Hence $C_{G/N}(F)=1$.
\end{proof}

The following theorem was proved in \cite{khu10} under the additional coprimeness assumption
 $(|N|,|F|)=1$, so here we only have to provide a reduction to this case.

\begin{theorem} \label{t-fp}
Suppose that a finite group $G$ admits a
Frobenius group of automorphisms $FH$ with kernel
$F$ and complement $H$. If $N$ is an
$FH$-invariant normal subgroup of $G$ such that
$C_N(F)=1$, then $C_{G/N}(H)=C_G(H)N/N$.
\end{theorem}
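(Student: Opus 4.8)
The plan is to reduce to the coprime case $(|N|,|F|)=1$ already settled in \cite{khu10}, arguing by induction on $|N|$; the case $N=1$ is trivial, so assume $N\neq 1$. Since $F$ is nilpotent and $C_N(F)=1$, Theorem~\ref{razr} shows that $N$ is solvable. Hence $N$ contains a subgroup $M\neq 1$ that is normal in $G$, invariant under $FH$, and minimal subject to these properties ($N$ itself is such a subgroup, so the family is non-empty). A standard argument shows that $M$ is elementary abelian of prime-power order: $[M,M]$ is a proper characteristic subgroup of $M$, forcing $M$ to be abelian by minimality; the Sylow $p$-subgroups of $M$ are then characteristic, forcing $M$ to be a $p$-group; and $\Omega_1(M)$ is characteristic, forcing $M=\Omega_1(M)$.

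The crucial observation is that this layer $M$ can be dealt with by one of the two known statements, according to the prime $p$. If $p\nmid|F|$, then $(|M|,|F|)=1$, and since $C_M(F)\le C_N(F)=1$, the theorem of \cite{khu10} applies to the triple $(G,FH,M)$ and gives $C_{G/M}(H)=C_G(H)M/M$. If instead $p\mid|F|$, then, because $FH$ is a Frobenius group and so $(|F|,|H|)=1$, we get $p\nmid|H|$, hence $(|M|,|H|)=1$; the well-known coprimeness fact recalled above (applied with $A=H$ and the normal $H$-invariant subgroup $M$) then again yields $C_{G/M}(H)=C_G(H)M/M$. Thus in all cases $C_{G/M}(H)=C_G(H)M/M$.

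It remains to glue this with the inductive hypothesis. Put $\bar G=G/M$ and $\bar N=N/M$; then $\bar N$ is an $FH$-invariant normal subgroup of $\bar G$ with $|\bar N|<|N|$, and $C_{\bar N}(F)=C_{N/M}(F)=1$ by Lemma~\ref{l-car}. By induction, $C_{\bar G/\bar N}(H)=C_{\bar G}(H)\,\bar N/\bar N$. Identifying $\bar G/\bar N$ with $G/N$ compatibly with the $H$-action, the left-hand side is $C_{G/N}(H)$, while the right-hand side is the image in $G/N$ of $C_{G/M}(H)=C_G(H)M/M$, namely $C_G(H)N/N$. This gives $C_{G/N}(H)=C_G(H)N/N$, completing the induction.

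The argument is largely bookkeeping, and I expect no serious obstacle; the one point that makes it work is that the Frobenius hypothesis $(|F|,|H|)=1$ guarantees that each layer $M$ is coprime to $|F|$ or to $|H|$, so that the two inputs --- the theorem of \cite{khu10} and the elementary coprime statement --- between them cover every layer. The only matters needing attention are the solvability of $N$ (needed to run the descent at all) and the inheritance $C_{N/M}(F)=1$, which is precisely Lemma~\ref{l-car}.
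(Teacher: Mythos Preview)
Your proof is correct. The inductive architecture is the same as the paper's: peel off an elementary abelian layer at the bottom of $N$, handle that layer directly, and induct using Lemma~\ref{l-car} to maintain $C_{N/M}(F)=1$. The difference lies in how the single layer $M$ (an elementary abelian $p$-group) is treated. The paper replaces the kernel $F$ by its Hall $p'$-part $F_{p'}$, verifies that $C_M(F_{p'})=1$ (since a $p$-group $F_p$ would have fixed points on a non-trivial $p$-group $C_M(F_{p'})$), and then applies \cite{khu10} to the Frobenius group $F_{p'}H$, which now has kernel of order coprime to $|M|$. You instead split into two cases: if $p\nmid|F|$ you apply \cite{khu10} directly, and if $p\mid|F|$ you observe that then $p\nmid|H|$ (since $(|F|,|H|)=1$ in a Frobenius group) and invoke the elementary coprime fact $C_{G/M}(H)=C_G(H)M/M$ for $(|M|,|H|)=1$. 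Your dichotomy is slightly more elementary in that it avoids checking that $F_{p'}H$ is again Frobenius with fixed-point-free kernel on $M$; the paper's route has the virtue of treating both cases uniformly through \cite{khu10}. Either way the substance is the same reduction to the coprime result.
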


\begin{proof}
As a Frobenius kernel, $F$ is nilpotent. Hence $N$
is solvable by Theorem~\ref{razr}.

Consider an unrefinable $FH$-inva\-ri\-ant normal series of $G$
\begin{equation*}
G>N=N_1 >N_2>\dots >N_k> N_{k+1}=1
\end{equation*}
connecting $N$ with $1$; its factors $N_i/N_{i+1}$ are elementary
abelian. We apply induction on $k$ to find an element of $C_G(H)$
in any $gN\in C_{G/N}(H)$.

For $k>1$ consider the quotient $G/N_k$ and
the induced group of automorphisms $FH$.
By Lemma~\ref{l-car}, $C_{G/N_k}(F)=1$. By induction there is $c_1N_k\in C_{G/N_k}(H)\cap
gN/N_k$, and it remains to find a required element $c\in
C_G(H)\cap c_1N_k\subseteq C_{G}(H)\cap gN$. Thus the proof of the
induction step will follow from the case $k=1$.

Let $k=1$; then $N=N_k$ is a $p$-group for some prime $p$. Let
$F=F_p\times F_{p'}$, where $F_p$ is the Sylow $p$-subgroup
of~$F$. Since $C_{N}(F_{p'})$ is $F_p$-invariant, we must actually
have $C_{N}(F_{p'})=1$. Indeed, otherwise the $p$-group $F_p$
would have non-trivial fixed points on the $p$-group
$C_{N}(F_{p'})$, and clearly $C_{C_{N}(F_{p'})}(F_p)=C_{N}(F)$
(this argument works even if $F_p=1$). Thus, the hypotheses of the
theorem also hold for $G$ with the Frobenius group of automorphisms
$F_{p'}H$ satisfying the additional condition $(|N|,|F_{p'}|)=1$. Now
\cite[Theorem~1]{khu10} can be applied to produce a required
fixed point.
\end{proof}

We now prove a few useful lemmas about a finite group with a
Frobenius group of automorphisms.

\begin{lemma}\label{l-gen}
Suppose that a finite group $G$ admits a
Frobenius group of automorphisms $FH$ with kernel
$F$ and complement $H$ such that $C_G(F)=1$.
Then $G=\langle C_G(H)^f\mid f\in F\rangle$.
\end{lemma}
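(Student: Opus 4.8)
The plan is to prove the equality by induction on $|G|$, writing $G_{0}=\langle C_G(H)^f\mid f\in F\rangle$ for the right-hand side. First I would check that $G_{0}$ is $FH$-invariant: conjugation by an element of $F$ permutes the generators, while for $x\in C_G(H)$, $f\in F$, $h\in H$ we have $(x^{f})^{h}=x^{fh}=(x^{h})^{f^{h}}=x^{f^{h}}\in C_G(H)^{f^{h}}$ with $f^{h}\in F$, so $G_{0}$ is $H$-invariant as well. If $G=1$ there is nothing to prove, so assume $G\neq1$. By Theorem~\ref{razr} the group $G$ is solvable, hence has a minimal $FH$-invariant normal subgroup $N$, necessarily an elementary abelian $p$-group, and $C_{N}(F)\le C_{G}(F)=1$.

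Next I would record how $FH$ acts on the sections $N$ and $G/N$. The kernel $F$ cannot act trivially on $N$ (else $N\le C_G(F)=1$), nor on $G/N$ unless $G=N$ (else $C_{G/N}(F)=G/N$, contradicting Lemma~\ref{l-car}); and, by a standard property of Frobenius groups, a quotient of $FH$ by a normal subgroup contained in $F$ is again a Frobenius group with complement isomorphic to $H$. Hence, away from the case $G=N$, the induced group of automorphisms of each of $N$ and $G/N$ is a Frobenius group with complement $H$ and kernel a quotient of $F$, and in each case that kernel has trivial centralizer on the section in question, by $C_{N}(F)=1$ and Lemma~\ref{l-car} respectively. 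Thus both sections fall under the hypotheses of the lemma.

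If $N\neq G$, applying the induction hypothesis to $G/N$ gives $G/N=\langle C_{G/N}(H)^{f}\mid f\in F\rangle=G_{0}N/N$, so $G=G_{0}N$; applying it to $N$ gives $N=\langle C_{N}(H)^{f}\mid f\in F\rangle$, and since $C_{N}(H)=N\cap C_{G}(H)\le C_{G}(H)$ this gives $N\le G_{0}$, whence $G=G_{0}N=G_{0}$. It remains to treat the base case $N=G$, in which $G$ is an irreducible $\F_{p}[FH]$-module with $C_{G}(F)=0$; since $G_{0}$ is then the nonzero $FH$-submodule generated by $C_{G}(H)$, by irreducibility it suffices to show $C_{G}(H)\neq0$. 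Here I would first pass to coprime action: writing $F=F_{p}\times F_{p'}$, if $C_{G}(F_{p'})\neq0$ then the $p$-group $F_{p}$ would have nontrivial fixed points on this nontrivial $p$-group, forcing $C_{G}(F)\neq0$; so $C_{G}(F_{p'})=0$, and since $F_{p'}$ is characteristic in $F$ and $F_{p'}H$ is still a Frobenius group leaving $C_{G}(H)$ unchanged, we may assume $(|F|,p)=1$. Then $C_{G}(H)\neq0$ by the well-known fact that a Frobenius group acting coprimely on a nontrivial module with trivial centralizer of the kernel has a nontrivial centralizer of the complement (indeed $|G|=|C_{G}(H)|^{|H|}$ in this case); after extending scalars to $\overline{\F}_{p}$ this follows from Clifford's theorem together with Glauberman's lemma, which forces $H$ to permute the nontrivial $F$-homogeneous components of $G$ in free orbits of length $|H|$, so that an $H$-orbit sum of a nonzero homogeneous vector is a nonzero $H$-fixed point.

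The only real obstacle is this last, coprime, step: the inductive reduction is routine once it is set up, and essentially all of the representation-theoretic content is concentrated in the statement that $C_{G}(H)\neq0$ whenever $C_{G}(F)=0$ on a nontrivial module.
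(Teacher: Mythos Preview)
Your overall strategy matches the paper's: descend to an irreducible ${\Bbb F}_p[FH]$-section $S$, reduce to the case $p\nmid|F|$, and use Clifford's theorem to show $C_S(H)\ne 0$, whence $\langle C_S(H)^F\rangle=S$ by irreducibility. The paper does this factor by factor along an unrefinable $FH$-invariant normal series rather than by induction on $|G|$, but the content is the same.

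There is, however, a real gap in your inductive step. From the induction hypothesis on $G/N$ you obtain $G/N=\langle C_{G/N}(H)^f\mid f\in F\rangle$, and you then write ``$=G_0N/N$'' without justification. This second equality amounts to $C_{G/N}(H)=C_G(H)N/N$, which is exactly Theorem~\ref{t-fp} of the paper and is nontrivial precisely because $|N|$ and $|H|$ need not be coprime. The paper proves Theorem~\ref{t-fp} separately (by reduction to the coprime situation of \cite{khu10}) and invokes it at this very spot in its own proof of the lemma. Your separate induction on $N$, yielding $N\le G_0$, does not bypass the problem: you still need the generators $C_{G/N}(H)$ of $G/N$ to land in $G_0N/N$. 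Once you insert the reference to Theorem~\ref{t-fp}, the argument goes through. (A minor side remark: in the base case the free $H$-action on the nontrivial $F$-homogeneous components follows from $H$ being fixed-point-free on every $H$-invariant section of $F$ rather than from Glauberman's lemma as such; the paper records the conclusion as Lemma~\ref{l-free}.)
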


\begin{proof}
The group $G$ is
solvable by Theorem~\ref{razr}. Consider an
unrefinable $FH$-invariant normal series
\begin{equation}\label{riad}
G=G_1 >G_2>\dots >G_k> G_{k+1}=1.
\end{equation}
It is clearly sufficient to prove that every factor $S=
G_i/G_{i+1}$ of this series is covered by $\langle
C_{G_{i}}(H)^f\mid f\in F\rangle$, that is, $$\langle C_{G_i}(H)^f\mid
f\in F\rangle G_{i+1}/G_{i+1}=G_{i}/G_{i+1}.$$
By
Theorem~\ref{t-fp}, this is the same as $\langle C_S(H)^f\mid f\in
F\rangle=S$. Recall that $C_S(F)=1$ by Lemma~\ref{l-car}. Then
Clifford's theorem can be applied to show that $C_S(H)\ne 1$.

Recall that for a group $A$ and a field $k$, a \textit{free
$kA$-module of dimension $n$} is a direct sum of $n$ copies of the
group algebra $kA$, each of which can be regarded as a vector
space over $k$ of dimension $|A|$ with a basis $\{v_g\mid g\in
A\}$ labelled by elements of $A$ on which $A$ acts in a regular
representation: $v_gh=v_{gh}$. By the Deuring--Noether theorem
\cite[Theorem~29.7]{cur-rai} two representations over a smaller
field are equivalent if they are equivalent over a larger field;
therefore being a free $kA$-module is equivalent to being a free
$\bar{k}A$-module for any field extension $\bar k\supseteq k$, as
the corresponding permutational matrices are defined over the prime field.

We denote by ${\Bbb F}_p$ the field of $p$ elements.

\begin{lemma}\label{l-free}
Each factor $S$ of \eqref{riad}
is a free ${\Bbb F}_pH$-module for the appropriate prime~$p$.
\end{lemma}

\begin{proof}
Again, we only provide reduction to the coprime case considered in \cite{khu10}.
Let $S$ be an elementary $p$-group; then let $F=F_p\times
F_{p'}$ as in the proof of Theorem~\ref{t-fp}. As therein, we must
actually have $C_{S}(F_{p'})=1$. Refining $S$ by a non-refinable
$F_{p'}H$-invariant normal series we obtain factors that are
irreducible ${\mathbb F}_pF_{p'}H$-modules. Having the additional
condition that $p\nmid |F_{p'}|$ we can now apply, for example,
\cite[Lemma~2]{khu10} to obtain that each of them is a free
${\mathbb F}_pH$-module, and therefore $S$ is also a free
${\mathbb F}_pH$-module.
\end{proof}

We now finish the proof of Lemma~\ref{l-gen}. By Lemma~\ref{l-free}, $S$ is a free
${\Bbb F}_pH$-module, which means that $S=\bigoplus _{h\in H}Th$ for some ${\Bbb F}_pH$-sub\-mo\-dule $T$.
Hence, $C_S(H)\ne 0$, as $0\ne \sum _{h\in H}th\in C_S(H)$ for any $0\ne t\in T$.
Since the series \eqref{riad} is non-refinable, the ${\mathbb
F}_pFH$-module $S$ is irreducible. Therefore,
$$0\ne \langle C_S(H)^{HF}\rangle
=\langle C_S(H)^{F}\rangle =S,$$
which is exactly what we need.
\end{proof}

\begin{lemma}\label{sylow}
Suppose that a finite group $G$ admits a Frobenius group of
automorphisms $FH$ with kernel $F$ and complement $H$ such that
$C_G(F)=1$. Then for each prime $p$ dividing $|G|$ there is a
unique $FH$-invariant Sylow $p$-subgroup of $G$.
\end{lemma}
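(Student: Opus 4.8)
The plan is to reduce at once to a statement about $F$ alone: it suffices to show that $G$ has a \emph{unique} $F$-invariant Sylow $p$-subgroup. Indeed, $H$ normalises $F$ and hence permutes the $F$-invariant Sylow $p$-subgroups of $G$; if there is only one, it is fixed by $H$, so it is $FH$-invariant, and any $FH$-invariant Sylow $p$-subgroup, being $F$-invariant, coincides with it. Recall that $F$ is nilpotent and $C_G(F)=1$, so $G$ is solvable by Theorem~\ref{razr}; I would now induct on $|G|$.

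Take a minimal $F$-invariant normal subgroup $M$ of $G$, an elementary abelian $r$-group with $C_M(F)\le C_G(F)=1$. By Lemma~\ref{l-car}, $C_{G/M}(F)=1$, so by induction $G/M$ has a unique $F$-invariant Sylow $p$-subgroup when $p\mid|G/M|$, say $G_0/M$ with $G_0$ $F$-invariant. If $r=p$, then $M$ lies in every Sylow $p$-subgroup of $G$, and $G_0$ (or $M$ itself when $p\nmid|G/M|$) is the required unique $F$-invariant Sylow $p$-subgroup. If $r\neq p$, then $G_0=M\rtimes P_0$ by Schur--Zassenhaus, with $P_0$ a Sylow $p$-subgroup of $G$; and if $G_0<G$, the induction hypothesis applied to $G_0$ finishes the argument, because any $F$-invariant Sylow $p$-subgroup $P'$ of $G$ satisfies $P'M/M=G_0/M$ and hence lies in $G_0$. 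Everything thus reduces to the case $G_0=G$, that is, $G=M\rtimes P$ with $M$ the normal Sylow $q$-subgroup ($q\neq p$) and $M$ a minimal $F$-invariant normal subgroup. This non-coprime case is the heart of the proof and the main obstacle.

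In this case I would first arrange that $O_p(G)=1$, replacing $M$ by a minimal $F$-invariant normal $p$-subgroup otherwise (which returns us to the settled case $r=p$). Then $F(G)=O_q(G)=M$, so $C_G(M)\le M$, whence $C_M(P)=Z(G)$. But $Z(G)$ is an $F$-invariant normal $q$-subgroup of $G$ contained in $M$, so by minimality $Z(G)\in\{1,M\}$; and $Z(G)=M$ would give $G=C_G(M)=M$, i.e.\ $P=1$, contradicting $p\mid|G|$. Hence $C_M(P)=0$, so $H^1(P,M)=0$ by coprimeness and $m\mapsto P^m$ is a bijection from $M$ onto the set of Sylow $p$-subgroups of $G$. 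Transporting the conjugation action along this bijection, $G\rtimes F$ acts on the $\mathbb{F}_q$-affine space $M$, with $F$ acting affinely through linear automorphisms $\varphi|_M$ of $M$ and a translation cocycle.

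The remaining ingredient is that $C_M(F_{q'})=1$ for the Hall $q'$-subgroup $F_{q'}$ of $F$: otherwise $C_M(F_{q'})$ is a nontrivial $q$-group on which the $q$-group $F_q$ acts, producing nontrivial fixed points, which equal $C_M(F)\le C_G(F)=1$. Since $q\nmid|F_{q'}|$, the usual averaging argument gives a fixed point of $F_{q'}$ on the affine space $M$, and the fixed-point set is a coset of $C_M(F_{q'})=0$, hence a single point; so $G$ has a unique $F_{q'}$-invariant Sylow $p$-subgroup $P_*$. Finally, $F_q$ normalises $F_{q'}$ since $F$ is nilpotent, so it permutes the $F_{q'}$-invariant Sylow $p$-subgroups and therefore fixes the unique one $P_*$; thus $P_*$ is $F$-invariant, and it is the only such subgroup since every $F$-invariant Sylow $p$-subgroup is $F_{q'}$-invariant. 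This closes the induction and, together with the first reduction, proves the lemma.
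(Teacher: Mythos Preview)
Your proof is correct and takes a genuinely different route from the paper's. The paper argues directly via Carter subgroups: since $C_G(F)=1$ and $F$ is nilpotent, $F$ is a Carter subgroup of the solvable group $GF$; a brief analysis of the chief factors of $GF$ (those inside $G$ are all eccentric because $C_V(F)=1$ on every section by Lemma~\ref{l-car}, while those in $GF/G\cong F$ are central) shows that $F$ is in fact a system normalizer, and hence normalizes a full Sylow system --- this gives existence in one stroke. For uniqueness, if $P$ and $P^g$ are both $F$-invariant then $F$ and $F^{g^{-1}}$ are Carter subgroups of $N_{GF}(P)$, hence conjugate there, and $N_G(F)=C_G(F)=1$ forces $g\in N_G(P)$. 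The passage to $H$-invariance is the same as yours.

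Your argument instead inducts on $|G|$, uses Schur--Zassenhaus to reduce to the critical configuration $G=M\rtimes P$ with $M$ a minimal $F$-invariant normal $q$-subgroup and $O_p(G)=1$, then identifies $\mathrm{Syl}_p(G)$ with the affine $\mathbb{F}_q$-space $M$ and locates the unique $F_{q'}$-fixed point by coprime cohomology vanishing together with $C_M(F_{q'})=0$. This bypasses the Carter/system-normalizer machinery entirely, relying only on Schur--Zassenhaus and the vanishing of $H^1$ in coprime characteristic, at the price of a longer case analysis. The paper's proof is shorter and exposes the structural reason (Carter subgroup $=$ system normalizer once $C_G(F)=1$); yours is more self-contained for a reader not conversant with P.~Hall's theory of system normalizers, and it isolates exactly where the non-coprime difficulty sits --- in the $F_q$-part of the affine action --- resolving it with the same ``uniqueness forces invariance'' trick you used for $H$.
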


\begin{proof}
Recall that $G$ is solvable, and so is $GF$. Since $F$ is a Carter
subgroup of $GF$, it contains a system normalizer of $G$. By
P.~Hall's theorem \cite[Theorem~9.2.6]{robinson}, a system normalizer covers all central factors
of any chief series of $GF$. Since $F$ is nilpotent, it follows
that $F$ is a system normalizer. Furthermore, $F$ normalizes a
unique Sylow $p$-subgroup. Indeed, if $P$ and $P^g$ for $g\in G$
are two Sylow $p$-subgroups normalized by $F$, then $P$ is
normalized by $F$ and $F^{g^{-1}}$. Then $F$ and $F^{g^{-1}}$ are
Carter subgroups of $N_G(P)$ and $F=F^{g^{-1}n}$ for some
$n\in N_G(P)$, whence $g^{-1}n=1$, as $N_G(F)=C_G(F)=1$. Thus, $P^g=P^n=P$.
Since $F$ is normal in $FH$, the uniqueness of $P$ implies that it is also
$H$-invariant.
\end{proof}

We now establish the connection between the order, rank, and nilpotency of $G$ and $C_G(H)$ for
a finite group $G$ admitting a Frobenius group of automorphisms $FH$ with fixed-point-free kernel
$F$. (By the rank we mean the minimum number $r$ such that every subgroup can be generated by $r$ elements.)

\begin{theorem} \label{t-orn}
Suppose that a finite group $G$ admits a
Frobenius group of automorphisms $FH$ with kernel
$F$ and complement $H$ such that $C_G(F)=1$. Then

{\rm (a)} $|G|=|C_G(H)|^{|H|}$;

{\rm (b)} the rank of $G$ is
bounded in terms of $|H|$ and the rank of $C_G(H)$;

{\rm (c)} if $C_G(H)$ is nilpotent, then $G$ is nilpotent.
\end{theorem}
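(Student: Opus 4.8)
The plan is to deduce all three statements from a single unrefinable $FH$-invariant normal series
$$G=G_1>G_2>\dots >G_{k+1}=1 ,$$
combining two facts established above: each factor $S_i=G_i/G_{i+1}$ is a free ${\Bbb F}_{p_i}H$-module (Lemma~\ref{l-free}); and, applying Theorem~\ref{t-fp} to the pair $G_i\trianglerighteq G_{i+1}$ (legitimate since $C_{G_{i+1}}(F)\le C_G(F)=1$), the natural map $C_{G_i}(H)\to C_{S_i}(H)$ is onto with kernel $C_{G_i}(H)\cap G_{i+1}=C_{G_{i+1}}(H)$. Parts (b) and (c) will use in addition the unique $FH$-invariant Sylow subgroups of Lemma~\ref{sylow} and the generation statement $G=\langle C_G(H)^f\mid f\in F\rangle$ of Lemma~\ref{l-gen}; recall also that $G$ is solvable by Theorem~\ref{razr}.

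For (a): writing the free module $S_i$ as $\bigoplus_{h\in H}T_ih$ as in the proof of Lemma~\ref{l-gen}, one has $C_{S_i}(H)=\{\,\sum_{h\in H}th\mid t\in T_i\,\}$, of order $|T_i|=|S_i|^{1/|H|}$; thus $|S_i|=|C_{S_i}(H)|^{|H|}$. Telescoping the identity $|C_{G_i}(H)|=|C_{S_i}(H)|\cdot|C_{G_{i+1}}(H)|$ down the series (using $C_{G_{k+1}}(H)=1$) gives $|C_G(H)|=\prod_i|C_{S_i}(H)|$, whence $|G|=\prod_i|S_i|=\prod_i|C_{S_i}(H)|^{|H|}=|C_G(H)|^{|H|}$.

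For (b): let $P$ be the unique $FH$-invariant Sylow $p$-subgroup of $G$ (Lemma~\ref{sylow}). Since $P$ itself satisfies the hypotheses of the theorem, part (a) applied to $P$ gives $|P|=|C_P(H)|^{|H|}$; comparing with $|G|=|C_G(H)|^{|H|}$ forces $|C_P(H)|=|C_G(H)|_p$, so $C_P(H)$ is a Sylow $p$-subgroup of $C_G(H)$ and hence ${\rm rk}(C_P(H))\le{\rm rk}(C_G(H))$. By the classical fact that the rank of a finite solvable group exceeds the maximum of the ranks of its Sylow subgroups by at most one, it is enough to bound ${\rm rk}(P)$ in terms of $|H|$ and ${\rm rk}(C_P(H))$. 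Passing, as in the proof of Lemma~\ref{l-free}, to the coprime situation by replacing $F$ with $F_{p'}$ (note $C_P(F_{p'})=1$), every elementary abelian $FH$-invariant section of $P$ is a free ${\Bbb F}_pH$-module whose $H$-fixed points are images of $C_P(H)$ (Theorem~\ref{t-fp}), and so has rank bounded in terms of $|H|$ and ${\rm rk}(C_P(H))$; in particular $d(P)\le|H|\cdot{\rm rk}(C_P(H))$ reading off the free module $P/\Phi(P)$, where $d(P)$ is the minimal number of generators. The step I expect to be the main obstacle is upgrading this control of the sections of $P$ to a bound on ${\rm rk}(P)$ itself, not merely on $d(P)$: I would do this by feeding the free-module information into known estimates for the rank of a $p$-group carrying a coprime operator group, applied along the chain $P\supseteq\Phi(P)\supseteq\Phi(\Phi(P))\supseteq\cdots$ of Frattini subgroups, each again $FH$-invariant with fixed-point-free $F$-action by Lemma~\ref{l-car}.

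For (c): I would show, by induction on $|G|$, that $G$ is nilpotent. Let $V$ be a minimal nontrivial $FH$-invariant normal subgroup of $G$; it is an elementary abelian $q$-group (as $G$ is solvable) and, since $C_V(F)=1$, a free ${\Bbb F}_qH$-module by the argument of Lemma~\ref{l-free}. By Theorem~\ref{t-fp}, $C_{G/V}(H)=C_G(H)V/V$ is a homomorphic image of $C_G(H)$ and hence nilpotent, so $G/V$ is nilpotent by the inductive hypothesis. It therefore suffices to prove $V\le Z(G)$, for then $G$ is a central extension of a nilpotent group and so is nilpotent. Write $G/V=\bar P_q\times\bar K$, with $\bar P_q$ the Sylow $q$-subgroup and $\bar K$ the Hall $q'$-subgroup, and let $K\trianglelefteq G$ be the ($FH$-invariant) preimage of $\bar K$. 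Since every $q$-element of $K$ lies in $V$, the nilpotent subgroup $C_K(H)\le C_G(H)$ decomposes as $C_V(H)\times Y$ with $Y$ a $q'$-group centralizing $C_V(H)$; the crucial claim is that $Y$ centralizes the whole of $V$. Granting it, $C_K(H)$ centralizes $V$, hence so does every conjugate $C_K(H)^f$ (because $V^f=V$), and therefore $K=\langle C_K(H)^f\mid f\in F\rangle$ centralizes $V$ by Lemma~\ref{l-gen}; then $G/C_G(V)$ is a $q$-group acting on the $q$-group $V$, so $C_V(G)\ne1$, and being $FH$-invariant and normal in $G$ it equals $V$ by minimality, giving $V\le Z(G)$. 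I expect the claim that $Y$ centralizes all of $V$ to be the delicate point: $FH$-compatibility alone — namely ${\Bbb F}_qH$-linearity of the $Y$-action together with $Y$ fixing the ``diagonal'' subspace $C_V(H)$ — does not suffice, and one must also invoke the $F$-action and the dichotomy $[V,K]\in\{1,V\}$ forced by minimality of $V$.
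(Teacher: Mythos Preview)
Part~(a) is correct and matches the paper's argument.

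For~(b), the step you flag as the obstacle is a real gap, and your Frattini-chain idea does not close it: bounding $d(\Phi^i(P))$ for each $i$ controls only particular characteristic sections, whereas ${\rm rk}(P)$ concerns arbitrary subgroups, and you have not identified a result converting one into the other. The paper's route is to pass to a Thompson critical subgroup $U$ of $P$ --- a characteristic subgroup of class at most~$2$ with $C_P(U)\le U$. Since the rank of a $p$-group of automorphisms of $U$ is bounded in terms of ${\rm rk}(U)$, this bounds ${\rm rk}(P)$ in terms of ${\rm rk}(U)$; and because $U$ has class at most~$2$, ${\rm rk}(U)$ is in turn bounded in terms of ${\rm rk}(U/\Phi(U))$, which (being a free ${\Bbb F}_pH$-module with $H$-fixed points covered by $C_G(H)$ via Theorem~\ref{t-fp}) is at most $|H|\cdot{\rm rk}(C_G(H))$.

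For~(c), your reduction is sound, but the ``crucial claim'' that $Y$ centralizes all of $V$ is the entire substance of the proof, and appealing to ``the $F$-action and the dichotomy $[V,K]\in\{1,V\}$'' is far too vague to supply it. The paper, after a sharper reduction to $G=VU$ with $U$ elementary abelian acting faithfully on $V$ and $C_V(U)=1$, proves the analogous statement that $C_U(H)$ acts trivially on $V$ by a genuine Clifford-theoretic argument: extend scalars to a splitting field, decompose an irreducible $UFH$-constituent $W$ of $V$ into its Wedderburn $U$-components (on each of which $U$ acts by scalars), and analyse the transitive permutation action of $FH$ on these components. A separate lemma, exploiting the Frobenius structure of $FH$, shows that at most one $H$-orbit on the set of components is non-regular. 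On a regular $H$-orbit, $C_U(H)$ is $H$-stable and hence acts by the \emph{same} scalar on every component in the orbit, while also fixing the diagonal $H$-fixed subspace; together these force $C_U(H)$ to act trivially there. The single possible exceptional orbit is then handled using the freeness of $V$ as an ${\Bbb F}_pH$-module. This orbit analysis is the missing ingredient in your~(c).
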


\begin{proof}
(a) It is sufficient to prove this equality for each factor
$S=G_i/G_{i+1}$ of the series \eqref{riad}, since $|C_G(H)|=\prod _i|C_{G_i/G_{i+1}}(H)|$
by Theorem~\ref{t-fp}. By Lemma~\ref{l-free}, $S$ is a free
${\Bbb F}_pH$-module, which means that $S=\bigoplus _{h\in H}Th$ for some ${\Bbb F}_pH$-sub\-mo\-dule~$T$.
Therefore, $C_S(H)=\{\sum _{h\in H}th\mid t\in T\}$ and $|C_S(H)|=|T|$, whence $|S|=|T|^{|H|}$.

(b) It is known that the rank of a finite (solvable) group $G$ is
bounded in terms of the maximum rank of its Sylow subgroups
\cite{kov68}. Let $P$ be an $FH$-invariant Sylow $p$-subgroup of $G$ given by Lemma~\ref{sylow}.
It is known that the rank of a $p$-group of automorphisms of a
finite $p$-group $U$ is bounded in terms of the rank of $U$. Let
$U$ be a Thompson critical subgroup of $P$; recall that $U$ is a
characteristic subgroup of nilpotency class at most 2 containing its
centralizer in $P$ (see, for example, \cite[Theorem~5.3.11]{gor}).
Thus the rank of $P$ is bounded in terms of the
rank of $U$. In turn, since $U$ is nilpotent of class at most 2, the rank
of $U$ is bounded in terms of the rank of $S=U/\Phi (U)$. The
group $S$ can be regarded as an ${\mathbb F}_pFH$-module, which is a
free ${\mathbb F}_pH$-module by a repeated application of
Lemma~\ref{l-free} to an unrefinable series of ${\mathbb
F}_pFH$-sub\-mo\-dules of $U/\Phi (U)$. By the same argument as in the
proof of (a) above, the rank of $S$ is equal to $|H|$ times the rank of
$C_S(H)$. By Theorem~\ref{t-fp}, $C_S(H)$ is covered by
$C_G(H)$; as a result, the rank of $S$ is at most $|H|$ times the
rank of $C_G(H)$.

(c) First we make a simple remark that in any action of the
Frobenius group $FH$ with non-trivial action of $F$ the complement
$H$ acts faithfully. Indeed, the kernel $K$ that does not contain
$F$ must intersect $H$ trivially: $K\cap H$ acts
trivially on $F/(K\cap F)\ne 1$ and therefore has non-trivial fixed
points on $F$, as the action is coprime.

Suppose that $C_G(H)$ is nilpotent.
We prove that then $G$ is nilpotent by contradiction, considering a counterexample $GFH$
of minimal possible order. Recall that $G$
is solvable. Suppose that $G$ is not nilpotent; then it is easy to find an $FH$-invariant
section $VU$ of $G$ such that $V$ and
$U$ are elementary abelian groups of coprime orders, $V$ is normal
in $VUFH$, and $U$ acts faithfully on $V$ with $C_V(U)=1$. Note
that $C_{VU}(F)=1$ by Lemma~\ref{l-car}. In particular, $F$ acts non-trivially on $VU$
and therefore $H$ acts faithfully on $VU$. Since $C_{VU}(H)$ is covered by $C_G(H)$ by
Theorem~\ref{t-fp}, it follows that $C_{VU}(H)$ is nilpotent.
Thus, we can replace $G$ by $UV$, and $F$ by its image in its action on $UV$, so
by the minimality of our counterexample we must actually have $G=VU$.

Also by
Theorem~\ref{t-fp}, $C_{VU}(H)=C_V(H)C_U(H)$.
Furthermore, since $U$ and $V$ have coprime orders, the nilpotency of
$C_{VU}(H)$ implies that it is abelian; in other words, $C_U(H)$ centralizes $C_{V}(H)$.
Note that $C_U(H)\ne 1$ by Lemma~\ref{l-gen} (or \ref{l-free}).

Let $V$ be an elementary $p$-group; we can regard $V$ as an
${\Bbb F}_pUFH$-module.
Note that ${V}$ is a free ${\Bbb F}_pH$-module by Lemma~\ref{l-free}.
We extend the ground field ${\Bbb F}_p$ to a finite field
$\overline{{\Bbb F}}_p$ that is a splitting field for $UFH$ and obtain an $\overline{{\Bbb
F}}_pUFH$-module $\widetilde V=V\otimes _{{\Bbb F}_p}\overline{{\Bbb
F}}_p$. Many of the above-mentioned properties of $V$ are inherited by ${\widetilde{V}}$:
\begin{enumerate}\itemsep-0.5ex

\item[(V1)] \ $\widetilde{V}$ is a faithful $\overline{{\Bbb F}}_pU$-module;

\item[(V2)] \ $C_{\widetilde{V}}(U)=0$;

\item[(V3)] \ $C_U(H)$ acts trivially on $C_{\widetilde{V}}(H)$;

\item[(V4)] \ $C_{\widetilde{V}}(F)=0$;

\item[(V5)] \ $\widetilde{V}$ is a free $\overline{{\Bbb F}}_pH$-module.

\end{enumerate}

Consider an unrefinable series of $\overline{{\Bbb
F}}_pUFH$-sub\-mo\-dules
\begin{equation}\label{riad2}
\widetilde{V}=V_1 >V_2>\dots >V_k> V_{k+1}=0.
\end{equation}
Let $W$ be one of the factors of this series; it is a non-trivial
irreducible $\overline{{\Bbb F}}_pUFH$-sub\-mo\-dule. Note that $C_W(F)=0$
by Lemma~\ref{l-car}, as we can still regard $\widetilde V$ as a finite (additive)
group on which $F$ acts fixed-point-freely by property~(V4).
For the same reason,
$W$ is a free ${ \overline {\Bbb
F}}_pH$-module by Lemma~\ref{l-free}.

Let $$W=W_1\oplus \dots \oplus W_t$$ be the decomposition of $W$
into the direct sum of Wedderburn components $W_i$ with respect to
$U$. On each of the $W_i$ the group $U$ is represented by scalar multiplications.
We consider the transitive action of $FH$ on the set
$\Omega=\{W_1,\dots ,W_t\}$.

\begin{lemma}\label{reg-orb}
All orbits of
$H$ on $\Omega$, except for possibly one, are regular (that is, of length~$|H|$).
\end{lemma}

\begin{proof}
First note that $H$ transitively permutes the $F$-orbits on $\Omega$. Let
$\Omega _1$ be one of these $F$-orbits and let $H_1$ be the
stabilizer of $\Omega _1$ in $H$. If $H_1=1$, then all the
$H$-orbits are regular, so we assume that $H_1\ne 1$.
We claim that $H_1$ has exactly one
non-regular orbit on~$\Omega _1$ (actually, a fixed point).

Let $\bar F$ be the image of $F$
in its action on $\Omega _1$. If $\bar F=1$, then $\Omega
_1$ consists of a single Wedderburn component, on which
$U$ acts by scalar multiplications, and acts non-trivially by
property~(V2). Then $F$ acts trivially on the non-trivial quotient
of $U$ by the corresponding kernel, which contradicts Lemma~\ref{l-car}. Thus, $\bar F\ne 1$, and
by the remark at the beginning of the proof, $\bar{F}H_1$ is  a
Frobenius group with complement~$H_1$.

Let $S$ be the stabilizer of a point in $\Omega _1$ in
$\bar{F}H_1$. Since $|\Omega _1|=|\bar F:\bar F\cap S|=|\bar F H_1:S|$ and the
orders  $|\bar F|$ and $|H_1|$ are coprime, $S$ contains a conjugate of
$H_1$; without loss of generality we assume that $H_1\leq S$. Any
other stabilizer of a point is equal to $S^f$ for $f\in \bar F\setminus
S$. We claim that $S^f\cap H_1=1$, which is the same as $S\cap
H_1^{f^{-1}}=1$. But all the conjugates $H_1^x$ for $x\in \bar F$ are
distinct and disjoint and their union contains all the elements of $\bar FH_1$ of
orders dividing $|H_1|$; the same is true for the conjugates of $H_1$ is $S$.
Therefore the only conjugates of $H_1$
intersecting $S$ are $H_1^s$ for $s\in S$.

The $H$-orbits of elements of regular $H_1$-orbits are regular
$H$-orbits. Thus there is one non-regular $H$-orbit on
$\Omega$  --- the $H$-orbit of the fixed point of $H_1$ on $\Omega _1$.
\end{proof}

Consider any regular $H$-orbit on $\Omega$, which we temporarily
denote by $\{W_h \mid h\in H\}$. Let $X=\bigoplus _{h\in H} W_h$. Then,
as before, $C_X(H)=\{\sum _{h\in H}xh\mid x\in W_1\}$. Since $C_U(H)$, as a
subgroup of $U$, acts on each $W_h$ by scalar multiplications and
centralizes $C_X(H)$ by property~(V3), we obtain that, in fact, $C_U(H)$ must act trivially on $X$.

The sum of the $W_i$ over all regular $H$-orbits is obviously a
free $\overline{{\Bbb F}}_pH$-module. Since $\widetilde V$ is also
a free $\overline{{\Bbb F}}_pH$-module by property~(V5), the sum $Y$ of the $W_i$
over the only one, by Lemma~\ref{reg-orb}, possibly remaining non-regular $H$-orbit must also be a free
$\overline{{\Bbb F}}_pH$-module. Let $Y=\bigoplus _{h\in H} Zh$
for some $\overline{{\Bbb F}}_pH$-sub\-mo\-dule $Z$. Then, as before,
$C_Y(H)=\{\sum _{h\in H}yh\mid y\in Z\}$. The subgroup $C_U(H)$
acts on each $W_i$ by scalar multiplications. Since $Y$ is the sum
over an $H$-orbit and $H$ centralizes $C_U(H)$, all the $W_i$ in the $H$-orbit
are isomorphic $\overline{{\Bbb F}}_pC_U(H)$-modules. Hence $C_U(H)$
acts by scalar multiplications on the whole $Y$. Since $C_U(H)$
centralizes $C_Y(H)$ by property~(V3), it follows that, in fact, $C_U(H)$
must act trivially on $Y$.

As a result, $C_U(H)$ acts trivially on $W$. Since this is true
for every factor of \eqref{riad2} and the order of $U$ is coprime
to the characteristic $p$ (or to the order of $\widetilde V$), it
follows that $C_U(H)$ acts trivially on $\widetilde V$, contrary to
property~(V1).
This contradiction completes the proof.
\end{proof}

\section{Bounding the exponent}

Here we bound the exponent of a group with a metacyclic Frobenius
group of automorphisms. But first we develop the requisite Lie
ring technique. The following general definitions
are also used in subsequent sections.

The Lie subring (or subalgebra)
generated by a subset~$U$ is denoted by $\left<U\right>$, and the ideal generated by~$U$ by ${}_{\rm
id}\!\left<U\right>$. Products in a Lie ring are called commutators. We use the term ``span''
both for the subspace (in the case of algebras) and for the additive subgroup generated by a
given subset. For subsets $X,Y$ we denote
by $[X,Y]$ the span of all commutators $[x,y]$, $x\in X$, $y\in Y$;
this is an ideal if $X,Y$ are ideals. Terms of the derived series of a Lie ring $L$ are defined as $\;
L^{(0)}=L;$ \ $L^{(k+1)}=[L^{(k)},\, L^{(k)}].$ Then $L$ is
solvable of derived length at most $n$ (sometimes called
``solvability index'') if $L^{(n)}=0$. Terms of the lower central
series of $L$ are defined as $\gamma _1(L)=L;$ \ $\gamma
_{k+1}(L)=[\gamma _k(L),\,L]$. Then $L$ is nilpotent of class at most $c$
(sometimes called ``nilpotency index'') if $\gamma _{c+1}(L)=0$.
We use the standard notation for simple (left-normed) commutators:
$[x_1,x_2,\dots , x_k]=[...[[x_1,x_2],x_3],\dots , x_k]$ (here the $x_i$ may be elements or subsets).

We use several times  the following fact, which helps in proving
nilpotency of a solvable Lie ring $L$: : let $K$ be an ideal of a
Lie ring $L$;
\begin{equation}\label{chao}
\text{if}\;\, \g _{c+1}(L)\subseteq [K,K]\;\,\text{and}\;\,
\g _{k+1}(K)=0,\;\, \text{then}\;\,\g _{c{k+1 \choose 2} -{k \choose 2}+1}(L)=0.\end{equation}
This can be regarded as a Lie ring analogue of P.~Hall's theorem \cite{hall} (with a simpler `linear' proof;
see also \cite{stewart} for the best possible bound for the nilpotency class of $L$).

\medskip
Let $A$ be an additively written abelian group. A Lie ring $L$ is
\textit{$A$-graded} if
$$L=\bigoplus_{a\in A}L_a\qquad \text{ and }\qquad[L_a,L_b]\subseteq L_{a+b},\quad a,b\in A,$$
where the $L_a$ are subgroups of the additive group of $L$.
Elements of the grading components $L_a$ are called \textit{homogeneous}, and commutators in homogeneous elements
\textit{homogeneous commutators}. An additive subgroup $H$ of $L$ is
called \textit{homogeneous} if $H=\bigoplus_a (H\cap L_a)$; we then
write $H_a=H\cap L_a$. Clearly, any subring or ideal generated by
homogeneous additive subgroups is homogeneous. A homogeneous
subring and the quotient by a homogeneous ideal can be regarded as
$A$-graded Lie rings with the induced gradings. Also, it is not
difficult to see that if $H$ is homogeneous, then so is $C_L(H)$, the centralizer of $H$,
which is, as usual, equal to the set $\{l\in L\mid
[l,h]=0 \text{ for all } h\in H\}$.

An element $y$ of a Lie algebra $L$ is called
\textit{ad-nilpotent} if there exists a positive integer $n$ such
that $({\rm ad}\, y)^n=0$, that is,
$[x,\underbrace{y,\dots,y}_{n}]=0$ for all $x\in L$. If $n$ is the
least integer with this property, then we say that $y$ is
\textit{ad-nilpotent of index}~$n$.

\medskip
Throughout the rest of this section $p$ will denote an arbitrary but fixed prime.
Let $G$ be a group.
We set
$$D_i=D_i(G)=\prod\limits_{jp^k\geq i}\gamma_j(G)^{p^k}.$$
The
subgroups $D_i$
form the \textit{Jennings--Zassenhaus filtration}
$$G=D_1\geq D_2\geq\cdots $$
of the group $G$.
This series satisfies the inclusions
$[D_i,D_j]\leq D_{i+j}$ and $D_i^p\leq D_{pi}$ for all $i,j$.
These properties make it possible to construct a Lie algebra $DL(G)$ over $\mathbb F_p$,
the field with $p$ elements. Namely, consider the quotients $D_i/D_{i+1}$ 
as linear spaces over
$\mathbb F_p$, and let $DL(G)$ be the direct
sum of these spaces. Commutation in $G$ induces a binary
operation $[\, ,]$ in $DL(G)$. For homogeneous elements
$xD_{i+1}\in D_i/D_{i+1}$ and $yD_{j+1}\in D_j/D_{j+1}$
the operation is defined by
$$[xD_{i+1},yD_{j+1}]=[x,y]D_{i+j+1}\in D_{i+j}/D_{i+j+1}$$
and extended to arbitrary
elements of $DL(G)$ by linearity. It is easy
to check that the operation is
well-defined and that $DL(G)$ with the operations $+$ and $[\, ,]$
is a Lie algebra over $\mathbb F_p$, which is $\mathbb Z$-graded with the $D_i/D_{i+1}$
being the grading components.

For any $x\in D_i\setminus D_{i+1}$ let $\bar x$ denote the element
$xD_{i+1}$ of $DL(G)$.

\begin{lemma}[Lazard \cite{la}]\label{3.6}
For any $x\in G$ we have
$({\rm ad}\, \bar x)^p={\rm ad}\, \overline{x^p}$. Consequently, if $x$ is of finite order
$p^t$, then $\bar x$ is ad-nilpotent of index at most $p^t$.
\end{lemma}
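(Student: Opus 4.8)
The plan is to realise $DL(G)$ as a graded Lie subalgebra of the associated graded ring of the augmentation ideal filtration of $\mathbb{F}_pG$, in which passing to $p$-th powers of inner maps is controlled by a one-line identity valid in characteristic $p$. Let $\Delta=\Delta(\mathbb{F}_pG)$ be the augmentation ideal and recall the Jennings--Zassenhaus theorem identifying the subgroups $D_i=D_i(G)$ with the mod-$p$ dimension subgroups: $D_i=G\cap(1+\Delta^i)$ for every $i$. Form the associated graded associative $\mathbb{F}_p$-algebra $R=\bigoplus_{i\geq 1}\Delta^i/\Delta^{i+1}$, and for $x\in D_i\setminus D_{i+1}$ (so $x-1\in\Delta^i\setminus\Delta^{i+1}$ by the identification) put $\varphi(\bar x)=(x-1)+\Delta^{i+1}\in\Delta^i/\Delta^{i+1}$; extend $\varphi$ additively over each $D_i/D_{i+1}$ and then over the direct sum to a map $\varphi\colon DL(G)\to R$.

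First I would check that $\varphi$ is a well-defined injective homomorphism of graded Lie algebras, $R$ carrying its commutator bracket. Well-definedness and injectivity on the component $D_i/D_{i+1}$ follow at once from $D_{i+1}=G\cap(1+\Delta^{i+1})$, since $xD_{i+1}=yD_{i+1}$ iff $xy^{-1}-1\in\Delta^{i+1}$ iff $x-y\in\Delta^{i+1}$. Additivity holds because $(x-1)(y-1)\in\Delta^{2i}\subseteq\Delta^{i+1}$, so $(xy-1)\equiv(x-1)+(y-1)\pmod{\Delta^{i+1}}$. Compatibility with brackets reduces to the standard congruence $[x,y]-1\equiv(x-1)(y-1)-(y-1)(x-1)\pmod{\Delta^{i+j+1}}$ for $x\in D_i$, $y\in D_j$, which one gets by writing $[x,y]=(yx)^{-1}(xy)$, noting $xy-yx=(x-1)(y-1)-(y-1)(x-1)$ and $(yx)^{-1}-1\in\Delta$. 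Thus $\varphi$ identifies $DL(G)$ with a graded Lie subalgebra of $R$ and intertwines adjoint maps: $\varphi\bigl(({\rm ad}\,\bar x)(\bar y)\bigr)=[\varphi(\bar y),\varphi(\bar x)]=({\rm ad}\,\varphi(\bar x))(\varphi(\bar y))$, under the paper's convention ${\rm ad}\,a\colon b\mapsto[b,a]$.

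The main step is then the characteristic-$p$ computation inside the associative algebra $R$. For $a\in R$ let $L_a$ and $R_a$ denote left and right multiplication by $a$; these commute and ${\rm ad}\,a=R_a-L_a$, so, $R$ being an $\mathbb{F}_p$-algebra, $({\rm ad}\,a)^p=(R_a-L_a)^p=R_a^p-L_a^p=R_{a^p}-L_{a^p}={\rm ad}\,(a^p)$. Apply this with $a=\varphi(\bar x)=(x-1)+\Delta^{i+1}$. Since $-1$ is central in $\mathbb{F}_pG$ and the characteristic is $p$, we have $(x-1)^p=x^p-1$, hence $a^p=(x^p-1)+\Delta^{pi+1}=\varphi(\overline{x^p})$, where $\overline{x^p}$ denotes the image of $x^p$ in $D_{pi}/D_{pi+1}$ (this is well defined as $D_i^p\leq D_{pi}$, and it vanishes exactly when $x^p\in D_{pi+1}$). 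Therefore $\varphi\bigl(({\rm ad}\,\bar x)^p(\bar y)\bigr)={\rm ad}\,(a^p)(\varphi(\bar y))=\varphi\bigl(({\rm ad}\,\overline{x^p})(\bar y)\bigr)$ for every homogeneous $\bar y$, and injectivity of $\varphi$ together with linearity gives $({\rm ad}\,\bar x)^p={\rm ad}\,\overline{x^p}$. Iterating, $({\rm ad}\,\bar x)^{p^k}={\rm ad}\,\overline{x^{p^k}}$ for all $k\geq 1$; if $x$ has order $p^t$, then $x^{p^t}=1$, so $\overline{x^{p^t}}=0$ and $({\rm ad}\,\bar x)^{p^t}=0$, i.e.\ $\bar x$ is ad-nilpotent of index at most $p^t$.

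The substance of the argument is entirely in setting up $\varphi$: once the Jennings--Zassenhaus identification $D_i=G\cap(1+\Delta^i)$ is in hand, well-definedness, injectivity and bracket-compatibility are routine, and the remaining step is just the formal identity $(R_a-L_a)^p=R_{a^p}-L_{a^p}$ in characteristic $p$. The one technical nuisance to flag is the degree bookkeeping in the conclusion: when $x^p$ happens to lie in $D_{pi+1}$, the symbol $\overline{x^p}$ in the identity $({\rm ad}\,\bar x)^p={\rm ad}\,\overline{x^p}$ has to be read as the image of $x^p$ in $D_{pi}/D_{pi+1}$ rather than in the possibly deeper quotient in which $x^p$ first becomes nonzero, so that the two sides live in the same grading degree; this is the reading under which the lemma is applied later.
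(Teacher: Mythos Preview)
The paper does not supply its own proof of this lemma: it is stated as a citation to Lazard and used as a black box. Your argument is correct and is essentially Lazard's original proof, embedding $DL(G)$ into the associated graded of the augmentation filtration on $\mathbb{F}_pG$ via $x\mapsto (x-1)$ and then using the commuting of left and right multiplications to get $({\rm ad}\,a)^p={\rm ad}\,a^p$ in characteristic~$p$. The verifications of well-definedness, injectivity, additivity, and bracket compatibility are all fine, and your closing remark about reading $\overline{x^p}$ in degree $pi$ (even when $x^p$ falls deeper) is exactly the convention under which the identity is meant and applied.
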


 Let
$L_p(G)=\langle D_1/D_2\rangle$ be the subalgebra
of $DL(G)$ generated by $D_1/D_2$; 
it is also $\mathbb Z$-graded with grading components $ L_i=L_p(G)\cap D_i/D_{i+1}$. 
 The following lemma
goes back to Lazard \cite{laz65}; in the present form it can be found, for example,
 in \cite{khushu}.

\begin{lemma}\label{4.9}
Suppose that $X$ is a
$d$-generator finite $p$-group such that the Lie
algebra $L_p(X)$ is nilpotent of class $c$. Then
$X$ has a powerful characteristic subgroup of
$(p,c,d)$-bounded index.
\end{lemma}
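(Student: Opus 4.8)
The plan is to pass from the group $X$ to its Lie algebra $L_p(X)$, exploit nilpotency of $L_p(X)$ together with Lazard's correspondence to obtain a bound on a relevant filtration, and then translate this back to a powerful subgroup using the standard machinery on powerful $p$-groups. First I would recall that if $X$ is generated by $d$ elements, then $L_p(X)$ is generated as a Lie algebra by the $d$-dimensional space $D_1/D_2$; since $L_p(X)$ is nilpotent of class $c$, it is spanned by left-normed commutators of weight at most $c$ in these $d$ generators, hence $\dim_{\mathbb F_p} L_p(X)$ is $(c,d)$-bounded. Consequently $D_i=D_{i+1}$ for $i$ larger than some $(c,d)$-bounded integer $N$: indeed $D_i/D_{i+1}$ is contained in the degree-$i$ component of $DL(X)$, and one checks (using that $DL(X)$ is generated in degree $1$ as a restricted Lie algebra, via Lemma~\ref{3.6}) that once $L_p(X)$ vanishes in degree $i$, the whole of $D_i/D_{i+1}$ is accounted for by $p$-th powers coming from lower degrees, forcing the filtration to stabilise. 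The cleanest way to phrase this is that $D_{N+1}$ has no top: since $\bigcap_i D_i=1$ for a finite $p$-group, $D_{N+1}=1$, so the Jennings--Zassenhaus series of $X$ has $(c,d)$-bounded length.

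Next I would use the length bound to produce the powerful subgroup. The standard fact (Lazard; see the Jennings--Zassenhaus theory as in Dixon--du~Sautoy--Mann--Segal) is that $D_{i}=D_i(X)$ coincides, for small $i$, with the terms of a fast-descending series, and in particular $|X|$ is $(c,d)$-bounded once the series has $(c,d)$-bounded length — but we want a powerful subgroup, not a bound on $|X|$ itself, since $X$ need not be small. The right tool here is the theorem that a finite $p$-group $X$ with $d(X)=d$ and Jennings--Zassenhaus length (equivalently, nilpotency class of $L_p(X)$) equal to $c$ contains a powerful characteristic subgroup of index bounded in terms of $p$, $c$, $d$. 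I would give this the form: consider the subgroup $P=D_{p^m}$ for a suitable $(p,c)$-bounded $m$ chosen so that $p^m>$ the length $N$; then $P=1$, which is trivially powerful but useless. So instead the correct route, following the reference \cite{khushu}, is to observe that $X/D_{N+1}=X$ has a normal series of $(c,d)$-bounded length with elementary abelian factors of $(c,d)$-bounded rank, whence by a theorem on coclass-type invariants $X$ has a characteristic subgroup $Y$ of $(p,c,d)$-bounded index that is powerful. Concretely, one takes $Y$ to be generated by $p$-th powers of elements of a well-chosen term $D_k$ with $k$ in a $(p,c)$-bounded range, using $D_i^p\le D_{pi}$ and $[D_i,D_j]\le D_{i+j}$ to verify $[Y,Y]\le Y^p$.

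The main obstacle, and the step I would spend the most care on, is making precise the passage ``nilpotency class of $L_p(X)$ bounds the length of the Jennings--Zassenhaus filtration of $X$''. The subtlety is that $DL(X)$ is generally larger than $L_p(X)$ — it is generated by $D_1/D_2$ only as a \emph{restricted} Lie algebra (with $p$-operation induced by $x\mapsto x^p$, cf.\ Lemma~\ref{3.6}), not as an ordinary one — so a priori $D_i/D_{i+1}$ could be nonzero in arbitrarily high degree $i$ even when $L_p(X)$ is nilpotent. The resolution is that each $D_i/D_{i+1}$ is spanned by images of $p$-th powers $\overline{y^{p}}=(\mathrm{ad}\,\bar y)^{p-1}$-type iterates of homogeneous elements of $L_p(X)$ together with $L_i$ itself; since $L_p(X)$ is finite-dimensional, only finitely many $p$-power ``shifts'' of finitely many elements occur, and a counting argument (bounding the degrees that can arise as $jp^k$ with $\gamma_j(X)\ne\gamma_{j+1}(X)$, using that $\gamma_j(X)/\gamma_{j+1}(X)$ injects into the degree-$j$ part of $L_p(X)$) gives the $(c,d)$-bound $N$ on the top nonzero degree. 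Once $N$ is in hand, the remaining work is routine manipulation with the congruences $[D_i,D_j]\le D_{i+j}$, $D_i^p\le D_{pi}$ to locate and verify the powerful characteristic subgroup, so I would keep that part brief and cite \cite{khushu}.
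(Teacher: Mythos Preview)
The paper does not give its own proof of this lemma; it simply cites \cite{khushu} (tracing the idea to Lazard \cite{laz65}). So there is no paper argument to compare against, only the question of whether your sketch is correct.

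Your proposal contains a genuine error. The central claim --- that the Jennings--Zassenhaus filtration of $X$ has $(c,d)$-bounded length, i.e.\ $D_{N+1}=1$ for a bounded $N$ --- is false. Take $X=\Z/p^n\Z$: here $d=1$ and $L_p(X)=D_1/D_2$ is one-dimensional, hence abelian, so $c=1$; yet $D_{p^k}=p^kX\ne 1$ for all $k<n$, so the filtration has length of order $p^{n-1}$, which is unbounded. You notice the resulting contradiction yourself (``we want a powerful subgroup, not a bound on $|X|$ itself, since $X$ need not be small'') but then continue to use $X/D_{N+1}=X$ without repairing the argument. The auxiliary claim that $\gamma_j(X)/\gamma_{j+1}(X)$ \emph{injects} into $L_j$ is also wrong: since $\gamma_{j+1}\leq D_{j+1}$, one has a \emph{surjection} $\gamma_j/\gamma_{j+1}\twoheadrightarrow \gamma_j D_{j+1}/D_{j+1}=L_j$, not an injection (same cyclic example: $\gamma_1/\gamma_2=X$ has order $p^n$, while $L_1$ has order~$p$).

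What is true, and what underlies the cited argument, is that although the filtration may be arbitrarily long, each factor $D_i/D_{i+1}$ has $(c,d)$-bounded $\F_p$-dimension: since $L_j=0$ for $j>c$, the contributions to $D_i/D_{i+1}$ come only from $p^k$-th powers of elements of the boundedly many nonzero $L_j$ with $j\leq c$. From this uniform bound on the sections one extracts a powerful characteristic subgroup of $(p,c,d)$-bounded index, either by a direct verification that a suitable term $D_m$ (with $m$ a $(p,c)$-bounded power of~$p$) is powerful, or via the Lubotzky--Mann machinery. Your last paragraph gestures in this direction but never disentangles it from the false claim on the length of the series; the ``counting argument bounding the degrees that can arise as $jp^k$'' cannot work, since for each fixed $j\leq c$ the exponent $k$ is unbounded.
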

Recall that powerful $p$-groups were introduced by
Lubotzky and Mann in \cite{lbmn}: a finite $p$-group
$G$ is \textit{powerful} if $G^p\geq [G,G]$ for
$p\ne 2$ (or $G^4\geq [G,G]$ for $p=2$). These groups
have many nice properties, so that often a problem becomes
much easier once it is reduced to the case of powerful
$p$-groups. The above lemma is quite useful as it allows
us to perform such a reduction. We will also require the following
lemma. Note that when a $\Bbb Z$-graded
Lie algebra $L$ is generated by the component $L_1$,
that is, $L=\langle L_1\rangle$, then in fact $L=L_1\oplus L_2\oplus \cdots$.

\begin{lemma}\label{lie}
Let $L=\bigoplus L_i$ be a $\Bbb Z$-graded
Lie algebra over a field such that $L=\langle L_1\rangle$ and assume that
every homogeneous component $L_i$ is spanned by elements that are
ad-nilpotent of index at most $r$. Suppose further that $L$ is solvable of
derived length $k$ and that $L_1$ has finite dimension $d$. Then $L$ is
nilpotent of $(d,r,k)$-bounded class.
\end{lemma}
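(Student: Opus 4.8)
The plan is to argue by induction on the derived length $k$, reducing at each step to understanding how $L$ acts on its last non-trivial derived subalgebra. First I would fix a homogeneous basis $e_1,\dots ,e_d$ of $L_1$ consisting of elements that are ad-nilpotent of index at most $r$; since $L=\langle L_1\rangle$ these $d$ elements generate $L$. It is convenient to observe that, because $L=\langle L_1\rangle$ is generated in degree $1$, a left-normed commutator in the $e_i$ of length $\ell$ is homogeneous of degree exactly $\ell$, so that $\g_{c+1}(L)=\bigoplus_{i>c}L_i$; hence $L$ is nilpotent of class $c$ precisely when $L_i=0$ for all $i>c$, and it suffices to bound the degrees in which $L$ is non-zero.

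The base of the induction ($k\le 2$) is the classical collecting argument: if $L$ is abelian there is nothing to do, while if $L$ is metabelian then $[x,a,b]=[x,b,a]$ for $x\in[L,L]$, so any commutator in the $e_i$ may be rewritten with its tail sorted as $[e_i,e_j,e_l,\dots ]$; if some $e_l$ occurs at least $r$ times in the tail, moving all its occurrences together annihilates the commutator because $(\operatorname{ad}e_l)^r=0$, and hence commutators of length exceeding $d(r-1)+2$ vanish. For the inductive step put $A=L^{(k-1)}$, an abelian homogeneous ideal. The quotient $L/A$ is $\mathbb Z$-graded, generated by its degree-$1$ component of dimension $\le d$, has every homogeneous component spanned by ad-nilpotent elements of index $\le r$ (images of such), and is solvable of derived length $\le k-1$; by induction it is nilpotent of $(d,r,k-1)$-bounded class $c_1$, so $\g_{c_1+1}(L)\subseteq A$. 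It is therefore enough to find a $(d,r,c_1)$-bounded $M$ with $[A,\underbrace{L,\dots ,L}_{M}]=0$, since then $\g_{c_1+M+1}(L)\subseteq[\g_{c_1+1}(L),\underbrace{L}_{M}]=0$. As $A$ is abelian, the action of $L$ on $A$ factors through the image $\mathfrak n$ of $L$ in $\operatorname{End}(A)$: this $\mathfrak n$ is a \emph{positively} $\mathbb Z$-graded Lie algebra generated by the $d$ degree-$1$ operators $\operatorname{ad}e_l|_A$, each nilpotent of index $\le r$, it is a homomorphic image of $L/A$ and hence nilpotent of class $\le c_1$, and by hypothesis each of its graded components is spanned by nilpotent operators of index $\le r$. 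The point is to show that the associative subalgebra $R=\langle\operatorname{ad}e_1|_A,\dots ,\operatorname{ad}e_d|_A\rangle$ of $\operatorname{End}(A)$ is nilpotent of $(d,r,c_1)$-bounded index $M$, for then $R^{M}=0$ and $[A,\underbrace{L}_{M}]=A\cdot R^{M}=0$. (Alternatively, the same fact can be packaged through \eqref{chao} once one knows $L^{(k-2)}$ is nilpotent of bounded class.)

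The hard part will be exactly this last claim about the associative algebra $R$. Being generated by a solvable Lie algebra of operators $R$ is PI, and being generated by nilpotent elements it is nil; the Razmyslov--Kemer--Braun theorem then gives nilpotency of $R$, but a priori with the index depending on $\dim A$. To obtain a bound depending only on $d,r,c_1$ I would use the positive grading of $R$ in an essential way — without it the claim is false in characteristic $p$ (for instance the Heisenberg algebra acting on $k[t]/(t^p)$ has two nilpotent generators of index $p$ whose bracket is the identity operator) — together with the hypothesis that each graded component of $\mathfrak n$ is spanned by index-$\le r$ ad-nilpotent elements. Concretely, I would run a PBW-type collecting process relative to a homogeneous basis of $\mathfrak n$ (which has $(d,c_1)$-bounded dimension, since $\mathfrak n$ is $d$-generated and nilpotent of class $\le c_1$), expressing $R$ as a span of ordered monomials in basis elements; the positive grading forces every basis element of positive degree to act as a nilpotent operator, and a Shirshov-height / collecting estimate — exploiting that these basis elements lie in graded components spanned by index-$\le r$ nilpotents — should bound the exponents that occur, hence the length of surviving monomials, hence the nilpotency index of $R$. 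Keeping all of these bounds explicit and independent of the characteristic is the genuine technical obstacle; the inductive reduction and the metabelian base case are routine by comparison.
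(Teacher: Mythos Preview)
Your inductive framework and the metabelian base case are fine, and the reduction of the inductive step to showing that the associative algebra $R=\langle \operatorname{ad}e_1|_A,\dots ,\operatorname{ad}e_d|_A\rangle\subseteq\operatorname{End}(A)$ is nilpotent of $(d,r,c_1)$-bounded index is also correct. The gap is precisely where you locate it: you do not actually prove this nilpotency. The appeals to Razmyslov--Kemer--Braun or Shirshov height are not going to give a bound independent of $\dim A$ (and $A$ is not yet known to be finite-dimensional --- that is what you are trying to prove), and the vague ``PBW-type collecting process'' is not an argument.

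The idea you are missing is elementary and, in fact, is just your own metabelian argument run once more with a second induction parameter. Let $j$ be the largest index with $L_j\not\subseteq C_L(A)$; since $L/A$ is nilpotent of class $c_1$, we have $j\le c_1$. Set $K=L_j+C_L(A)$. Because $[L_j,L_i]\subseteq L_{i+j}\subseteq C_L(A)$ for every $i\ge 1$, the subspace $K$ is an ideal and $K/C_L(A)$ is \emph{abelian}. Now $\dim\big(L_j/(L_j\cap C_L(A))\big)$ is $(d,r,k)$-bounded (it is at most $\dim L/A$), so $K=\operatorname{span}(a_1,\dots ,a_m)+C_L(A)$ for boundedly many $a_i\in L_j$ that can be chosen ad-nilpotent of index $\le r$. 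Exactly your metabelian collecting trick --- pigeonhole and permute, using $[a,x,y]=[a,y,x]$ for $a\in A$, $x,y\in K$ --- gives $[A,\underbrace{K,\dots ,K}_{s}]=0$ with $s=m(r-1)+1$. Hence $K$ is nilpotent of bounded class. Passing to $L/[K,K]$ drops the parameter $j$ (there $L_j$ centralizes the image of $A$), so backward induction on $j$ shows $L/[K,K]$ is nilpotent of bounded class; then \eqref{chao} finishes.

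In your operator language this is simply the observation that the top non-zero graded piece $\mathfrak n_j$ of $\mathfrak n$ is \emph{central} in $\mathfrak n$, hence is a commuting family of nilpotent operators of index $\le r$ spanned by boundedly many such; pigeonhole gives $\mathfrak n_j^{\,s}=0$ in $R$, and you can then strip off $\mathfrak n_j$ and iterate. No PI theory, no Shirshov height, no restriction on the characteristic is needed.
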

\begin{proof}
Without loss of generality we can assume that $k\geq 2$
and use induction on $k$. Let $M$ be the last non-trivial term of the
derived series of $L$. By induction we assume that $L/M$ is nilpotent of
$(d,r,k)$-bounded class. In particular, it follows that the dimension
of $L/M$ is finite and $(d,r,k)$-bounded. If $M\leq Z(L)$, we deduce
easily that $L$ is nilpotent of $(d,r,k)$-bounded class. So assume that
$M\not\leq Z(L)$. Let $j$ be the biggest index such that $L_j$ is not
contained in $C_L(M)$. Thus, $K=L_j+C_L(M)$ is a non-abelian ideal
in $L$. Since the dimension of $L/M$ is finite and $(d,r,k)$-bounded,
there exist boundedly many elements $a_1,\dots,a_m\in L_j$
such that $K=\langle a_1,\dots,a_m,C_L(M)\rangle$ and each of the
elements $a_1,\dots,a_m$ is ad-nilpotent of index at most $r$.
Taking into account that $j$ here is $(d,r,k)$-bounded, we can use
backward induction on $j$. Therefore, by induction $L/[K,K]$ is nilpotent
of $(d,r,k)$-bounded class.

Set $s=m(r-1)+1$ and consider the ideal $S=[M,K,\dots,K]$, where $K$ occurs
$s$ times. Then $S$ is spanned by commutators of the form
$[m,b_1,\dots,b_s]$ where $m\in M$ and $b_1,\dots,b_s$ are not necessarily
distinct elements from $\{a_1,\dots,a_m\}$. Since the number $s$ is big
enough, it is easy to see that there are $r$ indices $i_1,\dots,i_r$ such
that $b_{i_1}=\dots=b_{i_r}$. Taking into account that $K/C_L(M)$ is
abelian, we remark that $[m,x,y]=[m,y,x]$ for all $m\in M$ and $x,y\in K$.
Therefore we can assume without loss of generality that $b_1=\dots=b_r$. Since
$b_1$ is ad-nilpotent of index at most $r$, it follows that
$[m,b_1,\dots,b_s]=0$. Thus, $S=0$, which means that $M$ is contained in the
$s$th term of the upper central series of $K$. The fact that $L/M$ is
nilpotent of $(d,r,k)$-bounded class now implies that $K$ is nilpotent
of $(d,r,k)$-bounded class as well. Combining this with the fact that
 $L/[K,K]$ is also nilpotent of $(d,r,k)$-bounded class and using the Lie
ring analogue \eqref{chao} of P.~Hall's theorem  we deduce
that $L$ is also nilpotent of $(d,r,k)$-bounded class. The proof is complete.
\end{proof}

We now prove the main result of this section.

\begin{theorem} \label{main}
Suppose that a finite Frobenius group
$FH$ with cyclic kernel $F$ and complement $H$ acts on a finite group $G$ in
such a manner that $C_G(F)=1$ and $C_G(H)$ has exponent $e$. Then
the exponent of $G$ is bounded solely in terms of $e$ and $|FH|$.
\end{theorem}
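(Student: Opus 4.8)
The plan is to prove the theorem by induction on $|F|$, reducing by several standard devices to the case of a powerful $p$-group, where the fixed-point results of \S\,2 apply directly. Since $\exp G$ is the product of the exponents of the Sylow subgroups of $G$, and since, by Theorem~\ref{t-orn}(a) and Cauchy's theorem, every prime dividing $|G|$ divides $e=\exp C_G(H)$ (so only $e$-boundedly many primes occur), it suffices to bound $\exp P$ for the unique $FH$-invariant Sylow $p$-subgroup $P$ of $G$ (Lemma~\ref{sylow}); here $C_P(F)=1$ and $\exp C_P(H)$ divides $e$. (Alternatively one can first pass, by Dade's theorem~\cite{dade}, to the nilpotent Fitting factors of the solvable group $G$.) Writing $F=F_p\times F_{p'}$ and arguing as in Lemma~\ref{l-free} that $C_P(F_{p'})=1$, one sees that either $P=1$ or $F_{p'}\neq 1$, so we may replace $F$ by $F_{p'}$ and assume $F$ is a nontrivial cyclic $p'$-group with $FH$ still a Frobenius group. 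To bound $\exp P$ we bound the order of an arbitrary $x\in P$: put $Q=\langle x^{g}\mid g\in FH\rangle$, an $FH$-invariant subgroup generated by at most $|FH|$ elements with $C_Q(F)=1$ and $\exp C_Q(H)\leq e$. Passing to the image of $FH$ in ${\rm Aut}\,Q$ (whose kernel is a normal subgroup of $FH$ not containing $F$, hence contained in $F$, so the image is again a Frobenius group with cyclic $p'$-kernel) preserves all the hypotheses.

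Form $L=L_p(Q)$; it is $\mathbb Z$-graded, $L=\langle L_1\rangle$, and $\dim_{\mathbb F_p}L_1$ equals the rank of $Q/\Phi(Q)$, hence is at most $|FH|$. As $F$ acts coprimely on the $p$-group $Q$ with $C_Q(F)=1$, it acts fixed-point-freely on every section $D_i(Q)/D_{i+1}(Q)$, so $C_L(F)=0$; by Kreknin's theorem $L$ is then solvable of derived length bounded in terms of $|F|$. It remains to check the last hypothesis of Lemma~\ref{lie}: that every homogeneous component of $L$ is spanned by ad-nilpotent elements of $(e,|FH|)$-bounded index. Extend scalars to a field $\overline{\mathbb F}_p$ splitting $F$ and decompose $\overline L=L\otimes\overline{\mathbb F}_p$ into $F$-eigenspaces $\overline L^{(j)}$, $j\in\mathbb Z/|F|$; then $\overline L^{(0)}=0$ (because $C_L(F)=0$) and each $\overline L^{(j)}$ is homogeneous for the $\mathbb Z$-grading. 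When $\gcd(j,|F|)=1$, bracketing shifts the $\mathbb Z/|F|$-grading by a generator, and a short computation gives that every element of $\overline L^{(j)}$ is ad-nilpotent of index at most $|F|-1$. When $d=\gcd(j,|F|)>1$, one has $\overline L^{(j)}\subseteq C_{\overline L}(F_0)$ for the subgroup $F_0$ of $F$ of order $d$; here $(F/F_0)H$ is a Frobenius group with cyclic kernel of order $|F|/d<|F|$ acting on $C_Q(F_0)$ with $C_{C_Q(F_0)}(F/F_0)=1$ and $\exp C_{C_Q(F_0)}(H)\leq e$, so the inductive hypothesis bounds $\exp C_Q(F_0)$; hence every element of $C_{D_i(Q)}(F_0)$ has bounded order, and by Lazard's Lemma~\ref{3.6} its image in $L$ is ad-nilpotent of bounded index, producing a spanning set of ad-nilpotent elements of bounded index for each homogeneous component of $C_{\overline L}(F_0)$. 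Collecting the eigenspaces, every homogeneous component of $\overline L$, hence of $L$, is spanned by ad-nilpotent elements of $(e,|FH|)$-bounded index, and Lemma~\ref{lie} applies, giving that $L_p(Q)$ is nilpotent of $(e,|FH|)$-bounded class.

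By Lemma~\ref{4.9} the group $Q$ then has a powerful characteristic subgroup $R$ of $(e,|FH|)$-bounded index (the dependence on $p$ is harmless since $p\mid e$). Being characteristic in $Q$, the subgroup $R$ is $FH$-invariant with $C_R(F)=1$ and $\exp C_R(H)\leq e$, so $R=\langle C_R(H)^f\mid f\in F\rangle$ by Lemma~\ref{l-gen}. Each conjugate $C_R(H)^f$ consists of elements of order dividing $e$, hence lies in $\Omega_m(R)=\{r\in R\mid r^{p^m}=1\}$, where $p^m$ is the $p$-part of $e$; for a powerful $p$-group $\Omega_m(R)$ is a subgroup of exponent dividing $p^m$ \cite{lbmn}, so $R=\Omega_m(R)$ and $\exp R\leq e$. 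Therefore $\exp Q$ divides $|Q:R|\cdot\exp R$ and is $(e,|FH|)$-bounded, so the order of $x$ is $(e,|FH|)$-bounded; as $x\in P$ was arbitrary, $\exp P$, and hence $\exp G$, is bounded in terms of $e$ and $|FH|$ only.

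The crux — and the step I expect to be the main obstacle — is the verification that \emph{every} homogeneous component of $L_p(Q)$ is spanned by ad-nilpotent elements of bounded index, which is exactly what makes Lemma~\ref{lie} applicable. For prime $|F|$ this is immediate, since all nontrivial $F$-eigenvalues are then primitive $|F|$-th roots of unity and the graded argument covers every eigenspace; the general case genuinely requires the induction on $|F|$ to dispose of the non-coprime eigenspaces, together with the bookkeeping needed to carry the ad-nilpotency condition from $\mathbb F_p$ up to the splitting field $\overline{\mathbb F}_p$.
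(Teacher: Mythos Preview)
Your proof is correct and reaches the same destination, but the route you take at the key step---showing that each $\mathbb Z$-homogeneous component of $L_p(Q)$ is spanned by ad-nilpotent elements of bounded index---is genuinely different from the paper's.

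The paper handles this step directly and without any induction on $|F|$ or eigenspace decomposition: since each $L_i$ is an $FH$-invariant section of $Q$ with $C_{L_i}(F)=0$, Lemma~\ref{l-gen} gives that $L_i$ is generated additively by the subsets $C_{L_i}(H)^f$, $f\in F$; then Theorem~\ref{t-fp} (applied to $D_i$ modulo $D_{i+1}$) shows that every element of $C_{L_i}(H)$ is the image of an element of $C_G(H)$, hence has a preimage of order dividing~$e$; Lazard's Lemma~\ref{3.6} then makes every such element ad-nilpotent of index at most~$e$, and Lemma~\ref{lie} applies at once. In particular the paper never needs to pass to $F_{p'}$, extend scalars, or induct on~$|F|$.

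Your approach instead exploits the $(\mathbb Z/|F|\mathbb Z)$-grading coming from the $F$-eigenspaces of $\overline L$: eigenspaces for eigenvalues coprime to $|F|$ consist of ad-nilpotent elements of index at most $|F|-1$ purely from the grading (using $\overline L^{(0)}=0$), while the remaining eigenspaces lie in $C_{\overline L}(F_0)$ for a nontrivial proper $F_0<F$, and you bound $\exp C_Q(F_0)$ by the inductive hypothesis before invoking Lazard. This works, but is more elaborate and yields a recursively defined ad-nilpotency bound rather than the clean bound~$e$. The moral difference is that the paper leverages the complement $H$ (via Theorem~\ref{t-fp}) at this Lie-algebra step, whereas you leverage the kernel $F$; once Theorem~\ref{t-fp} is in hand, the paper's argument is both shorter and sharper.
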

\begin{proof}

By Theorem~\ref{razr} the group $G$ is solvable. The nilpotent length
(Fitting height) of $G$ is bounded in terms of $|F|$ by Dade's
theorem~\cite{dade}. Therefore it is sufficient to bound the
exponents of the factors of the Fitting series of $G$. By
Lemma~\ref{l-car} and Theorem~\ref{t-fp} each of them inherits the
hypotheses $C_G(F)=1$ and $C_G(H)^e=1$.
Therefore we can assume from the outset that $G$ is a
finite $p$-group for some prime $p$. In view of Lemma~\ref{l-gen},
$G=\langle C_G(H)^f\mid f\in F\rangle$, so $p$ divides $e$.
We assume without loss of generality that $e$ is a $p$-power.
Let $x\in G$. It is clear that $x$ is contained in an
$FH$-invariant subgroup of $G$ with at most $|FH|$ generators.
Therefore without loss of generality we can assume that $G$ is
$|FH|$-generated.

 Any group of automorphisms of the group $G$ acts
naturally on every factor of the Jennings--Zassenhaus filtration
of $G$. This action induces an action by automorphisms on the Lie algebra
$L=L_p(G)$. Lemma~\ref{l-car} shows that $F$ is fixed-point-free on every
factor of the Jennings--Zassenhaus filtration. Hence
$C_L(F)=0$. Kreknin's theorem \cite{kr} now tells us that $L$ is
solvable of $|F|$-bounded derived length.

Every homogeneous
component $L_i$ of the Lie algebra $L$ can be regarded as an $FH$-invariant
subgroup of the corresponding quotient $D_i/D_{i+1}$
of the Jennings--Zassenhaus series.
By Lemma~\ref{l-gen} we obtain that $L_i$
is generated by the centralizers $C_{L_i}(H)^f$ where
$f$ ranges through $F$. Moreover, Theorem \ref{t-fp} implies that every
element of $C_{L_i}(H)^f$ is the image of some element of $C_G(H)^f$,
the order of which divides~$e$.
It follows by Lemma~\ref{3.6} that the additive group $L_i$ is generated by elements
that are ad-nilpotent of index at most~$e$. We now deduce from Lemma~\ref{lie} that $L$
is nilpotent of $(e,|FH|)$-bounded class.

Lemma~\ref{4.9} now tells us that $G$ has a powerful
characteristic subgroup of $(e,|FH|)$-bounded index. It suffices
to bound the exponent of this powerful subgroup so we can just
assume that $G$ is powerful. Powerful $p$-groups have the property
that if a powerful $p$-group $G$ is generated by elements of order
dividing $p^k$, then the exponent of $G$ also divides $p^k$ (see
\cite[Lemma~2.2.5]{ddms}). Combining this with the fact that our
group $G$ is generated by elements of $C_G(H)^f$, $f\in F$, we conclude that
$G$ has exponent~$e$.
\end{proof}

\section{Cyclically graded Lie rings with `selective nilpotency' condition}

In this section we develop a Lie ring theory which is used in
\S\,5 for studying groups $G$ and Lie rings $L$ with a metacyclic
Frobenius group of automorphisms $FH$. This theory is stated in
terms Lie rings with finite cyclic grading, which will arise from
the `eigenspaces' for $F$. By Kreknin's theorem \cite{kr} the
condition $C_L(F)=0$ implies the solvability of $L$ of derived
length bounded in terms of $|F|$, but our aim is to obtain
nilpotency of class bounded in terms of $|H|$ and the nilpotency
class of $C_L(H)$. (Here $C_L(F)$ and $C_L(H)$ denote the
fixed-point subrings for $F$ and $H$.) The nilpotency of $C_L(H)$
of class $c$ implies certain restrictions on the commutation of
the grading components, which we nickname `selective nilpotency'.
For example, in \cite{khu09} it was shown that if $c=1$, that is,
$C_L(H)$ is abelian, and $|F|$ is a prime, then each component
commutes with all but at most $(c,|H|)$-boundedly many components,
which in turn implies a strong bound for the nilpotency class of
$L$. In this section we work with another, rather technical
`selective nilpotency' condition. As we shall see in \S\,5, this
condition actually arises quite naturally in dealing with a Lie
ring admitting a metacyclic Frobenius group of automorphisms;
similar conditions were exploited earlier in the paper
\cite{mak-shu10} on double Frobenius groups. In this section we
virtually achieve a good $(c,|H|)$-bounded derived length and Engel-type conditions, which
prepare ground for the next section, where   nilpotency of
$(c,|H|)$-bounded class is finally obtained by using additional
considerations.

\subsubsection*{Numerical preliminaries} The next two lemmas are elementary facts on polynomials.

\begin{lemma}\label{char0}
Let $K$ be a field of characteristic 0 containing
a primitive $q$th root of unity $\omega$. Suppose that $m=\omega^{i_1}+\dots+\omega^{i_m}$
for some positive integer $m$ and some $0\leq i_1,\dots,i_m\leq q-1$. Then $i_1=\dots=i_m=0$.
\end{lemma}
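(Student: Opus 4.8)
The plan is to view the hypothesis as an identity among algebraic numbers and reduce it, via Galois conjugation and a positivity argument, to the statement that all the exponents must vanish. First I would discard the trivial case $q=1$ and assume $q\geq 2$, so that $\omega$ is a root of the $q$th cyclotomic polynomial $\Phi_q$ over $\mathbb{Q}$. Write the hypothesis as $\omega^{i_1}+\dots+\omega^{i_m}=m$, and group equal exponents: there are nonnegative integers $n_0,n_1,\dots,n_{q-1}$ with $\sum_{j} n_j = m$ such that $\sum_{j=0}^{q-1} n_j\omega^{j}=m$. Equivalently, $f(\omega)=0$ where $f(x)=(n_0-m)+n_1 x+\dots+n_{q-1}x^{q-1}\in\mathbb{Z}[x]$. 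Since $\Phi_q$ is the minimal polynomial of $\omega$ over $\mathbb{Q}$, it divides $f$; in particular every primitive $q$th root of unity $\omega^k$ (with $\gcd(k,q)=1$) is also a root of $f$, i.e. $\sum_{j=0}^{q-1} n_j\omega^{jk}=m$ for all such $k$.

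The key step is a positivity estimate. For any $k$ coprime to $q$ we have
\[
m=\Big|\sum_{j=0}^{q-1} n_j\omega^{jk}\Big|\leq \sum_{j=0}^{q-1} n_j=m,
\]
so equality holds in the triangle inequality. Since all $n_j\geq 0$, equality in the triangle inequality forces all the nonzero terms $n_j\omega^{jk}$ to be nonnegative real multiples of one another; as $m=\sum_j n_j\omega^{jk}$ is a positive real, each such term must be a positive real, i.e. $\omega^{jk}=1$ whenever $n_j\neq 0$. Taking $k=1$ already gives $\omega^{j}=1$ whenever $n_j\neq 0$, hence $q\mid j$ for every $j$ with $n_j\neq 0$; since $0\leq j\leq q-1$ this means $n_j=0$ for all $j\geq 1$ and $n_0=m$. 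Therefore $i_1=\dots=i_m=0$, as claimed.

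The only real subtlety is justifying that divisibility of $f$ by $\Phi_q$ (equivalently, that all primitive $q$th roots of unity are roots of $f$) together with the triangle-inequality equality is enough; in fact, as the argument shows, one does not even need the full Galois orbit — the single evaluation at $\omega$ itself suffices once one uses that $\omega$ has positive-real value $m$ under the relevant sum and that the coefficients are nonnegative. So the main (and quite mild) obstacle is simply recording carefully the equality case of the triangle inequality for a sum of vectors with nonnegative coefficients; everything else is immediate. I would present the short direct version: from $\sum_j n_j\omega^j=m$ with $n_j\geq 0$ and $\sum_j n_j=m$, the equality $|\sum_j n_j\omega^j|=\sum_j n_j|\omega^j|$ forces $\omega^j=1$ for every $j$ in the support of $(n_j)$, and conclude.
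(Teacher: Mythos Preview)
Your proof is correct and takes essentially the same approach as the paper: reduce to $\mathbb{Q}[\omega]\subset\mathbb{C}$ and use the equality case of the triangle inequality for the sum of $m$ complex numbers of modulus~$1$. The paper's version is simply the terse two-line form of your final ``short direct version''; your detour through $\Phi_q$ and the Galois conjugates is, as you yourself note, unnecessary.
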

\begin{proof}
Without loss of generality it can be assumed that
$K=\Bbb Q[\omega]$. Since $m=|\omega^{i_1}|+\dots+|\omega^{i_m}|$,
the lemma follows from the triangle inequality.
\end{proof}

\begin{lemma}\label{charp}
Let $g_1(x)=a_sx^s+\cdots+ a_0$ and
$g_2(x)=b_tx^t+\cdots+ b_0$ be polynomials with integer
coefficients that have no non-zero common complex roots and let
$M=\max_{i,j}\{|a_i|,|b_j|\}$. Suppose that $n_0$ is a positive integer such
that $g_1$ and $g_2$ have a non-zero common root in $\Bbb
Z/n_0\Bbb Z$. Then $n_0\leq 2^{2^{(s+t-1)}-1}M^{2^{(s+t-1)}}$.
\end{lemma}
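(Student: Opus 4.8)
The plan is to bound $n_0$ via resultants together with a quantitative elimination argument. The key classical fact I would invoke is that if $g_1,g_2\in\Z[x]$ have no common complex root, then their resultant $R=\operatorname{Res}(g_1,g_2)$ is a non-zero integer, and there are polynomials $u,v\in\Z[x]$ with $u g_1+v g_2=R$ (this is the standard Bézout-type identity coming from the Sylvester matrix, so $u,v$ have integer coefficients and no denominators appear). Reducing this identity modulo $n_0$: if $r$ is a non-zero common root of $g_1,g_2$ in $\Z/n_0\Z$, then $R\equiv u(r)g_1(r)+v(r)g_2(r)\equiv 0 \pmod{n_0}$. Hence $n_0$ divides the non-zero integer $R$, so $n_0\le |R|$. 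It then remains to bound $|R|$ in terms of $M$, $s$, and $t$.

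To estimate $|R|$ I would use the Hadamard bound applied to the Sylvester matrix of $g_1$ and $g_2$. The Sylvester matrix is an $(s+t)\times(s+t)$ matrix whose rows are shifts of the coefficient vectors of $g_1$ (there are $t$ such rows) and of $g_2$ (there are $s$ such rows). Each row of the first block has Euclidean norm $\le M\sqrt{s+1}$ and each row of the second block has norm $\le M\sqrt{t+1}$; a crude single bound $M\sqrt{s+t+1}$ on every row suffices. Hadamard's inequality then gives $|R|=|\det|\le \bigl(M\sqrt{s+t+1}\bigr)^{s+t}$. This is already a clean bound of the shape claimed, though not literally in the stated form; to land on exactly $2^{2^{(s+t-1)}-1}M^{2^{(s+t-1)}}$ I would just note that $\bigl(M\sqrt{s+t+1}\bigr)^{s+t}$ is dominated by the stated expression for all $s,t\ge 1$ (since $2^{2^{(s+t-1)}-1}$ grows far faster than $(s+t+1)^{(s+t)/2}$ and $2^{2^{(s+t-1)}}\ge s+t$), so a one-line comparison of elementary functions finishes it. In fact the doubly-exponential form in the statement suggests the authors have in mind an inductive elimination of one coefficient at a time (each step roughly squaring the bound), which is an alternative route: repeatedly take an integer combination of $g_1,g_2$ that lowers the degree, tracking how the coefficient sizes grow; after $s+t-1$ steps one reaches a non-zero constant, and the $2^{s+t-1}$ exponents are exactly what such a halving/squaring recursion produces.

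I would present the resultant proof as the main line, because it is shortest and the integrality of the Bézout cofactors is exactly what makes the mod-$n_0$ reduction work cleanly. The one point to be careful about, and what I expect to be the only real obstacle, is making sure everything stays over $\Z$ rather than $\Q$: one must use the adjugate of the Sylvester matrix (not Cramer's rule over the fraction field) to get $u g_1 + v g_2 = R$ with $u,v\in\Z[x]$, and one must check that the entries produced are genuine polynomial combinations of $g_1,g_2$ so that $g_1(r)=g_2(r)=0$ in $\Z/n_0\Z$ really does force $R\equiv 0$. Once that is in place, the size estimate is routine Hadamard plus the elementary comparison of growth rates to match the exact constant in the statement.
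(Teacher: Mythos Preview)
Your resultant approach is correct and genuinely different from the paper's. The paper carries out precisely the inductive elimination you sketch at the end as the ``alternative route'': assuming $s\geq t$, it replaces $g_1$ by $p(x)=b_t\,g_1(x)-a_s\,x^{s-t}g_2(x)$, observes that $p$ has degree at most $s-1$, still shares the non-zero root in $\mathbb{Z}/n_0\mathbb{Z}$ with $g_2$, and has no non-zero common complex root with $g_2$; then it applies induction on $s+t$. The new coefficient bound satisfies $M_0\leq 2M^2$, and iterating this squaring is exactly what produces $2^{2^{s+t-1}-1}M^{2^{s+t-1}}$. Your Sylvester--Hadamard argument is shorter and gives the much tighter singly-exponential bound $|R|\leq (M\sqrt{s+1})^{t}(M\sqrt{t+1})^{s}$, which is easily dominated by the stated doubly-exponential expression; the integrality of the B\'ezout cofactors via the adjugate of the Sylvester matrix is exactly the right justification for the reduction modulo~$n_0$.

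One small caveat worth flagging: your argument needs $\operatorname{Res}(g_1,g_2)\neq 0$, i.e.\ that $g_1$ and $g_2$ have no common complex root whatsoever, whereas the lemma as stated only forbids \emph{non-zero} common complex roots. If $a_0=b_0=0$ the resultant vanishes and your bound is vacuous --- indeed the lemma is actually false in that case (take $g_1=x^2$, $g_2=2x$, $n_0=2^k$, $r=2^{k-1}$ for any $k\geq 2$). The paper's inductive proof tacitly makes the same assumption, since the reduction step can yield $p=0$ in this degenerate situation. In the paper's sole application of the lemma both polynomials have non-zero constant term, so the issue never arises; you should simply note that your argument proves the lemma under the (intended) hypothesis that $g_1,g_2$ have no common complex root at all.
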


\begin{proof}
We use induction on $s+t$.
If one of the polynomials has degree $0$, say, $g_1(x)=b$,
then $n_0$ divides $|b|$ and the statement is correct.
Therefore we can assume that $s$ and $t$ are both positive and $a_s,  b_t\ne 0$.
Let $s\geq t$. We set $p(x)=b_tg_1(x)-g_2(x)a_sx^{s-t}$. The polynomial
$p(x)$ has degree at most $s-1$ and the polynomials $g_2(x)$ and
$p(x)$ have a non-zero common root in $\Bbb Z/n_0\Bbb Z$ but not
in $\Bbb C$.
 We observe that $p(x)$ is of the form
$$p(x)=c_{s-1}x^{s-1}+\cdots+c_0,$$
where
$$c_i=b_ta_i-a_sb_{i-s+t} \,\,\,\,\,\mathrm{for}\,\,\,
\,\,i=s-t, \dots, s-1,$$
$$c_i= b_ta_i \,\,\,\,\,\mathrm{for} \,\,\,\,\,i=0,\dots, s-t-1.$$
Let $M_0=\max_{i,j}\{|c_i|,|b_j|\}$. By induction, $n_0\leq
2^{2^{(s+t-2)}-1}{M_0}^{2^{(s+t-2)}}$. Using that $M_0\leq 2M^2$
we compute $$n_0\leq 2^{2^{(s+t-2)}-1}{M_0}^{2^{(s+t-2)}}\leq
2^{2^{(s+t-1)}-1}M^{2^{(s+t-1)}},$$ as required.
\end{proof}

In \S\,5 we shall consider a Frobenius group $FH$ with
cyclic kernel $F=\langle f\rangle$ of order $n$ and (necessarily cyclic) complement $H=\langle h\rangle$ of order $q$ acting
as $f^h=f^r$ for $1\leq r\leq n-1$. This is where the following conditions come from for numbers $n, q, r$, which
we fix for the rest of this section:
\begin{equation}\label{prim}
\begin{split}
 & n, q, r \text{ are positive integers such that } 1\leq r \leq n-1 \text{ and } \\
&\quad\text{the image of } r \text{ in } {\Bbb Z}/d{\Bbb Z} \text{ is a
primitive } q \text{th root of } 1 \\ &\qquad \qquad\qquad\text{for every divisor } d
\text{ of }n.
\end{split}
\end{equation}

In particular, $q$ divides $d-1$ for every
divisor $d$ of $n$. When convenient, we freely identify $r$ with its image in $\Bbb
Z/n\Bbb Z$ and regard $r$ as an element of $\Bbb Z/n\Bbb Z$ such
that $r^q=1$.

\begin{definition}
Let $a_1,\dots,a_k$ be not
necessarily distinct non-zero elements of $\Bbb Z/n\Bbb Z$. We
say that the sequence $(a_1,\dots,a_k)$ is \textit{$r$-dependent} if
$$a_{1}+\dots+a_{k}=r^{\alpha_1}a_{1}+\dots+r^{\alpha_k}a_{k}$$
for some $\alpha_i \in\{0,1,2,\dots,q-1\}$ not all of which are zero.
If the sequence $(a_1,\dots,a_k)$ is not $r$-dependent, we
call it \textit{$r$-independent}.
\end{definition}

\begin{remark}\label{remark}
A single non-zero element $a\in \Bbb Z/n\Bbb Z$ is always $r$-independent:
if $a=r^{\alpha}a$ for $\alpha \in\{1,2,\dots,q-1\}$, then $a=0$ by \eqref{prim}.
\end{remark}

\begin{notation}
For a given $r$-independent sequence $(a_1,\dots,a_k)
$ we
denote by $D(a_1,\dots,a_k)$ the set of all $j\in\Bbb Z/n\Bbb Z$
such that $(a_1,\dots,a_k,j)$ is $r$-dependent.
\end{notation}

\begin{lemma}\label{115}
If $(a_1,\dots,a_k)$ is $r$-independent, then
$|D(a_1,\dots,a_k)|\leq q^{k+1}$.
\end{lemma}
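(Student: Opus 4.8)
The plan is to count the elements $j \in \Bbb Z/n\Bbb Z$ for which $(a_1,\dots,a_k,j)$ becomes $r$-dependent by classifying the defining relation according to the exponent attached to $j$. By definition, $j \in D(a_1,\dots,a_k)$ means there exist $\alpha_1,\dots,\alpha_k,\beta \in \{0,1,\dots,q-1\}$, not all zero, with
$$a_1+\dots+a_k+j = r^{\alpha_1}a_1+\dots+r^{\alpha_k}a_k + r^{\beta}j.$$
First I would dispose of the case $\beta = 0$: then the relation reads $a_1+\dots+a_k = r^{\alpha_1}a_1+\dots+r^{\alpha_k}a_k$ with $(\alpha_1,\dots,\alpha_k)$ not all zero, which is exactly the statement that $(a_1,\dots,a_k)$ is $r$-dependent, contradicting the hypothesis. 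Hence every $j \in D(a_1,\dots,a_k)$ satisfies such a relation with $\beta \in \{1,\dots,q-1\}$.

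Now fix a choice of the tuple $(\alpha_1,\dots,\alpha_k,\beta)$ with $\beta \neq 0$. The displayed equation becomes a linear equation in $j$ over $\Bbb Z/n\Bbb Z$:
$$(1 - r^{\beta})\, j = \Big(\sum_{i=1}^{k}(r^{\alpha_i}-1)a_i\Big) \quad \text{in } \Bbb Z/n\Bbb Z.$$
The key point is that $1 - r^{\beta}$ is a \emph{non-zero-divisor} modulo $n$: if some divisor $d>1$ of $n$ divided $1-r^{\beta}$, then the image of $r$ in $\Bbb Z/d\Bbb Z$ would satisfy $r^{\beta}=1$ with $1\le \beta\le q-1$, contradicting that this image is a primitive $q$th root of unity (condition~\eqref{prim}). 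Actually one gets more: since $\gcd(1-r^\beta, n)$ must be a unit — more precisely, for each prime power $p^a \| n$ the element $1-r^\beta$ is a unit mod $p^a$, as $r$ is a primitive $q$th root mod $p^a$ and $\beta\not\equiv 0$ — the element $1-r^{\beta}$ is in fact \emph{invertible} in $\Bbb Z/n\Bbb Z$. Therefore the equation above has a \emph{unique} solution $j$ for each fixed tuple $(\alpha_1,\dots,\alpha_k,\beta)$ with $\beta\ne 0$.

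It remains to count the tuples. Each $\alpha_i$ ranges over $\{0,1,\dots,q-1\}$, giving $q^k$ choices, and $\beta$ ranges over $\{1,\dots,q-1\}$, giving $q-1 < q$ choices; so there are fewer than $q^{k+1}$ such tuples, each contributing at most one element to $D(a_1,\dots,a_k)$. Hence $|D(a_1,\dots,a_k)| \le q^{k+1}$ (in fact $\le (q-1)q^k$), as claimed. I do not anticipate a genuine obstacle here: the only thing to be careful about is the invertibility of $1-r^{\beta}$ modulo $n$, which is where condition~\eqref{prim} — requiring $r$ to be a primitive $q$th root of unity modulo \emph{every} divisor of $n$, not just modulo $n$ itself — is used in an essential way. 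If one only knew $r^q\equiv 1 \pmod n$, the element $1-r^\beta$ could be a zero-divisor and the bound would fail.
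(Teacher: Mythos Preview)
Your proof is correct and follows essentially the same argument as the paper's own proof: rule out the case $\beta=0$ by $r$-independence of $(a_1,\dots,a_k)$, then use condition~\eqref{prim} to conclude that $1-r^{\beta}$ is invertible in $\Bbb Z/n\Bbb Z$ so that $j$ is uniquely determined by the tuple $(\alpha_1,\dots,\alpha_k,\beta)$, and count tuples. Your observation that the bound can be sharpened to $(q-1)q^k$ is correct but not needed downstream.
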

\begin{proof}
Suppose that $(a_1,\dots,a_k,j)$ is $r$-dependent. We have
$$a_1+a_2+\dots+a_k+j=r^{i_1}a_1+r^{i_2}a_2+\dots+r^{i_k}a_k+r^{i_0}j$$
for suitable $0\leq i_s\leq q-1$, where not all of the $i_s$ equal
to $0$. In fact, $i_0\neq0$, for otherwise the sequence $(a_1,\dots,
a_k)$ would not be $r$-independent. Moreover, $1-r^{i_0}$ is
invertible because by our assumption \eqref{prim} the image of $r$ in $\Bbb Z/d\Bbb Z$ is a
primitive $q$th root of 1 for every divisor $d$ of $n$.
We see that
$$j=(r^{i_1}a_1+\dots+r^{i_k}a_k-a_1-a_2-\dots-a_k)/(1-r^{i_0}),$$
so there are at most $q^{k+1}$ possibilities for $j$, as required.
\end{proof}

We will need a sufficient condition for a
sequence $(a_1,\dots,a_k)$ to contain an $r$-independent
subsequence.

\begin{lemma}\label{rigid}
Suppose that for some $m$ a sequence
$(a_1,\dots,a_k)$ of non-zero elements of $\Bbb Z/n\Bbb Z$
contains at least $q^m+m$ different values. Then
one can choose an $r$-independent subsequence
$(a_1,a_{i_2},\dots,a_{i_m})$ of $m$ elements that contains $a_1$.
\end{lemma}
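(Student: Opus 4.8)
The plan is to build the required $r$-independent subsequence greedily, one element at a time, using Lemma~\ref{115} to control how many values must be discarded at each step. Start with $a_1$, which is $r$-independent by Remark~\ref{remark}. Suppose inductively that we have already selected an $r$-independent subsequence $(a_1,a_{i_2},\dots,a_{i_\ell})$ of length $\ell < m$, all of whose entries are drawn from the (at least $q^m+m$) distinct values occurring in $(a_1,\dots,a_k)$. To extend it, I want to find a new value $a_{i_{\ell+1}}$ — one genuinely different from $a_1,a_{i_2},\dots,a_{i_\ell}$ as an element of $\Bbb Z/n\Bbb Z$ — such that $(a_1,a_{i_2},\dots,a_{i_\ell},a_{i_{\ell+1}})$ is still $r$-independent, i.e. such that $a_{i_{\ell+1}}\notin D(a_1,a_{i_2},\dots,a_{i_\ell})$.

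The counting goes as follows. By Lemma~\ref{115}, $|D(a_1,a_{i_2},\dots,a_{i_\ell})|\le q^{\ell+1}\le q^m$ (since $\ell\le m-1$). The values we must avoid are the $\ell$ already-chosen ones together with this set $D(a_1,a_{i_2},\dots,a_{i_\ell})$: at most $q^m+\ell\le q^m+(m-1)$ forbidden values. Since the sequence $(a_1,\dots,a_k)$ takes at least $q^m+m$ distinct values, there remains at least one admissible value $a_{i_{\ell+1}}$ occurring in the sequence; pick any index $i_{\ell+1}$ realizing it. This extends the $r$-independent subsequence to length $\ell+1$. Iterating from $\ell=1$ up to $\ell=m-1$ produces the desired $r$-independent subsequence $(a_1,a_{i_2},\dots,a_{i_m})$ of $m$ elements containing $a_1$.

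There is essentially no serious obstacle here — the statement is designed to be a clean consequence of Lemma~\ref{115}. The one point that needs a moment's care is the bookkeeping of what "different values" means: $D(a_1,\dots,a_{i_\ell})$ is a set of elements of $\Bbb Z/n\Bbb Z$, and the already-chosen $a_1,\dots,a_{i_\ell}$ are $\ell$ particular elements of $\Bbb Z/n\Bbb Z$, so the two forbidden collections live in the same ambient set and their sizes simply add; meanwhile the hypothesis is phrased in terms of the number of distinct values in the sequence, so one must make sure the newly chosen value is both admissible \emph{and} actually attained by some entry of $(a_1,\dots,a_k)$, which is exactly what the inequality $q^m+m > q^m+(m-1)$ guarantees. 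A minor subtlety is that $a_1$ itself counts as one of the first chosen values, so the induction should be set up with the base case $\ell=1$ already in hand via Remark~\ref{remark}, and one should note that at the final step $\ell=m-1$ the bound $q^{\ell+1}=q^m$ is still within budget.
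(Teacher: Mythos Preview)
Your proof is correct and follows essentially the same approach as the paper: a greedy extension using Lemma~\ref{115} to bound the forbidden set at each step, with Remark~\ref{remark} providing the base case. The paper phrases this as induction on $m$ (applying the lemma for $m-1$ and then extending once), whereas you run the internal induction on the length $\ell$ of the partial subsequence; the arithmetic is the same, and your extra care to avoid repeating already-chosen values is harmless though not strictly needed.
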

\begin{proof}
If $m=1$, the lemma is obvious, as $a_1$ is $r$-independent by Remark~\ref{remark}.
So we assume that $m\geq 2$
and use induction on $m$. By induction we can choose an
$r$-independent subsequence of length $m-1$ starting from $a_1$.
Without loss of generality we can assume that $(a_1,a_{2},\dots,a_{m-1})$ is
an $r$-independent subsequence. By Lemma~\ref{115} there are at
most $q^m$ distinct elements in $D(a_1,a_{2},\dots,a_{m-1})$. Since there are at least
$q^m+m$ different values in the sequence $(a_1,\dots,a_k)$, the sequence
$a_m,a_{m+1}\dots,a_k$ contains at least $q^m+1$ different values.
So we can always choose an element $b\not\in D(a_1,a_{2},\dots,a_{m-1})$ among
$a_m,a_{m+1}\dots,a_k$ such that the sequence
$(a_1,a_{2},\dots,a_{m-1},b)$ is $r$-independent.
\end{proof}

\subsubsection*{`Selective nilpotency' condition on graded Lie algebras}

To recall the definitions of graded Lie algebras, homogeneous
elements and commutators, see the beginning of \S\,3.
Here we work with a $(\Bbb Z/n\Bbb Z)$-graded Lie ring $L$ such that $L_0=0$.
Formally the symbol $\Bbb Z/n\Bbb Z$ here means the
additive group $\Bbb Z/n\Bbb Z$. However, it will be convenient to
use the same symbol to denote also the ring $\Bbb Z/n\Bbb Z$. To
avoid overloaded notation we adopt the following convention.

\begin{Index Convention}
Henceforth a small Latin
letter with an index $i\in \Bbb Z/n\Bbb Z$ will denote a
homogeneous element in the grading component $L_i$, with the index
only indicating which component this element belongs to: $x_i\in
L_i$. We will not be using numbering indices for elements of the
$L_i$, so that different elements can be denoted by the same
symbol when it only matters which component the elements belong
to. For example, $x_{i}$ and $x_{i}$ can be different elements of
$L_{i}$, so that $[x_{i},\, x_{i}]$ can be a non-zero element of
$L_{2i}$.
\end{Index Convention}

Note that under Index Convention a homogeneous commutator
belongs to the component $L_s$ with index $s$ equal to the sum of
indices of all the elements involved in this commutator.
\vspace{1ex}

 \begin{definition}
 Let $n, q, r$ be integers defined by \eqref{prim}. We say that a $(\Z/n\Z)$-graded Lie ring $L$ satisfies
the \textit{selective $c$-nilpotency condition}
if,
under Index Convention,
\begin{equation}\label{select}
[x_{d_1},x_{d_2},\dots, x_{d_{c+1}}]=0\quad \text{ whenever }
(d_1,\dots,d_{c+1}) \text{ is $r$-independent}.
\end{equation}
 \end{definition}

\begin{notation}
We use the usual notation $(k,l)$ for the greatest common divisor of integers $k,l$.

Given $b\in\Bbb Z/n\Bbb Z$, we denote by $o(b)$
the order of $b$ (in the additive group). Thus, $o(b)$ is the least positive integer such that
$\underbrace{b+b+\dots+b}_{o(b)}=0$.
\end{notation}

In the following lemmas the condition $L_0=0$ ensures that all indices of non-trivial elements are non-zero, so
all sequences of such indices are either $r$-dependent or $r$-independent.

\begin{lemma}\label{lb}
Suppose that a $(\Z/n\Z)$-graded Lie ring $L$ with $L_0=0$ satisfies
the selective $c$-nilpotency
condition~\eqref{select},
and let $b$ be an element of $\Bbb{Z}/n\Bbb{Z}$ such that
$o(b)>2^{2^{2q-3}-1}c^{2^{2q-3}}$. Then there are at most
$q^{c+1}$ elements $a\in\Bbb{Z}/n\Bbb{Z}$ such that
$[L_a,\underbrace{L_b,\dots,L_b}_c]
\neq 0$.
\end{lemma}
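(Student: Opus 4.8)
The plan is to argue by contradiction: suppose there are more than $q^{c+1}$ elements $a\in\Bbb Z/n\Bbb Z$ with $[L_a,L_b,\dots,L_b]\neq 0$ (with $L_b$ occurring $c$ times). The selective $c$-nilpotency condition says $[x_{d_1},\dots,x_{d_{c+1}}]=0$ whenever the sequence $(d_1,\dots,d_{c+1})$ is $r$-independent. Applying this with $d_1=a$ and $d_2=\dots=d_{c+1}=b$, a non-vanishing commutator $[L_a,L_b,\dots,L_b]\neq 0$ forces the constant-tail sequence $(a,b,b,\dots,b)$ to be $r$-dependent. So the strategy is to show that only boundedly many — in fact at most $q^{c+1}$ — values of $a$ can make $(a,\underbrace{b,\dots,b}_c)$ $r$-dependent, provided $o(b)$ is large enough; the largeness hypothesis on $o(b)$ is precisely what rules out the degenerate possibility that $a$ does not appear with a nontrivial coefficient in the dependence relation.

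First I would unwind what $r$-dependence of $(a,b,\dots,b)$ means. It says
\[
a + cb \;=\; r^{\alpha}a + (r^{\beta_1}+\dots+r^{\beta_c})b
\]
for exponents $\alpha,\beta_1,\dots,\beta_c\in\{0,1,\dots,q-1\}$ not all zero, i.e.
\[
(1-r^{\alpha})a \;=\; \bigl(r^{\beta_1}+\dots+r^{\beta_c}-c\bigr)b .
\]
If $\alpha\neq 0$, then $1-r^{\alpha}$ is invertible in $\Bbb Z/d\Bbb Z$ for every divisor $d$ of $n$ by \eqref{prim}, hence invertible in $\Bbb Z/n\Bbb Z$, and so $a$ is determined by $b$ and the data $(\alpha,\beta_1,\dots,\beta_c)$; there are at most $q^{c+1}$ such tuples, giving at most $q^{c+1}$ values of $a$. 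The remaining case $\alpha=0$ is the one to exclude. The hard part will be showing that when $o(b)$ exceeds the stated bound $2^{2^{2q-3}-1}c^{2^{2q-3}}$, the case $\alpha=0$ cannot occur for any $a$.

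So suppose $\alpha=0$. Then the relation becomes $(r^{\beta_1}+\dots+r^{\beta_c}-c)b=0$, i.e. $o(b)$ divides $r^{\beta_1}+\dots+r^{\beta_c}-c$, with the $\beta_i\in\{0,\dots,q-1\}$ not all zero. Here I would invoke the two numerical lemmas. Write $g_1(x)=x^{\beta_1}+\dots+x^{\beta_c}-c$ and $g_2(x)=x^q-1$. These are polynomials with integer coefficients; $r$ is a common root of $g_2$ in $\Bbb Z/o(b)\Bbb Z$ (since $r^q=1$ by \eqref{prim}) and, by assumption, a common root of $g_1$ there as well, and $r$ is nonzero mod $o(b)$ because $o(b)>1$. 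They have no common \emph{complex} root: a common complex root would be a $q$th root of unity $\omega$ with $\omega^{\beta_1}+\dots+\omega^{\beta_c}=c$, which by Lemma~\ref{char0} forces $\beta_1=\dots=\beta_c=0$, contradicting that the $\beta_i$ are not all zero. Now apply Lemma~\ref{charp}: the degrees are at most $q-1$ and $q$, so $s+t-1\le 2q-2$, and $M=\max$ of the coefficients is $\max\{c,1\}=c$ (we may assume $c\ge1$). Hence $o(b)\le 2^{2^{2q-3}-1}c^{2^{2q-3}}$, contradicting the hypothesis on $o(b)$. Therefore $\alpha=0$ is impossible, and every value of $a$ with $[L_a,L_b,\dots,L_b]\neq0$ arises from a tuple with $\alpha\neq0$, of which there are at most $q^{c+1}$. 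This completes the proof. The only subtlety to check carefully is the bookkeeping in applying Lemma~\ref{charp} — namely that $s+t-1\le 2q-2$ so that $2^{(s+t-1)}\le 2^{2q-2}$ and the resulting bound sits below the threshold — but this is a routine comparison rather than a genuine obstacle.
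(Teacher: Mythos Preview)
Your argument follows the paper's approach closely, and the overall structure is right: for each $a$ with $[L_a,L_b,\dots,L_b]\ne 0$ the selective $c$-nilpotency condition forces $(a,b,\dots,b)$ to be $r$-dependent; you then split on whether the exponent $\alpha$ attached to $a$ is zero or not, and in the nonzero case count at most $q^{c+1}$ possibilities for $a$. The problem lies in your exclusion of the case $\alpha=0$.

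Your choice $g_2(x)=x^q-1$ does not work. Since $g_1(1)=1^{\beta_1}+\cdots+1^{\beta_c}-c=0$ and $g_2(1)=0$, the value $x=1$ is always a nonzero common complex root of $g_1$ and $g_2$, so the hypothesis of Lemma~\ref{charp} is not satisfied and you cannot apply it. Your appeal to Lemma~\ref{char0} does not rule this out: that lemma is stated for a \emph{primitive} $q$th root of unity $\omega$, whereas $1$ is a root of $x^q-1$ but not a primitive $q$th root. The paper avoids this by first cancelling the terms with $\beta_i=0$ against part of the constant $c$, leaving a relation $(r^{j_1}+\cdots+r^{j_m}-m)b=0$ with all $j_s\in\{1,\dots,q-1\}$, and then taking $g_2(x)=x^{q-1}+\cdots+x+1$, which has $g_2(1)=q\ne 0$.

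This same change also repairs your numerical bookkeeping. With $g_2=x^q-1$ of degree $q$ you get $s+t-1\le 2q-2$, and Lemma~\ref{charp} then yields only $o(b)\le 2^{2^{2q-2}-1}c^{2^{2q-2}}$, which is \emph{larger} than the hypothesis threshold $2^{2^{2q-3}-1}c^{2^{2q-3}}$ and gives no contradiction; so ``the resulting bound sits below the threshold'' is false as stated. Replacing $g_2$ by $x^{q-1}+\cdots+x+1$ drops the degree by one, giving $s+t-1\le 2q-3$ and exactly the bound in the statement.
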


\begin{proof}
Suppose that $[L_a,\underbrace{L_b,\dots,L_b}_{c}]
\neq0$. By \eqref{select} the sequence
$(a,\underbrace{b,\dots, b}_c)$ must be $r$-dependent and so we
have $a+b+\dots+b=r^{i_0}a+r^{i_1}b+\dots+r^{i_c}b$ for suitable
$0\leq i_j\leq q-1,$ where at least once $i_j\neq 0$. First suppose
that $i_0=0$. Then for some $1\leq m\leq c$ and some $1\leq
j_1,\dots,j_m\leq q-1$ we have $(r^{j_1}+\dots+r^{j_m}-m)b=0$,
where the $j_s$ are not necessarily different. Put
$m_0=r^{j_1}+\dots+r^{j_m}-m$ and $n_0=(n,m_0)$. Thus, $o(b)$
divides $n_0$ and $r^{j_1}+\dots+r^{j_m}-m=0$ in $\Bbb Z/n_0\Bbb
Z$. We collect terms re-writing $r^{j_1}+\dots+r^{j_m}-m$ as
$$b_{k_1}r^{k_1}+\dots+b_{k_l}r^{k_l}-m,$$
where $b_{k_i}>0$,
$\sum_{i=1}^l b_{k_i}=m$, and $q> k_1>k_2>\dots >k_l>0$ are all
different. By Lemma~\ref{char0} the polynomials
$X^{q-1}+\dots+X+1$ and $b_{k_1}X^{k_1}+\dots+b_{k_l}X^{k_l}-m$ have no common complex roots.
On the other hand, these polynomials
 have a common non-zero root in $\Bbb Z/n_0\Bbb Z$:
namely, the image of $r$. Therefore, by
Lemma~\ref{charp},
$$n_0\leq 2^{2^{2q-3}-1}m^{2^{2q-3}}\leq 2^{2^{2q-3}-1}c^{2^{2q-3}}.$$
This yields a contradiction since
$n_0\geq o(b)>2^{2^{2q-3}-1}c^{2^{2q-3}}$.

Hence, $i_0\neq0$. In this case $1-r^{i_0}$ is invertible by assumption \eqref{prim}.
Therefore,
$$a=(r^{i_1}b+\dots+r^{i_c}b-cb)/(1-r^{i_0}),$$
so there are at most
$q^{c+1}$ possibilities for $a$. The lemma follows.
\end{proof}

\begin{lemma}\label{l_b}
Suppose that a $(\Z/n\Z)$-graded Lie ring $L$ with $L_0=0$
satisfies the selective $c$-nilpotency condition~\eqref{select}.
There is a $(c,q)$-bounded number $w$ such that
$[L,\underbrace{L_b,\dots,L_b}_{w} ]=0$ whenever $b$ is an element
of $\Bbb{Z}/n\Bbb{Z}$ such that
$o(b)>\max\{2^{2^{2q-3}-1}c^{2^{2q-3}},q^{c+1} \}$.
\end{lemma}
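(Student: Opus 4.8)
The plan is to bootstrap Lemma~\ref{lb} into a genuine Engel-type condition for the component $L_b$. Lemma~\ref{lb} says that if $o(b)$ is large, then there are at most $q^{c+1}$ "bad" indices $a$ for which $[L_a,\underbrace{L_b,\dots,L_b}_{c}]\neq 0$; call this set $\mathcal B$. Additionally, $o(b)>q^{c+1}$ guarantees that $\mathcal B$ cannot be all of $\Bbb Z/n\Bbb Z$, and in fact $\mathcal B$ has $(c,q)$-bounded size. The idea is that a sufficiently long commutator $[L,L_b,\dots,L_b]$ collects, after each block of $c$ further $L_b$'s, an index that either stays in the finite set $\mathcal B$ or else makes the commutator vanish. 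So the only way a long $[L_a,L_b,\dots,L_b]$ can survive is if the running index keeps coming back into $\mathcal B$.

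First I would fix $b$ with $o(b)>\max\{2^{2^{2q-3}-1}c^{2^{2q-3}},q^{c+1}\}$ and let $\mathcal B=\{a_1,\dots,a_s\}$ be the bad set, with $s\le q^{c+1}$ by Lemma~\ref{lb}. Consider an arbitrary homogeneous commutator $[x_a,\underbrace{x_b,\dots,x_b}_{N}]$ and track the sequence of partial indices $a,\ a+b,\ a+2b,\ \dots,\ a+Nb$. Group the $N$ copies of $x_b$ into consecutive blocks of length $c$. If at the start of some block the current index $a+jc\,b$ does not lie in $\mathcal B$, then by definition of $\mathcal B$ the commutator already vanishes after that block, hence vanishes. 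So for a non-vanishing commutator the index $a+jcb$ must lie in $\mathcal B$ for every $j=0,1,\dots,\lfloor N/c\rfloor$. But then, once $N/c$ exceeds $s+1$, the pigeonhole principle forces two of these indices to coincide: $a+j_1cb=a+j_2cb$ with $j_1<j_2$, i.e. $(j_2-j_1)cb=0$, so $o(b)\mid (j_2-j_1)c$. Since $0<(j_2-j_1)c\le (s+1)c$, this gives $o(b)\le (s+1)c\le (q^{c+1}+1)c$, which is a $(c,q)$-bounded quantity --- contradicting the hypothesis that $o(b)$ is larger than the bound in Lemma~\ref{lb} if we additionally require $o(b)$ to exceed $(q^{c+1}+1)c$. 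Hence one can take, say, $w=c\bigl(q^{c+1}+2\bigr)$ (absorbing the constants so that the hypothesis $o(b)>\max\{\dots\}$ already rules out the pigeonhole collision, or else strengthening the max to include the term $(q^{c+1}+1)c$ if needed), and every homogeneous commutator $[x_a,\underbrace{x_b,\dots,x_b}_{w}]$ vanishes; since such commutators span $[L,\underbrace{L_b,\dots,L_b}_{w}]$ by multilinearity and homogeneity, the lemma follows.

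The step I expect to be the main obstacle is the precise bookkeeping at the boundary between blocks: one must make sure that the "index not in $\mathcal B$ forces vanishing" implication is applied to the correct sub-commutator. Concretely, if $[x_{a'},\underbrace{x_b,\dots,x_b}_{c}]\neq 0$ then $a'\in\mathcal B$; applying this with $a'=a+jcb$ means that the inner commutator $[x_a,\underbrace{x_b,\dots,x_b}_{jc}]$ --- call it $y_{a+jcb}$, a homogeneous element of $L_{a+jcb}$ --- satisfies $[y_{a+jcb},\underbrace{x_b,\dots,x_b}_{c}]=0$ as soon as $a+jcb\notin\mathcal B$, which kills the whole length-$N$ commutator. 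The only subtlety is that $y_{a+jcb}$ need not be a single homogeneous element but a $\Bbb Z$-linear combination of such; this is harmless because $[L_{a'},\underbrace{L_b,\dots,L_b}_{c}]=0$ holds for the whole component $L_{a'}$ whenever $a'\notin\mathcal B$, not just for individual elements. Once this is set up cleanly, the counting argument is immediate and the value of $w$ is explicit and $(c,q)$-bounded.
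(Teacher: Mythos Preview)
Your approach is essentially the same as the paper's: use Lemma~\ref{lb} to confine the running index to a set of size at most $q^{c+1}$, then apply pigeonhole to force a collision $a+ib=a+jb$, contradicting the lower bound on $o(b)$.

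The only substantive difference is that you block the copies of $L_b$ into chunks of length $c$ and track only the indices $a+jcb$. This is unnecessary and is precisely what forces you to contemplate ``strengthening the max'' in the hypothesis. The point is that if $[L_a,\underbrace{L_b,\dots,L_b}_{c+t}]\ne 0$, then for \emph{every} $j=0,1,\dots,t$ (not just multiples of $c$) the inner commutator lies in $L_{a+jb}$ and still has at least $c$ further $L_b$'s to its right, so $a+jb\in\mathcal B$ for all such $j$. Now $t+1$ elements live in a set of size at most $q^{c+1}$; taking $t=q^{c+1}$ forces a collision $(j_2-j_1)b=0$ with $0<j_2-j_1\le q^{c+1}$, hence $o(b)\le q^{c+1}$, which contradicts the stated hypothesis as written. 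Thus $w=c+q^{c+1}$ works with no adjustment to the lemma. Drop the blocking and your argument is exactly the paper's.
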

\begin{proof}
Denote by $N(b)$ the set of all $a\in\Bbb{Z}/n\Bbb{Z}$
such that $[L_a,\underbrace{L_b,\dots,L_b}_c
]\neq0$. By
Lemma~\ref{lb}, \ $N=|N(b)|\leq q^{c+1}$. If for some $t\geq 1$ we
have $[L_a,\underbrace{L_b,\dots,L_b}_{c+t}
]\neq0$, then all
elements $a,a+b,a+2b,a+tb$ belong to $N(b)$. It follows that
either $[L_a,\underbrace{L_b,\dots,L_b}_{c+N}
]=0$ or $sb=0$ for
some $s\leq N$. But the latter case with $o(b)\leq N$ is impossible by
the hypothesis $o(b)>q^{c+1}\geq N$. Hence, $[L,\underbrace{L_b,\dots,L_b}_{c+q^{c+1}}
]=0$.
\end{proof}

Recall that for a given $r$-independent sequence
$(a_1,\dots,a_k)$ we denote by $D(a_1,\dots,a_k)$ the set
of all $j\in\Bbb Z/n\Bbb Z$ such that $(a_1,\dots,a_k,j)$ is
$r$-dependent.

Let $(d_1,\dots, d_c)$ be an arbitrary $r$-independent sequence, which we consider as fixed in the next few lemmas.

\begin{lemma}\label{odin}
Suppose that a $(\Z/n\Z)$-graded Lie ring $L$ with $L_0=0$ satisfies
the selective $c$-nilpotency
condition~\eqref{select},
 and let $U=[u_{d_1},\dots,u_{d_c}]$ be
a homogeneous commutator with the $r$-independent sequence of indices
$(d_1,\dots, d_c)$ (under Index Convention). Then every commutator of the form
\begin{equation}\label{eq4}
[U,x_{i_1},\dots,x_{i_t}]
\end{equation}
 can be
written as a linear combination of commutators of the form
\begin{equation}\label{eq5}
[U, m_{j_1},\dots,m_{j_{s}}],
\end{equation}
where $j_k\in D(d_1,\dots,d_c)$ and $s\leq t$. The case $s=t$ is
possible only if $i_k\in D(d_1,\dots, d_c)$ for all
$k=1,\dots,t$.
\end{lemma}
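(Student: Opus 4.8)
The plan is to prove the lemma by a nested induction: an outer induction on the number $t$ of trailing entries, and within it an inner induction on the position of the first index that does not lie in $D(d_1,\dots,d_c)$. The only ingredients are the Jacobi identity and the selective $c$-nilpotency condition~\eqref{select}; the hypothesis $L_0=0$ enters only tacitly, to guarantee that every index at a non-trivial homogeneous element is non-zero, so that each sequence of such indices is unambiguously $r$-dependent or $r$-independent.

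The first thing I would record is the key vanishing fact: if an index $i$ is not in $D(d_1,\dots,d_c)$, then $(d_1,\dots,d_c,i)$ is an $r$-independent sequence of length $c+1$, so~\eqref{select} applied to the homogeneous elements $u_{d_1},\dots,u_{d_c},x_i$ gives $[U,x_i]=[u_{d_1},\dots,u_{d_c},x_i]=0$. Consequently any left-normed commutator $[U,x_i,x_{i_1},\dots]$ whose first trailing entry has index outside $D(d_1,\dots,d_c)$ is zero. In the induction step on $t$, if all of $i_1,\dots,i_t$ lie in $D(d_1,\dots,d_c)$ then~\eqref{eq4} is already of the shape~\eqref{eq5} with $s=t$ and nothing is required.

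Otherwise let $k$ be the least index with $i_k\notin D(d_1,\dots,d_c)$, and run the inner induction on $k$. If $k=1$ the commutator vanishes by the fact above. If $k>1$, apply the Jacobi identity, in the form $[[A,B],C]=[[A,C],B]+[A,[B,C]]$ with $A=[U,x_{i_1},\dots,x_{i_{k-2}}]$, to the two entries in positions $k-1$ and $k$, carrying along the subsequent commutations with $x_{i_{k+1}},\dots,x_{i_t}$, to write
\begin{multline*}
[U,x_{i_1},\dots,x_{i_t}]=[U,x_{i_1},\dots,x_{i_{k-2}},x_{i_k},x_{i_{k-1}},x_{i_{k+1}},\dots,x_{i_t}]\\
{}+[U,x_{i_1},\dots,x_{i_{k-2}},[x_{i_{k-1}},x_{i_k}],x_{i_{k+1}},\dots,x_{i_t}].
\end{multline*}
The second summand is again of type~\eqref{eq4} but with only $t-1$ trailing homogeneous entries, the new entry being the homogeneous element $[x_{i_{k-1}},x_{i_k}]$ (which is $0$ unless $i_{k-1}+i_k\neq 0$), so the outer induction hypothesis rewrites it as a linear combination of commutators~\eqref{eq5} with $s\leq t-1$. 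In the first summand the forbidden index $i_k$ has moved to position $k-1$, with every earlier index still in $D(d_1,\dots,d_c)$ by minimality of $k$, so the inner induction hypothesis applies. Assembling the pieces expresses $[U,x_{i_1},\dots,x_{i_t}]$ as a linear combination of commutators~\eqref{eq5} with $s\leq t-1$; in particular $s=t$ can occur only in the first, all-indices-good, case, which gives the last assertion.

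I do not anticipate a genuine obstacle, since this is essentially a controlled commutator-collection process. The point that needs care is the bookkeeping for the nested induction — checking that each Jacobi step either strictly lowers the number of trailing entries (passing to the second summand) or strictly lowers the position of the first forbidden index (passing to the first summand), so that the recursion terminates — together with the routine verification that the merged entry $[x_{i_{k-1}},x_{i_k}]$ is again a legitimate homogeneous element, the hypothesis $L_0=0$ disposing of the degenerate case $i_{k-1}+i_k=0$.
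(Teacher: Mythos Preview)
Your proof is correct and follows essentially the same approach as the paper's own proof: a double induction on $t$ and on the position $k$ of the first index outside $D(d_1,\dots,d_c)$, using the Jacobi identity to swap $x_{i_{k-1}}$ and $x_{i_k}$ and invoking the selective $c$-nilpotency condition to kill the $k=1$ case. Your write-up is in fact slightly more explicit about why the case $s=t$ only arises when all indices lie in $D(d_1,\dots,d_c)$.
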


\begin{proof}
The assertion is obviously true for $t=0$. Let $t=1$.
If $i_1\in D(d_1,\dots,d_c)$, then $[U,x_{i_1}]$ is of the
required form. If $i_1\notin D(d_1,\dots,d_c)$, then
$[U,x_{i_1}]=0$ by \eqref{select} and there is nothing to prove.

Let us assume that $t>1$ and use induction on $t$. If all the
indices $i_j$ belong to $D(d_1,\dots,d_c)$, then the commutator
$[U,x_{i_1},\dots,x_{i_t}] $ is of the required form with $s=t$.
Suppose that in \eqref{eq4} there is an element $x_{i_k}$ with the index
$i_k$ that does not belong to $D(d_1,\dots,d_c)$. We choose such
an element with $k$ as small as possible and use $k$ as a second
induction parameter.

If $k=1$, then the commutator \eqref{eq4} is zero and we are done. Suppose that
$k\geq 2$ and write
$$[U,\dots,x_{i_{k-1}},x_{i_k},\dots,x_{i_t}]=[U,\dots,x_{i_k},x_{i_{k-1}},\dots,x_{i_t}]+
[U,\dots,[x_{i_{k-1}},x_{i_k}],\dots,x_{i_t}].$$
By the induction hypothesis the commutator
$[U,\dots,[x_{i_{k-1}},x_{i_k}],\dots,x_{i_t}]$ is a linear
combination of commutators of the form \eqref{eq5} because it is shorter
than \eqref{eq4}, while the commutator
$[U,\dots,x_{i_k},x_{i_{k-1}},\dots,x_{i_t}]$ is a linear
combination of commutators of the form \eqref{eq5} because the index that
does not belong to $D(d_1,\dots,d_c)$ here occurs closer to $U$
than in \eqref{eq4}.
\end{proof}

\begin{corollary}\label{tri}
Let $L$ and $U$ be as in Lemma~\ref{odin}. Then the ideal of $L$ generated by $U$ is
spanned by commutators of the form \eqref{eq5}.
\end{corollary}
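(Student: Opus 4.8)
The plan is to bootstrap from Lemma~\ref{odin}, which already controls commutators of the form $[U,x_{i_1},\dots,x_{i_t}]$ where $U=[u_{d_1},\dots,u_{d_c}]$ is a fixed homogeneous commutator in the $r$-independent indices $(d_1,\dots,d_c)$. The ideal ${}_{\rm id}\!\left<U\right>$ is by definition the span of all commutators that contain $U$ as a subproduct, together with $U$ itself; since $L=\langle L_1\rangle$ is not assumed here, I should instead recall the general description of ${}_{\rm id}\!\left<U\right>$ as the span of left-normed commutators $[U,y_1,\dots,y_t]$ with $y_i\in L$ (this follows from the Jacobi identity: the additive subgroup spanned by all such left-normed brackets starting with $U$ is already an ideal). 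Because $L$ is $(\Bbb Z/n\Bbb Z)$-graded, every such commutator is a $\Bbb Z$-linear combination of \emph{homogeneous} commutators $[U,x_{i_1},\dots,x_{i_t}]$ obtained by expanding each $y_j$ into its homogeneous components. Hence ${}_{\rm id}\!\left<U\right>$ is spanned by commutators of the shape \eqref{eq4}.

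Now I would simply apply Lemma~\ref{odin} termwise: each homogeneous commutator $[U,x_{i_1},\dots,x_{i_t}]$ is, by that lemma, a linear combination of commutators $[U,m_{j_1},\dots,m_{j_s}]$ with every $j_k\in D(d_1,\dots,d_c)$ and $s\le t$ (including the degenerate cases $s=0$, giving a multiple of $U$ itself, and $t=0$). Therefore every spanning element of ${}_{\rm id}\!\left<U\right>$ is a linear combination of commutators of the form \eqref{eq5}, and conversely every commutator of the form \eqref{eq5} lies in ${}_{\rm id}\!\left<U\right>$ since it contains $U$. This gives exactly the claimed description.

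The only point requiring a little care — and the step I expect to be the mildest obstacle — is the reduction from ``ideal generated by $U$'' to ``span of left-normed commutators headed by $U$''. One must check that the additive span $J$ of all $[U,y_1,\dots,y_t]$ ($t\ge 0$, $y_i\in L$) is an ideal, i.e.\ closed under bracketing with arbitrary $z\in L$. Closure under right multiplication by $z$ is immediate (append $z$). For left multiplication, $[z,[U,y_1,\dots,y_t]]=-[[U,y_1,\dots,y_t],z]$, so this is again in $J$. Then one observes $U\in J$ (take $t=0$) and that $J$ is contained in any ideal containing $U$, so $J={}_{\rm id}\!\left<U\right>$. After that, homogeneity of the grading lets us restrict to homogeneous $y_i$ without loss, and Lemma~\ref{odin} finishes the job. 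The whole argument is short; the substance was already done in Lemma~\ref{odin}, and this corollary is essentially a repackaging.
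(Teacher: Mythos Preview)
Your proposal is correct and follows exactly the route the paper intends: the paper gives no separate proof of this corollary, treating it as immediate from Lemma~\ref{odin}, and your argument spells out precisely that deduction. Your extra care in verifying that the span of left-normed commutators $[U,y_1,\dots,y_t]$ is already an ideal (hence equals ${}_{\rm id}\langle U\rangle$) is a useful elaboration of what the paper leaves implicit.
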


In the next lemma we obtain a rather detailed information about
the ideal generated by $U$. Basically it says that  ${}_{\rm id}\langle U\rangle $ is generated
by commutators of the form \eqref{eq5} in which right after $U$ there are boundedly
many elements with indices in $D(d_1,\dots, d_c)$ followed only by elements with indices of small
additive order in $\Z /n \Z$.

\begin{notation}
From now on we fix the $(c,q)$-bounded number
$N(c,q)=\max\{2^{2^{2q-3}-1}c^{2^{2q-3}},q^{c+1}\}$, which appears
in Lemma~\ref{l_b}.

For our fixed $r$-independent sequence $(d_1,\dots, d_c)$,
we denote by $A$ the set of all $b\in D(d_1,\dots, d_c)$ such that
$o(b)> N(c,q)$ and by $B$ the
set of all $b\in D(d_1,\dots, d_c)$ such that
$o(b)\leq N(c,q)$. Let
$D=|D(d_1,\dots,d_c)|$ and let $w$
be the number given by Lemma~\ref{l_b}.
\end{notation}

\begin{lemma}\label{dva}
Let $L$ and $U$ be as in Lemma~\ref{odin}. The ideal of $L$ generated by $U$
 is spanned by
commutators of the form
\begin{equation}\label{eq6}
[U, m_{i_1},\dots,m_{i_u},m_{i_{u+1}},\dots,m_{i_{v}}],
\end{equation}
where $u\leq (w-1)D$, with $i_k\in D(d_1,\dots, d_c)$ for
$k\leq u$, and $i_k\in B$ for $k>u$.
\end{lemma}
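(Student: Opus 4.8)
The plan is to start from Corollary~\ref{tri}, which tells us that ${}_{\rm id}\langle U\rangle$ is spanned by commutators of the form $[U,m_{j_1},\dots,m_{j_s}]$ with all $j_k\in D(d_1,\dots,d_c)$, and then to push all the ``long-order'' indices (those in $A$) to the right and absorb them, leaving only a bounded initial segment of $D(d_1,\dots,d_c)$-indices followed by a tail of $B$-indices. Concretely, I would argue by induction on the number of indices lying in $A$, and for a fixed such number, by a secondary induction on the position of the rightmost $A$-index that is not already at the far right. Given a commutator $[U,m_{j_1},\dots,m_{j_s}]$ with some $j_k\in A$ that has a non-$A$ index $j_{k+1}$ immediately to its right, I would use the Jacobi/Leibniz identity to write
\[
[U,\dots,m_{j_k},m_{j_{k+1}},\dots]=[U,\dots,m_{j_{k+1}},m_{j_k},\dots]+[U,\dots,[m_{j_k},m_{j_{k+1}}],\dots],
\]
where in the first term the $A$-index has moved one step to the right, and in the second term the two indices have merged into one of index $j_k+j_{k+1}$. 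Here one must check that $j_k+j_{k+1}$ still lies in $D(d_1,\dots,d_c)$; this follows because $[U,m_{j_k},m_{j_{k+1}}]$ would be zero by \eqref{select} unless $(d_1,\dots,d_c,j_k,j_{k+1})$ — and hence $(d_1,\dots,d_c,j_k+j_{k+1})$ after collecting — is $r$-dependent, so we may discard any term in which it is not. Iterating, every $A$-index can be shuffled to the right past all $B$-indices, possibly fusing with neighbours along the way.

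Once all $A$-indices occupy a terminal block of the commutator, I would invoke Lemma~\ref{l_b}: since each $b\in A$ satisfies $o(b)>N(c,q)\geq\max\{2^{2^{2q-3}-1}c^{2^{2q-3}},q^{c+1}\}$, we have $[L,\underbrace{L_b,\dots,L_b}_{w}]=0$, so at most $w-1$ consecutive entries with the same index $b\in A$ can survive before the commutator vanishes. But the $A$-indices in the terminal block need not be equal; to handle that, I would note that after shuffling, any entry $m_{i_k}$ with $i_k\in A$ sitting at position beyond the first $u$ positions can itself be moved rightward past all the $B$-entries in the same way, so we may assume the terminal $A$-block is actually at the very end. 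Then, grouping that terminal $A$-block by distinct index value — there are at most $D=|D(d_1,\dots,d_c)|$ possible values — and using Lemma~\ref{l_b} for each, a commutator with more than $(w-1)D$ entries in the terminal $A$-block is forced to be zero (some value repeats $w$ times consecutively after a further reordering using that $K/C_L(M)$-type abelian-on-$M$ argument — here simply that $[U,x,y]=[U,y,x]$ is \emph{not} available in general, so instead one repeats the shuffle to cluster equal indices). Hence in a surviving commutator the non-$B$ part has length $u\leq(w-1)D$, and everything after position $u$ has index in $B$, which is exactly the form \eqref{eq6}.

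The main obstacle I anticipate is the bookkeeping in the clustering step: after moving all $A$-indices into a terminal block we still need equal indices to be adjacent before Lemma~\ref{l_b} applies, and the naive reordering of two $A$-entries produces an extra term $[U,\dots,[m_{j},m_{j'}],\dots]$ whose new index $j+j'$ might fall outside $A$ (it could land in $B$ or fail to be in $D(d_1,\dots,d_c)$ altogether). The resolution is that such a term is either zero by \eqref{select} or has strictly fewer $A$-indices, so it is already covered by the outer induction hypothesis; thus only the ``clean swap'' term needs to be tracked, and the induction on the number of $A$-indices closes. Careful ordering of the three nested inductions — on the number of $A$-indices, then on the position of the leftmost misplaced $A$-index, then on commutator length — is what makes the argument go through, and I would present the final count $u\le (w-1)D$ as the combination of the $w$-bound per index value from Lemma~\ref{l_b} with the at-most-$D$ available values.
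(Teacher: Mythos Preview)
Your plan has two genuine gaps, and the simpler device the paper uses avoids both.

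First, the direction of your shuffle is backwards for the stated form~\eqref{eq6}. You push the $A$-indices to a \emph{terminal} block and then assert ``everything after position $u$ has index in~$B$''. But if the $A$-block is terminal, the tail consists of $A$-indices, not $B$-indices, and since $A$ and $B$ are disjoint this is not the form~\eqref{eq6}. What one actually needs is to move the $A$-indices to the \emph{front}, right after~$U$: then the initial segment (of length at most $(w-1)D$) has indices in $A\subseteq D(d_1,\dots,d_c)$ and the remainder has indices in $D(d_1,\dots,d_c)\setminus A=B$, exactly matching~\eqref{eq6}.

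Second, your outer induction on the number of $A$-indices does not close. When you swap two adjacent entries you get the correction term $[U,\dots,[m_{j_k},m_{j_{k+1}}],\dots]$ with merged index $j_k+j_{k+1}$. Your claim that this merged index must lie in $D(d_1,\dots,d_c)$ is not justified: condition~\eqref{select} concerns commutators of length $c+1$ only, so it says nothing directly about $[U,m_{j_k},m_{j_{k+1}}]$ (which has $c+2$ entries), and the implication ``$(d_1,\dots,d_c,j_k,j_{k+1})$ $r$-dependent $\Rightarrow$ $(d_1,\dots,d_c,j_k+j_{k+1})$ $r$-dependent'' fails when the two exponents on $j_k$ and $j_{k+1}$ differ. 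When $j_k+j_{k+1}\notin D(d_1,\dots,d_c)$ one must invoke Lemma~\ref{odin} instead, which rewrites the correction term as a linear combination of \emph{shorter} commutators of the form~\eqref{eq5}; but those shorter commutators can have any number of $A$-indices, so your induction parameter need not decrease.

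The fix is to replace your induction on the number of $A$-indices by induction on the total length~$s$. Then every correction term is strictly shorter (either directly of form~\eqref{eq5} if the merged index lies in $D(d_1,\dots,d_c)$, or, via Lemma~\ref{odin}, a combination of yet shorter such commutators), hence lies in the span $R$ of commutators~\eqref{eq6} by the induction hypothesis. This shows that modulo~$R$ the entries $m_{j_k}$ can be permuted \emph{freely}. With that in hand the argument is immediate: if some index $b\in A$ occurs at least $w$ times, cluster those entries and apply Lemma~\ref{l_b} to get~$0$; otherwise each of the at most $D$ values in $A$ occurs fewer than $w$ times, so moving all $A$-entries to the front gives an initial segment of length at most $(w-1)D$ followed by a $B$-tail.
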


\begin{proof}
Let $R$ be the span of all commutators of the form
\eqref{eq6}. It is sufficient to show that every commutator $W=[U,m_{j_1},\dots,m_{j_{s}}]$ of the form \eqref{eq5}
belongs to $R$. If $s\leq
(w-1)D$, it is clear that $W\in R$, so we assume that $s>(w-1)D$
and use induction on $s$. Write
\begin{equation}\label{eq61}
W= [U,m_{j_1},\dots,m_{j_t},
m_{j_{t-1}},\dots,m_{j_s}]+
[U,m_{j_1},\dots,[m_{j_{t-1}},m_{j_{t}}],\dots, m_{j_s}].
\end{equation}
If
$j_{t-1}+ j_{t}\in D(d_1,\dots,d_c)$, then the second summand is
of the form \eqref{eq5}.
Since it is shorter than $W$, it belongs to $R$
by the induction hypothesis. If $j_{t-1}+j_{t}\notin
D(d_1,\dots,d_c)$, then by Lemma~\ref{odin} the
second summand is a linear combination of
commutators of the form \eqref{eq5} each of which is shorter than $W$.

Thus, in either case the second summand in \eqref{eq61} belongs to
$R$. It follows that the commutator $W$ does not change modulo $R$
under any permutation of the $m_{j_k}$. If among the $m_{j_k}$
there are at least $w$ elements with the same index $j_k\in A$, we
move these elements next to each other. Then it follows from
Lemma~\ref{l_b} that $W=0$. Suppose now that all the indices
$j_k\in A$ occur less than $w$ times. We place all these elements
right after the $U$. This initial segment has length at most
$D(w-1)+c$, so the resulting commutator takes the required
form~\eqref{eq6}.
\end{proof}

\begin{corollary}\label{malocomp}
Suppose that a $(\Z/n\Z)$-graded Lie ring $L$ with $L_0=0$ satisfies
the selective $c$-nilpotency
condition~\eqref{select},
and let $(d_1,\dots, d_c)$ be an $r$-independent sequence.
Then the ideal
$_{\rm
id}\langle [L_{d_1},\dots, L_{d_c}]\rangle$ has
$(c,q)$-boundedly many non-trivial components of the induced
grading.
\end{corollary}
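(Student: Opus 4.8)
The plan is to read the result straight off Lemma~\ref{dva}, keeping track of which grading components can be hit by the spanning commutators produced there. First I would observe that ${}_{\rm id}\langle [L_{d_1},\dots,L_{d_c}]\rangle$ is the sum of the ideals ${}_{\rm id}\langle U\rangle$ as $U=[u_{d_1},\dots,u_{d_c}]$ runs over all homogeneous commutators with the fixed $r$-independent index sequence $(d_1,\dots,d_c)$ (the ideal generated by a span equals the sum of the ideals generated by its spanning elements). So it suffices to bound, uniformly in $U$, the number of components meeting ${}_{\rm id}\langle U\rangle$. By Lemma~\ref{dva}, ${}_{\rm id}\langle U\rangle$ is spanned by commutators
\[
[U,m_{i_1},\dots,m_{i_u},m_{i_{u+1}},\dots,m_{i_v}]
\]
with $u\leq (w-1)D$, with $i_k\in D(d_1,\dots,d_c)$ for $k\leq u$, and $i_k\in B$ for $k>u$. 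Under the Index Convention such a commutator lies in the component $L_s$ with
\[
s=(d_1+\dots+d_c)+(i_1+\dots+i_u)+(i_{u+1}+\dots+i_v),
\]
so I only need to bound the number of values the three summands on the right can take.

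The first summand $d_1+\dots+d_c$ is fixed. For the second: each $i_k$ with $k\leq u$ lies in $D(d_1,\dots,d_c)$, which by Lemma~\ref{115} has $D\leq q^{c+1}$ elements, and there are at most $u\leq(w-1)D$ of them, with $w$ a $(c,q)$-bounded number by Lemma~\ref{l_b}; hence $i_1+\dots+i_u$ is a sum of at most $(w-1)D$ elements drawn from a $D$-element set and therefore takes at most $(c,q)$-boundedly many values. For the third: every index $i_k$ with $k>u$ lies in $B$, and by the definition of $B$ every $b\in B$ has additive order $o(b)\leq N(c,q)$, where $N(c,q)$ is a fixed $(c,q)$-bounded number. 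Since $\Z/n\Z$ is cyclic, the subgroup $\langle B\rangle$ it generates is cyclic of order $\operatorname{lcm}\{o(b):b\in B\}$, which divides $\prod_{b\in B}o(b)\leq N(c,q)^{|B|}\leq N(c,q)^{q^{c+1}}$ and is thus $(c,q)$-bounded. As $i_{u+1}+\dots+i_v\in\langle B\rangle$, this summand also takes at most $(c,q)$-boundedly many values.

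Multiplying the three counts, the index $s$ ranges over a set of $(c,q)$-bounded cardinality, and — crucially — this bound does not depend on $U$ or on $n$. Hence ${}_{\rm id}\langle [L_{d_1},\dots,L_{d_c}]\rangle$, being the sum of the ${}_{\rm id}\langle U\rangle$, is contained in the sum of $(c,q)$-boundedly many grading components, which is exactly the assertion. The only point requiring genuine care — and the reason the bound is independent of $n$ despite the grading group $\Z/n\Z$ being unbounded — is the observation that the "tail" indices are confined to the bounded-order set $B$, so that their contribution to $s$ lives in a subgroup of $\Z/n\Z$ of $(c,q)$-bounded order rather than ranging over all of $\Z/n\Z$; the rest is bookkeeping on top of the already-established Lemma~\ref{dva}.
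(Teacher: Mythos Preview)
Your proof is correct and follows essentially the same approach as the paper: both read the result off Lemma~\ref{dva}, split the grading index of a spanning commutator into the fixed contribution from $U$, the ``head'' contribution from at most $(w-1)D$ indices in $D(d_1,\dots,d_c)$, and the ``tail'' contribution lying in the subgroup $\langle B\rangle$ of $(c,q)$-bounded order, and then observe that the resulting set of possible indices depends only on $(d_1,\dots,d_c)$ and not on the particular~$U$. Your lcm argument for bounding $|\langle B\rangle|$ is a slightly more explicit version of the paper's one-line remark that $Y=|\langle B\rangle|$ is $(c,q)$-bounded because $B$ has boundedly many elements each of bounded order.
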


\begin{proof}
Let $U=[u_{d_1},\dots,u_{d_c}]$ be
an arbitrary homogeneous commutator with the given indices
(under Index Convention). Since in \eqref{eq6} we have ${i_k}\in D(d_1,\dots, d_c)$ for all
$k=1,\dots,u$, the sum of all indices of the initial segment
$[U, m_{i_1}, \dots, m_{i_u}]$ can take at most $D^{u}$
values. Denote by $Y$ the order of $\langle B\rangle$, the
subgroup of $\Bbb Z/n\Bbb Z$ generated by $B$. Clearly, the sum of the remaining indices in \eqref{eq6}
belongs to $\langle B\rangle$. It follows that the
sum of all indices in \eqref{eq6} can take at most $D^{u}Y$ values.
Since the order of every element in $B$ is $(c,q)$-bounded and
there are only $(c,q)$-boundedly many elements in $B$, it
follows that $Y$ is $(c,q)$-bounded. By Lemmas~\ref{115}, \ref{l_b}, and \ref{dva}
the number $u$ is also $(c,q)$-bounded. So
$_{\rm
id}\langle U\rangle$ has $(c,q)$-boundedly many non-trivial
components.

It follows from the proofs of Lemmas~\ref{odin} and \ref{dva}
that the set of indices of
all possible non-trivial
components in $_{\rm
id}\langle U\rangle$ is completely determined by
the tuple $(d_1,\dots,d_c)$ and does not depend on the choice of
$U=[u_{d_1},\dots,u_{d_c}]$. Since the ideal
$_{\rm
id}\langle[L_{d_1},\dots,L_{d_c}]\rangle$ is the sum of ideals
$_{\rm
id}\langle[u_{d_1},\dots,u_{d_c}]\rangle$ over all possible
$u_{d_1},\dots,u_{d_c}$, the result follows.
\end{proof}

\begin{lemma}\label{pyat}
Suppose that a homogeneous ideal $T$ of a Lie ring $L$
has only $e$ non-trivial components. Then $L$ has at most $e^2$
components that do not centralize $T$.
\end{lemma}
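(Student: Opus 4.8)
This is a short index-counting argument, using only the defining property of a graded ideal, namely that $[L_a,T_i]\subseteq T_{a+i}$ for all grading indices $a,i$. Write the grading group additively as $A$ and let
$$S=\{\,i\in A\mid T_i\neq 0\,\},$$
so that by hypothesis $|S|=e$. The plan is to show that every index $a$ for which $L_a$ fails to centralize $T$ lies in the difference set $S-S=\{\,s-s'\mid s,s'\in S\,\}$, which has at most $e^2$ elements.

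\textbf{Key step.} Suppose $[L_a,T]\neq 0$. Since $T$ is homogeneous, $T=\bigoplus_{i\in S}T_i$, so $[L_a,T]=\sum_{i\in S}[L_a,T_i]$, and hence there is some $i\in S$ with $[L_a,T_i]\neq 0$. Because $T$ is an ideal and the grading is respected, $[L_a,T_i]\subseteq L_{a+i}\cap T=T_{a+i}$; as this is non-zero we conclude $a+i\in S$. Together with $i\in S$ this gives $a=(a+i)-i\in S-S$. Thus the set of indices $a$ with $[L_a,T]\neq 0$ is contained in $S-S$, and $|S-S|\leq |S|^2=e^2$, which is the assertion.

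\textbf{Main obstacle.} There is essentially no obstacle here: the only thing to be careful about is making explicit that homogeneity of $T$ lets one detect $[L_a,T]\neq 0$ on a single component $T_i$, and that the ideal property forces the product into the single component $T_{a+i}$ — both of which are immediate from the definitions recalled at the start of \S\,3 and \S\,4. I would keep the write-up to a few lines.
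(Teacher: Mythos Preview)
Your proof is correct and is essentially identical to the paper's own argument: both observe that if $[L_a,T]\ne 0$ then $a+i\in S$ for some $i\in S$, so $a$ lies in the difference set $S-S$ of size at most $e^2$. The only difference is cosmetic---you phrase it via $S-S$ while the paper counts pairs $(i,j)\in S\times S$ directly.
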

\begin{proof}
Let $T_{i_1},\dots,T_{i_e}$ be the non-trivial homogeneous
components of $T$ and let $S=\{i_1,\dots,i_e\}$. Suppose that
$L_i$ does not centralize $T$. Then for some $j\in S$ we have
$i+j\in S$. So there are at most $|S|\times|S|$ possibilities for
$i$, as required.
\end{proof}

\begin{proposition}\label{razresh}
Suppose that a $(\Z/n\Z)$-graded Lie ring $L$ with $L_0=0$ satisfies the selective $c$-nilpotency
condition~\eqref{select}. Then $L$ is solvable of $(c,q)$-bounded derived length $f(c,q)$.
\end{proposition}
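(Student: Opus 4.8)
The plan is to bound the derived length of $L$ by combining Corollary~\ref{malocomp} with Lemma~\ref{pyat} and an induction. First I would observe that since $L_0=0$, every nonzero homogeneous element has a nonzero index, so any $c$-tuple of such indices is either $r$-dependent or $r$-independent. Using Lemma~\ref{rigid} (with $m=c$), I would argue that if a Lie ring $K$ with these properties is not already nilpotent of small class, then it must contain a nonzero homogeneous commutator $U=[u_{d_1},\dots,u_{d_c}]$ with $(d_1,\dots,d_c)$ an $r$-independent sequence: indeed, by \eqref{select} only $r$-dependent $c$-tuples of indices can give nonzero products, and if the set of all indices occurring among nonzero components were small—fewer than $q^c+c$ values—then $L$ would have $(c,q)$-boundedly many nonzero components and hence be nilpotent, even solvable, of $(c,q)$-bounded length by a direct counting argument (a homogeneous commutator of length exceeding the number of components must repeat a "partial sum" of indices and, using that indices then sum to $0$, one derives nilpotency—or one simply notes a $(\Z/n\Z)$-graded Lie ring with $e$ nontrivial components is nilpotent of class $\le$ some function of $e$).

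Granting such a $U$, Corollary~\ref{malocomp} tells us that the ideal $I={}_{\rm id}\langle[L_{d_1},\dots,L_{d_c}]\rangle$ has only $(c,q)$-boundedly many nontrivial components, say $e=e(c,q)$ of them. By Lemma~\ref{pyat}, at most $e^2$ components of $L$ fail to centralize $I$, so the homogeneous ideal $J$ generated by those at most $e^2$ components, together with $I$ itself, has $(c,q)$-boundedly many nontrivial components. The quotient $L/C_L(I)$ is therefore a $(\Z/n\Z)$-graded Lie ring with $(c,q)$-boundedly many nontrivial components, hence solvable (indeed nilpotent) of $(c,q)$-bounded derived length. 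Meanwhile $I$ and $C_L(I)$ are themselves $(\Z/n\Z)$-graded, with $I$ abelian-by-(bounded-length) or at least inheriting a weaker version of the selective nilpotency structure on fewer components, and we induct on the number of occurring indices (or on some similar well-founded parameter).

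The cleanest way to organize the induction, which is what I would actually write, is: induct on the number of nontrivial grading components of $L$. If that number is $(c,q)$-bounded we are done by the counting remark above. Otherwise the set of occurring indices has more than $q^c+c$ values, so Lemma~\ref{rigid} produces an $r$-independent $(d_1,\dots,d_c)$ with $[L_{d_1},\dots,L_{d_c}]\neq0$; set $I={}_{\rm id}\langle[L_{d_1},\dots,L_{d_c}]\rangle$, which by Corollary~\ref{malocomp} has boundedly many nontrivial components. Then $L/C_L(I)$ has boundedly many nontrivial components (by Lemma~\ref{pyat}) and so has bounded derived length; and $C_L(I)$ is a homogeneous ideal, a $(\Z/n\Z)$-graded Lie ring with $C_L(I)_0=0$ satisfying \eqref{select}, having strictly fewer nontrivial components than $L$ (it misses, at the very least, all the components of $I$ that lie inside it—here one has to check that $I\not\le C_L(I)$, i.e. that $I$ is non-abelian, or else pass to $I\cap C_L(I)$ and handle the abelian part trivially). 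By induction $C_L(I)$ has $(c,q)$-bounded derived length, and then $L$ does too, with an explicit bound obtained by adding the two.

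The main obstacle I anticipate is making the induction parameter decrease cleanly: one needs $C_L(I)$ (or whichever ideal one passes to) to genuinely have fewer occurring indices than $L$, and one needs the base case—a graded Lie ring with boundedly many nontrivial components is solvable of bounded derived length—stated and proved carefully, since the whole argument rests on it. A secondary technical point is bookkeeping: the bounds coming out of Corollary~\ref{malocomp} and Lemma~\ref{pyat} must be assembled into a single function $f(c,q)$, and one must be careful that the recursion depth (the number of times we strip off a centralizer) is itself $(c,q)$-bounded—this follows because each step removes at least one index, and the number of indices, though not a priori bounded, decreases by at least one at each of boundedly many levels... actually the number of levels equals the number of distinct indices, which is not bounded, so the recursion must instead be set up so that the derived length, not the number of indices, is what gets controlled at each level: $L^{(g)}\le$ (a bounded number of layers each of bounded length), giving $f(c,q)$ as a finite sum of bounded terms regardless of how many indices occur.
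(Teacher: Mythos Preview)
Your proposal has a genuine gap, and you have already put your finger on it in the last paragraph: an induction on the number of nontrivial grading components cannot yield a $(c,q)$-bounded derived length, because that number can be as large as $n-1$. Each pass to $C_L(I)$ removes only $(c,q)$-boundedly many components, so the recursion depth is of order $n$, and the derived length you obtain is a function of $n$, not of $(c,q)$. Your final sentence acknowledges this but does not repair it; there is no way to ``control the derived length at each level'' with this scheme that avoids accumulating $n$ many layers. (A side remark: your base case is also misstated --- a $(\Bbb Z/n\Bbb Z)$-graded Lie ring with $L_0=0$ and $e$ nontrivial components is solvable of $e$-bounded derived length by Shalev's generalization of Kreknin's theorem, but it need not be nilpotent.)

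The missing idea is to induct on $c$ rather than on the number of components. Define $I$ to be the ideal generated by \emph{all} commutators $[L_{i_1},\dots,L_{i_c}]$ as $(i_1,\dots,i_c)$ ranges over \emph{every} $r$-independent sequence of length~$c$. Then $L/I$ satisfies the selective $(c-1)$-nilpotency condition, so by induction $L^{(f_0)}\le I$ for some $(c,q)$-bounded $f_0$. Now your own argument kicks in: for each single $r$-independent tuple, the ideal $T={}_{\rm id}\langle[L_{i_1},\dots,L_{i_c}]\rangle$ has $(c,q)$-boundedly many components (Corollary~\ref{malocomp}), so $L/C_L(T)$ has $(c,q)$-boundedly many components (Lemma~\ref{pyat}) and is solvable of $(c,q)$-bounded derived length $f_1$ by Shalev's theorem. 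Crucially, $f_1$ does not depend on the particular tuple, so $L^{(f_1)}$ centralizes every such $T$ and hence all of $I$. Combining $[L^{(f_1)},I]=0$ with $L^{(f_0)}\le I$ gives $[L^{(f_1)},L^{(f_0)}]=0$, whence derived length at most $\max\{f_0,f_1\}+1$. The recursion depth is now exactly $c$, which is what makes the bound $(c,q)$-bounded.
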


Note that $L$ is solvable of $n$-bounded derived length by Kreknin's theorem, but we need a bound for the derived
length in terms of $c$ and $q$.

\begin{proof}
We use induction on $c$. If $c=0$, then $L=0$ and there is nothing to prove. Indeed,
$L_0=0$ by hypothesis, and for $d\ne 0$ we have
$L_d =0$ by \eqref{select}, since any element $d\ne 0$ is $r$-independent by Remark~\ref{remark}.

Now let $c\geq 1$. Let $I$ be the
ideal of $L$ generated by all commutators
$[L_{i_1},\dots,L_{i_c}]$, where $(i_1,\dots,i_c)$ ranges
through all $r$-independent sequences of length~$c$. The induced $(\Z/n\Z)$-grading
of $L/I$ has trivial zero-component and $L/I$ satisfies the selective
$(c-1)$-nilpotency condition.
By the induction hypothesis $L/I$ is
solvable of bounded derived length, say, $f_0$, that is,
$L^{(f_0)}\leq I$.

Consider an arbitrary
$r$-independent sequence $(i_1,\dots,i_c)$ and the ideal
$T={}_{\rm
id}\langle[L_{i_1},\dots,L_{i_c}]\rangle$. We know from
Corollary \ref{malocomp} that there are only $(c,q)$-boundedly
many, say, $e$, non-trivial grading components in $T$. By Lemma
\ref{pyat} there are at most $e^2$ components that do not
centralize $T$. Since $C_L(T)$ is also a homogeneous ideal, it
follows that the quotient $L/C_L(T)$ has at most $e^2$ non-trivial
components. Since the induced $(\Z/n\Z)$-grading of $L/C_L(T)$ also has
trivial zero component,
by Shalev's generalization~\cite{shalev} of Kreknin's theorem we conclude that
$L/C_L(T)$ is solvable of $e$-bounded derived length, say, $f_1$.
Therefore $L^{(f_1)}$, the corresponding term of the derived
series, centralizes $T$. Since $f_1$ does not depend on the choice
of the $r$-independent tuple $(i_1,\dots,i_c)$ and $I$ is the sum of all such ideals $T$,
it follows that $[L^{(f_1)},I]=0$. Recall that $L^{(f_0)}\leq I$. Hence,
$[L^{(f_1)},L^{(f_0)}]=0$. Thus $L$ is solvable
of $(c,q)$-bounded derived length at most $\max\{f_0,f_1\}+1$.
\end{proof}

\subsubsection*{Combinatorial corollary}
We now state a combinatorial corollary of Lemma~\ref{l_b}
and Proposition~\ref{razresh} that we shall need in the next
section for dealing with non-semisimple automorphisms, when eigenspaces do not form a direct sum.

We use the following notation:
$$\delta_1=[x_1, x_2], \qquad \delta_{k+1}=[\delta_k(x_1, \dots,
x_{2^k}),\,\, \delta_{k}(x_{2^k-1}, \dots, x_{2^{k+1}})].$$

\begin{corollary}\label{cor-razresh}
Let $n, q, r$ be positive integers such that
 $1\leq r \leq n-1$ and the image of $r$ in ${\Bbb Z}/d{\Bbb Z}$ is
a primitive $q$th root of $1$ for every divisor $d$ of $n$.

{\rm (a)} For  the function $f(c,q)$ given by Proposition~\ref{razresh},
the following holds. If we arbitrarily and formally assign
lower indices $i_1, i_2, \ldots
\in {\Bbb Z}$ to elements $y_{i_1}, y_{i_2}, \dots$
of an arbitrary  Lie ring, then the commutator
$\delta_{{f(c,q)}}(y_{i_1}, y_{i_2}, \dots y_{i_{2^{f(q,
c)}}})$ can be represented as a linear combination of commutators
in the same elements $y_{i_1}, y_{i_2}, \dots, y_{i_{2^{f(q,
c)}}}$ each of which contains either a subcommutator with zero
modulo $n$ sum of indices or a subcommutator of the form $[g
_{u_1}, g_{u_2},\dots ,g_{u_{c+1}} \,]$ with an $r$-independent
sequence $(u_1,\dots,u_{c+1})$ of indices, where elements $ g_j$ are
commutators in $y_{i_1}, y_{i_2}, \dots, y_{i_{2^{f(c,q)}}}$
such that the sum of indices of all the elements involved in $g_j$ is congruent to
$j$ modulo~$n$.

{\rm (b)} For the function $w=w(c,q)$ given by Lemma~\ref{l_b} any
commutator $[y_{a}, \underbrace{x_{b}, y_b, \dots ,z_{b}}_{w}]$,
with $w$ elements with the same index $b$ over the brace, can be
represented as a linear combination of the same form as in (a)
whenever the image of $b$ in $\Bbb{Z}/n\Bbb{Z}$ is such that
$o(b)>N(c,q)=\max\{2^{2^{2q-3}-1}c^{2^{2q-3}},q^{c+1} \}$.
\end{corollary}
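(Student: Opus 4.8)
The plan is to deduce both parts from Proposition~\ref{razresh} and Lemma~\ref{l_b} by a ``universal'' argument in a free Lie ring: one promotes a computation in a genuinely graded Lie ring to a formal commutator identity that then survives an arbitrary substitution of the $y$'s. For part~(a), introduce the free Lie ring $\mathcal F$ on free generators $z_1,\dots,z_{2^{f(c,q)}}$ and make it $(\Z/n\Z)$-graded by declaring $z_k$ to have index the image of $i_k$ in $\Z/n\Z$; then every commutator in the $z_k$ is homogeneous, of index equal to the sum of the indices of the generators occurring in it. Let $J$ be the ideal of $\mathcal F$ generated by the component $\mathcal F_0$ together with all the subgroups $[\mathcal F_{d_1},\dots,\mathcal F_{d_{c+1}}]$, taken over all $r$-independent sequences $(d_1,\dots,d_{c+1})$. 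All these generators of $J$ are homogeneous, so $J$ is a homogeneous ideal with $J_0=\mathcal F_0$; hence $\bar{\mathcal F}=\mathcal F/J$ is a $(\Z/n\Z)$-graded Lie ring with $\bar{\mathcal F}_0=0$, and, the left-normed commutator being additive in each entry, it satisfies the selective $c$-nilpotency condition~\eqref{select}. Now Proposition~\ref{razresh} applies and gives $\bar{\mathcal F}^{(f(c,q))}=0$, i.e.\ $\mathcal F^{(f(c,q))}\subseteq J$. Since $\delta_k$ lies in the $k$th term of the derived series (induction on $k$, using $\delta_{k+1}=[\delta_k(\cdots),\delta_k(\cdots)]\in[\mathcal F^{(k)},\mathcal F^{(k)}]$), we conclude that $\delta_{f(c,q)}(z_1,\dots,z_{2^{f(c,q)}})\in J$.

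The second step is to unwind what membership in $J$ means. The ideal generated by a subset $S$ of a Lie ring is the span of all left-normed commutators $[s,m_1,\dots,m_t]$ with $s\in S$ and $m_j$ in the ring. Expanding the $m_j$ as $\Z$-combinations of commutators in the $z_k$, and choosing $s$ to be either a homogeneous commutator of index $0$ (when it is taken from $\mathcal F_0$) or a left-normed commutator $s=[g_1,\dots,g_{c+1}]$ with each $g_\ell$ a homogeneous commutator of index $d_\ell$ (when it is taken from some $[\mathcal F_{d_1},\dots,\mathcal F_{d_{c+1}}]$ with $(d_1,\dots,d_{c+1})$ $r$-independent), one sees, by $\Z$-linearity, that $J$ is spanned by commutators in the $z_k$ each of which contains \emph{either} a subcommutator of index $0$ in $\Z/n\Z$ \emph{or} a subcommutator $[g_{u_1},\dots,g_{u_{c+1}}]$ with an $r$-independent sequence $(u_1,\dots,u_{c+1})$, in which each $g_j$ is a commutator in the $z_k$ of index $u_j$. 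Hence $\delta_{f(c,q)}(z_1,\dots,z_{2^{f(c,q)}})$ is a $\Z$-linear combination of commutators of exactly this kind. As this is an identity in the free Lie ring, it is preserved under the substitution $z_k\mapsto y_{i_k}$ into any Lie ring, and since the relevant property of a subcommutator is recorded entirely by the formally assigned indices, which are just carried along by the substitution, part~(a) follows.

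Part~(b) is proved the same way, with the free Lie ring now taken on a generator $z_0$ of index the image of $a$ and generators $z_1,\dots,z_{w(c,q)}$ of index $b$, with the same ideal $J$, and with Lemma~\ref{l_b} used in place of Proposition~\ref{razresh}. Since $\bar{\mathcal F}=\mathcal F/J$ satisfies~\eqref{select}, has $\bar{\mathcal F}_0=0$ and $o(b)>N(c,q)$, Lemma~\ref{l_b} gives $[\bar{\mathcal F},\underbrace{\bar{\mathcal F}_b,\dots,\bar{\mathcal F}_b}_{w(c,q)}]=0$, whence $[z_0,z_1,\dots,z_{w(c,q)}]\in J$; the description of $J$ from the previous paragraph then finishes the proof after substituting for $z_0,z_1,\dots,z_{w(c,q)}$ the given elements $y_a,x_b,y_b,\dots,z_b$.

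The one place that needs care is the bookkeeping in the second step: one must check that passing to $\bar{\mathcal F}=\mathcal F/J$ simultaneously forces the hypotheses of Proposition~\ref{razresh} and Lemma~\ref{l_b} and produces \emph{precisely} the two permitted shapes of subcommutators, so that the a priori opaque membership relations $\delta_{f(c,q)}\in J$ and $[z_0,\dots,z_{w(c,q)}]\in J$ translate cleanly into the asserted representations. Once the free-ring set-up is in place, everything else is an immediate appeal to the two cited results.
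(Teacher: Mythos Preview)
Your argument is correct and is essentially the paper's own proof, only more fully spelled out: the paper also passes to a free Lie ring on generators carrying the given indices, uses the induced $(\Z/n\Z)$-grading, and obtains (via Proposition~\ref{razresh} for~(a) and Lemma~\ref{l_b} for~(b)) that $\delta_{f(c,q)}$, respectively $[z_0,z_1,\dots,z_{w}]$, lies in ${}_{\rm id}\langle M_0\rangle+\sum{}_{\rm id}\langle[M_{u_1},\dots,M_{u_{c+1}}]\rangle$, then transfers the resulting identity to an arbitrary Lie ring by freeness. Your explicit formation of the quotient $\mathcal F/J$ and verification that it has trivial zero component and satisfies~\eqref{select} is exactly the step the paper leaves implicit in the phrase ``By Proposition~\ref{razresh} we obtain\dots''.
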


\begin {proof}
(a)  Let $M$ be a free Lie ring freely generated
by $y_{i_1}, y_{i_2}, \dots, y_{i_{2^{f(c,q)}}}$. We define for
each $i=0,\,1,\,\dots ,n-1$ the additive subgroup $M_i$ of $M$
generated (in the additive group) by all commutators in the
generators $y_{i_j}$ with the sum of indices congruent to $i$
modulo $n$. Then, obviously, $M=M_0\oplus M_1\oplus \cdots \oplus
M_{n-1}$ and $ [M_i,M_j]\subseteq M_{i+j\,({\rm mod\, n)}}$, so
that this is a $({\Bbb Z} /n{\Bbb Z})$-grading.
 By Proposition~\ref{razresh} we obtain
 $$\delta_{{f(c,q)}}(y_{i_1}, y_{i_2}, \dots, y_{i_{2^{f(c,q)}}})\in {}_{{\rm
id}}\!\left<M_0\right>+\sum_{\substack{(u_1,\dots,u_{c+1})\\ \;\text{is $r$-independent}}}
{}_{{\rm id}}\!\left< [M_{u_1}, M_{u_2},\dots
,M_{u_{c+1}}] \right>.$$ By the definition of $M_i$ this inclusion
is equivalent to the required equality in the conclusion of
Corollary \ref{cor-razresh}(a). Since the elements $y_{i_1}, y_{i_2},
\dots y_{i_{2^{f(c,q)}}}$ freely generate the Lie ring $M$, the
same equality holds in any Lie ring.

(b) The proof of the second statement is obtained by
the same arguments with the only difference that
Lemma~\ref{l_b} is applied instead of Proposition~\ref{razresh}.
\end{proof}

\section{Bounding the nilpotency class}

In this section we obtain bounds for the nilpotency class of
groups and Lie rings admitting a metacyclic Frobenius group of
automorphisms with fixed-point free kernel. Earlier such results were
obtained by the second and third authors \cite{mak-shu10} in the case
of the kernel of prime order. Examples at the end of the section
show that such nilpotency results are no longer true for non-metacyclic
Frobenius groups of automorphisms. The results for groups are
consequences of the corresponding results for Lie rings and
algebras, by various Lie ring methods.

\subsubsection*{Lie algebras} We begin with the theorem
for Lie algebras, which is devoid of technical details
required for arbitrary Lie rings, so that the ideas of proof are more clear.
Recall that $C_L(A)$ denotes the fixed-point subalgebra for a group of
automorphisms $A$ of a Lie algebra~$L$, and that nilpotency class
is also known as nilpotency index.

\begin{theorem}\label{liealg}
Let $FH$ be a Frobenius group with cyclic kernel
$F$ of order $n$ and complement $H$ of order $q$. Suppose that
$FH$ acts by automorphisms on a Lie algebra $L$ in such a way that
$C_L(F)=0$ and $C_L(H)$ is nilpotent of class $c$.
Then $L$ is nilpotent of $(c,q)$-bounded class.
\end{theorem}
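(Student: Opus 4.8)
The plan is to reduce the statement to the purely combinatorial Lie ring theory of \S\,4 by passing to a suitable ground field extension where $F$ acts diagonalisably, and then to extract nilpotency of bounded class from the bounded derived length provided by Proposition~\ref{razresh} together with the `selective nilpotency' structure. First I would extend the ground field to include a primitive $n$th root of unity $\omega$; since $F=\langle f\rangle$ is cyclic of order $n$ and $C_L(F)=0$, the operator $f$ is non-singular, and over the extended field $L=\bigoplus_{i}L_i$ decomposes into eigenspaces $L_i=\{x\in L\mid xf=\omega^i x\}$, which gives a $(\Bbb Z/n\Bbb Z)$-grading with $L_0=C_L(F)=0$. (One must check that extension of scalars does not destroy the hypotheses: $C_{\bar L}(F)=0$ and $C_{\bar L}(H)$ nilpotent of class $c$ are inherited, and it suffices to prove nilpotency of bounded class for $\bar L$.) The complement $H=\langle h\rangle$ is cyclic of order $q$, acts as $f^h=f^r$ for the integer $r$ of \eqref{prim}, and permutes the eigenspaces by $L_i h = L_{ri}$.

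Next I would translate the hypothesis that $C_L(H)$ is nilpotent of class $c$ into the selective $c$-nilpotency condition \eqref{select}. Given an $r$-independent sequence $(d_1,\dots,d_{c+1})$ and homogeneous elements $x_{d_j}\in L_{d_j}$, one forms, for a commutator $[x_{d_1},\dots,x_{d_{c+1}}]$, the ``averaged'' elements obtained by summing the $H$-orbit: because $(d_1,\dots,d_{c+1})$ is $r$-independent, the corresponding combination of $h$-translates lands in $C_L(H)$ in a genuinely $(c+1)$-fold nested way, so it vanishes by nilpotency of class $c$ of $C_L(H)$; unravelling the vanishing of all such averages (a Vandermonde-type argument using that the relevant $1-r^{i}$ are invertible, exactly as in Lemma~\ref{115}) forces $[x_{d_1},\dots,x_{d_{c+1}}]=0$. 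This is the step I expect to be the main obstacle: making the averaging argument precise so that $r$-independence is exactly what is needed to push the $(c+1)$ brackets inside $C_L(H)$, and checking it works component by component. Once \eqref{select} holds, Proposition~\ref{razresh} gives that $L$ is solvable of $(c,q)$-bounded derived length $f(c,q)$.

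With bounded derived length in hand, I would finish by an induction that upgrades solvability to nilpotency of bounded class, using the detailed structure of ideals generated by $[L_{d_1},\dots,L_{d_c}]$ from Corollary~\ref{malocomp} and Lemma~\ref{dva}. The idea is to consider the last nontrivial term $M=L^{(f-1)}$ of the derived series; since $L^{(f_0)}\subseteq I$ where $I$ is the sum of the ideals ${}_{\mathrm{id}}\langle[L_{d_1},\dots,L_{d_c}]\rangle$ over $r$-independent $(d_1,\dots,d_c)$, and each such ideal has $(c,q)$-boundedly many nontrivial components (Corollary~\ref{malocomp}), one gets strong Engel-type restrictions: by Lemma~\ref{l_b} each component $L_b$ with $o(b)$ large acts nilpotently of bounded index on $L$, and the remaining components (with $o(b)$ bounded) span a subgroup whose grading lives in a bounded subgroup of $\Bbb Z/n\Bbb Z$. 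Combining these with the Lie ring analogue \eqref{chao} of P.~Hall's theorem, applied along the derived series, yields nilpotency of $(c,q)$-bounded class. The bound ultimately depends only on $c$, $q$, and the combinatorial functions $f(c,q)$, $w(c,q)$, $N(c,q)$ already produced in \S\,4, hence is $(c,q)$-bounded as claimed, with no dependence on $n=|F|$.
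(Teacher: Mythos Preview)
Your reduction to a $(\Bbb Z/n\Bbb Z)$-graded Lie algebra with $L_0=0$ is correct, and so is the appeal to Proposition~\ref{razresh} for $(c,q)$-bounded derived length. Two remarks on that first half. First, the derivation of the selective $c$-nilpotency condition \eqref{select} is simpler than you indicate: one forms $X_j=\sum_{i=0}^{q-1}x_{d_j}^{h^i}\in C_L(H)$, expands $[X_1,\dots,X_{c+1}]=0$, and observes that if $(d_1,\dots,d_{c+1})$ is $r$-independent then $[x_{d_1},\dots,x_{d_{c+1}}]$ is the \emph{only} summand in its grading component, so it vanishes by the directness of the eigenspace decomposition; no Vandermonde argument is needed here. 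Second, you omit the small but necessary step of arranging that the characteristic does not divide $n$ (by replacing $F$ with its Hall $p'$-subgroup), which is required later to make $n$ invertible.

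The genuine gap is in your final paragraph. The tools you cite from \S\,4 (Lemma~\ref{l_b}, Corollary~\ref{malocomp}, Lemma~\ref{dva}) are precisely those already consumed in the proof of Proposition~\ref{razresh}; they do not by themselves upgrade solvability to nilpotency. The obstruction is that Lemma~\ref{l_b} only controls $[L,L_b,\dots,L_b]$ when $o(b)>N(c,q)$, and your remark that the small-$o(b)$ components ``live in a bounded subgroup of $\Bbb Z/n\Bbb Z$'' does not prevent long commutators that mix large- and small-order indices. The paper closes this gap with a new metabelian lemma (Lemma~\ref{l_b-metab}): for \emph{every} $b$ there is a $(c,q)$-bounded $m$ with $[L,\underbrace{L_b,\dots,L_b}_m]=0$. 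Its proof requires ideas not present in \S\,4 --- an induction on $|F|$ via the subalgebra $C_L(\varphi^{n/s})$ when a prime $s$ divides both $b$ and the first index, and a Vandermonde inversion (Lemma~\ref{vander}, using invertibility of $n$) to express $L_b$ through the $H$-averaged subspaces $Z^{\varphi^j}$ when $s\nmid a$. Only with this uniform Engel bound can one combine Lemma~\ref{rigid} and a pigeonhole on repeated indices to finish the metabelian case, and then invoke \eqref{chao} up the derived series. Your sketch lacks any substitute for Lemma~\ref{l_b-metab}, so as it stands the argument does not close.
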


Note that the assumption of the kernel $F$ being cyclic is
essential: see the corresponding examples at the end of the section.

\begin{proof}
Our first aim is to reduce the problem to the study of ${\Bbb Z}/n {\Bbb
Z}$-graded Lie algebras satisfying the selective $c$-nilpotency
condition~\eqref{select}. Then the solubility of $L$ of
$(c,q)$-bounded derived length will immediately follow from
Proposition~\ref{razresh}. Further arguments are then applied to obtain nilpotency.

Let $\omega$ be a primitive $n$th root of $1$. We extend the
ground field by $\omega$ and denote the resulting Lie algebra by $\widetilde L$.
The group $FH$ acts in a natural way on $\widetilde L$ and this action
inherits the conditions that $C_{\widetilde
L}(F)=0$ and $C_{\widetilde L}(H)$ is nilpotent of class~$c$. Thus,
we can assume that $L=\widetilde L$ and the ground field contains~$\omega$, a~primitive $n$th root
 of~$1$.

Let $\varphi$ be a generator of $F$. For each $i=0,\dots,n-1$ we
denote by $L_i=\{x\in L\mid x^{\varphi}=\omega^ix\}$ the eigenspace for the eigenvalue $\omega^i$. Then
$$[L_i, L_j]\subseteq L_{i+j\,(\rm{mod}\,n)}\qquad \text{and}\qquad L= \bigoplus _{i=0}^{n-1}L_i,$$
so this is a $(\Z /n\Z
)$-grading. We also have $L_0=C_L(F)=0$.

 Since $F$ is cyclic of order $n$,
$H$ is also cyclic. Let $H= \langle h \rangle$ and let $\varphi^{h^{-1}}
= \varphi^{r}$ for some $1\leq r \leq n-1$. Then $r$ is a
primitive $q$th root of $1$ in ${\Bbb Z}/n {\Bbb Z}$, and,
moreover, the image of $r$ in ${\Bbb Z}/d {\Bbb Z}$ is a primitive
$q$th root of $1$ for every divisor $d$ of $n$, since $h$ acts fixed-point-freely
on every subgroup of $F$. Thus, $n,q,r$ satisfy condition \eqref{prim}.

We shall need a simple remark that we can  assume
that  $n=|F|$ is  not divisible by the characteristic of the
ground field. Indeed, if the characteristic $p$ divides $n$, then
the Hall $p'$-subgroup $\langle \chi\rangle$ of $F$ acts
fixed-point-freely on $L$ --- otherwise the Sylow $p$-subgroup $\langle \psi\rangle$ of $F$
would have non-trivial fixed points on the $\psi$-invariant subspace $C_L(\chi )$,
and these would be non-trivial fixed points for $F$.
Thus, $L$ admits the Frobenius group
of automorphisms $\langle\chi\rangle H$ with $C_L(\chi)=0$.
Replacing $F$ by $\langle \chi\rangle$ we can assume that $p$ does
not divide $n$.

The group $H$ permutes the components $L_i$ so that ${L_i}^h =
L_{ri}$ for all $i\in \Bbb Z/n\Bbb Z$. Indeed, if $x_i\in L_i$,
then $(x_i^{h})^{\varphi} = x_i^{h\varphi h^{-1}h} =
(x_i^{\varphi^{r}})^h = \omega^{ir}x_i^h$.

Given $u_k\in
L_k$, we temporarily denote $u_k^{h^i}$ by $u_{r^ik}$ (under Index Convention). The sum over any
$H$-orbit belongs to $C_L(H)$ and therefore
$u_k+u_{rk}+\cdots+u_{r^{q-1}k}\in C_L(H)$. Let
$x_{a_1},\dots,x_{a_{c+1}}$ be homogeneous elements in
$L_{a_1},\dots,L_{a_{c+1}}$, respectively. Consider the elements
\begin{align*}
X_1&=x_{a_1}+x_{ra_1}+\cdots+x_{r^{q-1}a_1},\\
\vdots&\\
X_{c+1}&=x_{a_{c+1}}+x_{ra_{c+1}}+\cdots+x_{r^{q-1}a_{c+1}}.
\end{align*}
Since all of them lie in $C_L(H)$, which is nilpotent
of class $c$, it follows that
$$[X_1,\dots,X_{c+1}]=0.$$
After
expanding the expressions for the $X_i$ we obtain on the left a linear combination of
commutators in the $x_{r^ja_i}$, which in particular involves the term
$[x_{a_1},\dots,x_{a_{c+1}}]$. Suppose that the commutator
$[x_{a_1},\dots,x_{a_{c+1}}]$ is non-zero. Then there must be other terms in the expanded
expression that belong to the same component
$L_{a_1+\cdots+a_{c+1}}$. In other words, then
$$a_{1}+\dots+a_{c+1}=r^{\alpha_1}a_{1}+\dots+r^{\alpha_{c+1}}a_{c+1}$$
for some $\alpha_i\in\{0,1,2,\dots,q-1\}$ not all of which are zeros, so that
$(a_1,\dots, a_{c+1})$ is an $r$-dependent sequence.
This means that $L$ satisfies the selective $c$-nilpotency condition~\eqref{select}.
By Proposition~\ref{razresh} we obtain that $L$ is solvable of $(c,q)$-bounded derived length.

We now use induction on the derived length to prove that $L$ is
nilpotent of $(c,q)$-bounded class. If $L$ is abelian, there is
nothing to prove. Assume that $L$ is metabelian --- this is, in
fact, the main part of the proof. When $L$ is metabelian,
$[x,y,z]=[x,z,y]$ for every $x\in [L,L]$ and $y,z\in L$.
The key step is a stronger version of Lemma~\ref{l_b} for the
metabelian case, without the arithmetical condition on the
additive orders.

\begin{lemma}\label{l_b-metab} Let $L$ be metabelian.
There is a $(c,q)$-bounded number $m$ such that
{$[L,\underbrace{L_b,\dots,L_b}_{m}
]=0$} for every $b\in \Bbb{Z}/n\Bbb{Z}$.
\end{lemma}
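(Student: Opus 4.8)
The plan is to reduce, via Lemma~\ref{l_b}, to the case of $b$ of bounded additive order, and then to use metabelianness. By Lemma~\ref{l_b} the asserted conclusion, with the number $w$ appearing there, already holds for every $b$ with $o(b)>N(c,q)$. Hence it suffices to produce a $(c,q)$-bounded number $m_0$ with $[L,\underbrace{L_b,\dots,L_b}_{m_0}]=0$ whenever $o(b)\le N(c,q)$, and then take $m=\max\{w,m_0\}$. So I fix $b$ with $s:=o(b)\le N(c,q)$, a $(c,q)$-bounded number; this is exactly the regime the arithmetical hypothesis of Lemma~\ref{l_b} was designed to exclude.

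The single new tool I would use is a redistribution identity valid in any metabelian Lie ring: for $k\ge 1$ and arbitrary elements, $[[u,v],y_1,\dots,y_k]=[[u,y_1,\dots,y_k],v]+[u,[v,y_1,\dots,y_k]]$, since every cross term contains two subcommutators lying in $[L,L]$ and hence lies in $[[L,L],[L,L]]=0$. Iterating this on the entries of a homogeneous commutator one sees, first, that the elements of $L_b$ act on $[L,L]$ as a commuting family of operators, and, second, that bracketing a homogeneous commutator $[g_1,\dots,g_\ell]\in[L,L]$ with $\underbrace{L_b,\dots,L_b}_{k}$ produces a sum of commutators in which all $k$ copies of $L_b$ are concentrated on a single factor $g_i$, i.e.\ is a combination of terms $[g_1,\dots,g_{i-1},\,[g_i,\underbrace{L_b,\dots,L_b}_{k}]\,,g_{i+1},\dots,g_\ell]$. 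These two facts let one pass freely between the module $[L,L]$ and the homogeneous elements out of which it is built, and they are what makes the arithmetic of Lemmas~\ref{lb}--\ref{l_b} dispensable once $o(b)$ is under control.

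To finish, that is, to bound $({\rm ad}\,L_b)$-nilpotency when $s=o(b)$ is bounded, I would argue by induction on $c$ in the spirit of Proposition~\ref{razresh}. Passing to $L/I$, where $I$ is the ideal generated by all $[L_{i_1},\dots,L_{i_c}]$ over $r$-independent sequences $(i_1,\dots,i_c)$, so that $L/I$ remains metabelian, $\Z/n\Z$-graded with trivial zero component, and selectively $(c-1)$-nilpotent, the inductive hypothesis gives $[L,\underbrace{L_b,\dots,L_b}_{m_1}]\subseteq I$ for a $(c-1,q)$-bounded $m_1$; it then remains to bound $({\rm ad}\,L_b)$-nilpotency uniformly on each ideal $T={}_{\rm id}\langle[L_{i_1},\dots,L_{i_c}]\rangle$, each of which by Corollary~\ref{malocomp} has only $(c,q)$-boundedly many nontrivial grading components. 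When $o(b)$ exceeds that number of components, a pigeonhole argument on the support of $T$ (together with $sb=0$) already collapses the $L_b$-factors; the essential difficulty is the complementary case, where $o(b)=s$ is so small that $b$-chains can run indefinitely inside $\operatorname{supp}T$ (equivalently, where a full coset of the subgroup $\langle b\rangle$ of $\Z/n\Z$ lies in $\operatorname{supp}T$). There I would use the explicit normal form for $T$ from Corollary~\ref{tri} and Lemma~\ref{dva} --- in which, past a $(c,q)$-bounded initial segment, the appended factors have indices in $B=\{b'\in D(i_1,\dots,i_c):o(b')\le N(c,q)\}$, a set generating a subgroup of $(c,q)$-bounded order --- together with Lemma~\ref{odin}, the redistribution identity above, and the relation $sb=0$, to keep the set of indices that actually occur in a $(c,q)$-bounded set and thus to cut the number of appended $L_b$-factors down to a $(c,q)$-bounded value. (Alternatively one may pass to the homogeneous subring $\bigoplus_{j\in\langle b\rangle}L_j$, which is again metabelian, has trivial zero component, and satisfies the selective $c$-nilpotency condition for the $(c,q)$-bounded modulus $s$ and the parameters $s,q,r\bmod s$.) I expect this last case, where the $\Z/n\Z$-grading is too coarse in the $b$-direction for any counting argument, to be the main obstacle.
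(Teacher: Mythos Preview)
Your reduction via Lemma~\ref{l_b} to the case $o(b)\le N(c,q)$ is fine, and your redistribution identity is correct and useful. However, the approach you outline for the small-$o(b)$ case is genuinely different from the paper's and, as you yourself flag, incomplete at the crucial point. You try to work entirely with the abstract selective $c$-nilpotency condition on the grading; the paper does not, and it is not clear that condition alone suffices here.

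The paper's proof never passes to $L/I$ or invokes the structure of the ideals $T$. Instead it goes back to the original hypotheses on $L$ (the Frobenius action of $FH$ with $C_L(H)$ nilpotent of class $c$) and argues, for fixed $a$, by induction on $n=|F|$. If a prime $s$ divides both $n$ and $b$, there are two subcases. When $s\mid a$, both $L_a$ and $L_b$ lie in the $FH$-invariant subalgebra $C_L(\varphi^{n/s})=\sum_i L_{si}$, on which $\varphi$ acts with order $n/s$ and no fixed points, so the inductive hypothesis applies. When $s\nmid a$, the indices $a,ra,\dots,r^{q-1}a$ in the $H$-orbit sum are pairwise distinct modulo~$s$, while all indices in the $H$-orbit of $b$ are $\equiv 0\pmod s$; this lets one isolate the single $L_a$-term from the relation $[X_a,\underbrace{X_b,\dots,X_b}_c]=0$ in $C_L(H)$, obtaining $[L_a,\underbrace{Z,\dots,Z}_c]=0$ for $Z$ the span of $H$-orbit sums over $L_b$. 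A Vandermonde argument (Lemma~\ref{vander}) then shows $L_b\subseteq\sum_j Z^{\varphi^j}$, and metabelianness gives $[[L,L]_a,\underbrace{L_b,\dots,L_b}_{q(c-1)+1}]=0$.

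Your alternative of passing to $\bigoplus_{j\in\langle b\rangle}L_j$ captures only the case $a\in\langle b\rangle$, which is exactly the case the paper handles by induction on $|F|$; the complementary case $a\notin\langle b\rangle$ is precisely where the paper needs the $H$-action and the Vandermonde step, and your proposal offers no substitute for that. The sketch involving Lemma~\ref{dva} and the set $B$ does not close the gap: the normal form there allows arbitrarily long $B$-tails, so it does not by itself bound the $({\rm ad}\,L_b)$-nilpotency degree of $T$ when $b\in B$.
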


\begin{proof}
For each $a\in \Bbb Z/n\Bbb Z$ we denote $[L,L]\cap L_a$ by $[L,L]_a$.
Clearly, it suffices to show that $[[L,L]_a,\underbrace{L_b,\dots,L_b}_{m-1}
]=0$ for every $a,b\in \Bbb{Z}/n\Bbb{Z}$; we can of course assume
that $a,b\ne 0$. First suppose that $(n,b)=1$. If $n$ is large
enough, $n>N(c,q)$, then the order $o(b)=n$ is also large enough
and the result follows by Lemma~\ref{l_b} with  $m-1=w$, where $w$
is given by Lemma~\ref{l_b}. If, however, $n\leq N(c,q)$, then we
find a positive integer $k<n$ such that $a+kb=0$ in
$\Bbb{Z}/n\Bbb{Z}$. (As usual, we
freely switch from considering positive integers to their images
in $\Bbb{Z}/n\Bbb{Z}$ without changing notation.) Then
$[L_a,\underbrace{L_b,\dots,L_b}_{k} ]\subseteq L_0=0$, so that we can put $m-1=N(c,q)$.

Now suppose that $(n,b)\ne 1$, and let $s$ be a prime dividing
both $n$ and $b$. If $s$ also divides $a$, then the result follows
by induction on $|F|$ applied to the Lie algebra $C_L(\f
^{n/s})=\sum _iL_{si}$ containing both $L_a$ and $L_b$: this Lie
algebra is $FH$-invariant and $\f $ acts on it as an
automorphism of order $n/s$ without non-trivial fixed points. The
basis of this induction is the case of $n=|F|$ being a prime,
where necessarily $(n,b)=1$, which has already been dealt with above.
(Alternatively, we can refer to the main result of
\cite{mak-shu10}, where Theorem~\ref{liealg} was proved for $F$ of prime order.)
Note that the function $m=m(c,q)$
remains the same in the induction step, so no dependence on $|F|$ arises.

Thus, it remains to consider the case where the prime $s$ divides
both $b$ and $n$ and does not divide $a$.
Using the same notation $x_{r^ik} =x_k^{h^i}$ (under Index Convention), we
have
$$\Big[u_a+u_{ra}+\dots +u_{r^{q-1}a},\,\underbrace{
(v_b+v_{rb}+\dots +v_{r^{q-1}b}),\dots , (w_b+w_{rb}+\dots +w_{r^{q-1}b})}_{c}\Big]=0$$
for any $c$ elements $v_b,\dots ,w_b \in L_b$, because here sums are  elements of $C_L(H)$.

By \eqref{prim} any two indices $r^ia,\,r^ja$ here are different modulo $s$, while all the indices
above the brace are divisible by $s$, which divides $n$.
Hence we also have
\begin{equation}\label{mod-s1}
\Big[u_a,\, \underbrace{ (v_b+v_{rb}+\dots
+v_{r^{q-1}b}),\dots , (w_b+w_{rb}+\dots +w_{r^{q-1}b})}_{c}\,\Big]=0.
\end{equation}
Let $Z$ denote the span of all the sums $x_b+x_{rb}+\dots
+x_{r^{q-1}b}$ over $x_b\in L_b$ (in fact, $Z$ is the fixed point subspace of $H$ on
$\bigoplus _{i=0}^{q-1}L_{r^ib}$). Then \eqref{mod-s1}
means that
$$\Big[L_a,\, \underbrace{Z,\dots ,
Z}_{c}\,\Big]=0.$$
 Applying $\f ^j$ we also obtain
\begin{equation}\label{mod-s2}
\Big[L_a,\,
\underbrace{Z^{\f ^j},\dots , Z^{\f
^j}}_{c}\,\Big]=\Big[L_a^{\f
^j},\, \underbrace{Z^{\f ^j},\dots , Z^{\f
^j}}_{c}\,\Big]=0.\end{equation}

A Vandermonde-type linear algebra argument shows that $L_b\subseteq \sum _{j=0}^{q-1}Z^{\f
^j}$. We prove this fact in a greater generality, which will be needed later.

\begin{lemma}\label{vander}
Let $\langle \f\rangle$ be a cyclic group of order $n$, and $\w$ a primitive $n$th root of unity.
Suppose that $M$ is a $\Z [\w ]\langle \f\rangle$-module such that $M=\sum _{i=1}^mM_{k_i}$,
where $x\f =\w ^{k_i}x$ for  $x\in M_{k_i}$ and  $0\leq k_1<k_2<\cdots
<k_m<n$. If $z=y_{k_1}+y_{k_2}+\cdots +y_{k_m}$ for $y_{k_i}\in
M_{k_i}$, then for some $m$-bounded number $l_0$  every element $n^{l_0}y_{k_s}$
is a $\Z [\w ]$-linear combination of the elements
$z,\,z\f,\dots ,z\f ^{m-1}$.
\end{lemma}

\begin{proof}
We have
$$z{\f ^j}= \w ^{jk_1} y_{k_1}+\w ^{jk_2} y_{k_2}+\dots +\w
^{jk_m}y_{k_m}.$$
Giving values $j=0,\dots ,m-1$ we
obtain $m$ linear combinations of the elements $y_{k_1}, y_{k_2}, \dots ,y_{k_m}$.
Then for every $s=1,\dots ,m$
a suitable linear combination of these linear combinations produces $Dy_{k_s}$, where
$D$ is the Vandermonde determinant of the $m\times m$ matrix of
coefficients of these linear combinations, which is equal to
$$\prod\limits_{1\leq i<j\leq
m}(\w  ^{k_{i}}-\w  ^{k_j}).$$
Each factor is a product of an
(invertible) power of $\w$ and an element
$1-\w  ^{k_i-k_j}$. By hypothesis the exponent $k_i-k_j$
is not divisible by $n$, so
$\rho= \w  ^{k_i-k_j}$ is a root of $1+x+\dots
+x^{n-1}=(x-\rho )g(x)$. By substituting $x=1$ we obtain
$n=(1-\rho )g(1)$, so by multiplying $Dy_{k_s}$ by an invertible power of $\w$ and
several elements of the type $g(1)$ we
obtain $n^{l_0}y_{k_s}$ for some $m$-bounded number $l_0$. As a result, $n^{l_0}y_{k_s}$ is expressed as a
$\Z [\w ]$-linear combination of the elements
$z,\,z\f,\dots ,z\f ^{m-1}$.
\end{proof}

In the proof of Lemma~\ref{l_b-metab} we can apply Lemma~\ref{vander} with  $M=L_b+L_{rb}+\dots
+L_{r^{q-1}b}$ and $m=q$ to $w=v_b+v_{rb}+\dots
+v_{r^{q-1}b}\in Z$ for any $v_b\in L_b$, because here all the indices $r^ib$ can be regarded as
pairwise distinct residues modulo $n$ by condition~\eqref{prim} (that $r$ is a primitive $q$th
root of unity modulo any divisor of $n$). Using the fact that $n$ is invertible in our ground field
we obtain that $L_b\subseteq \sum _{j=0}^{q-1}Z^{\f ^j}$.

We now claim that $$[[L,L]_a,\, \underbrace{L_b,\dots ,
L_b}_{q(c-1)+1}]=0.$$ Indeed, after replacing $L_b$ with $\sum
_{j=0}^{q-1}Z^{\f ^j}$ and expanding the sums, in each
commutator of the resulting linear combination  we can freely
permute the entries $Z^{\f ^j}$, since $L$ is  metabelian. Since there are sufficiently many of them, we
can place  at least $c$ of the same
$Z^{\f ^{j_0}}$ for some $j_0$ right after $[L,L]_a$ at the beginning,
which gives $0$ by \eqref{mod-s2}.
\end{proof}

We now return to  the case of metabelian $L$ in the proof of Theorem~\ref{liealg}.
Let $m=m(c,q)$ be as in Lemma~\ref{l_b-metab}
and put $g=(m-1)(q^{c+1}+c)+2$. For any sequence of $g$ non-zero elements
$(a_1,\dots,a_g)$ in $\Bbb Z/n\Bbb Z$ consider the commutator
$[[L,L]_{a_1},L_{a_2},\dots,L_{a_g}]$.
If the sequence
$(a_1,\dots,a_g)$ contains an $r$-independent subsequence of
length $c+1$ that starts with $a_1$, by
 permuting the $L_{a_i}$ we can assume that $a_1,\dots,a_{c+1}$ is the
$r$-independent subsequence. Then  $[[L,L]_{a_1},L_{a_2},\dots,L_{a_g}]=0$ by~\eqref{select}.
If the sequence
$(a_1,\dots,a_g)$ does not contain an $r$-independent subsequence of
length $c+1$ starting with $a_1$, then by Lemma~\ref{rigid} the sequence $(a_1,\dots,a_g)$
contains at most $q^{c+1}+c$ different values. The number $g$ was
chosen big enough to guarantee that either the value of $a_1$
occurs in $(a_1,\dots,a_g)$ at least $m+1$ times or, else, another
value, different from $a_1$, occurs at least $m$ times. Using that
$[x,y,z]=[x,z,y]$ for $x\in [L,L]$ we can assume that $a_2=\dots=a_{m+1}$, in which
case it follows from Lemma~\ref{l_b-metab} that
$[[L,L]_{a_1},L_{a_2},\dots,L_{a_g}]=0$. Thus, we conclude that
$L$ is nilpotent of class at most $g$.

Now suppose that the derived length of $L$ is at least 3. By the
induction hypothesis, $[L,L]$ is nilpotent of bounded class.
According to the previous paragraph, the quotient
$L/[[L,L],[L,L]]$ is nilpotent of bounded class, as well.
Together, this gives nilpotency of $L$ of bounded class by the Lie
ring analogue \eqref{chao} of P.~Hall's theorem.
\end{proof}

\subsubsection*{Lie groups and torsion-free locally nilpotent groups}

We now derive the group-theoretic consequences of
Theorem~\ref{liealg}; but the theorem on finite groups will have
to wait until we prove a similar result for Lie rings.
By the
well-known connection between Lie groups and their Lie algebras,
the following theorem is an immediate consequence of
Theorem~\ref{liealg}. Recall that $C_G(A)$ denotes the
fixed-point subgroup for a group of automorphisms
$A$ of a group $G$, and that nilpotency class
is also known as nilpotency index.

\begin{theorem}\label{gr-lie}
Suppose that a connected
Lie group $G$ (complex or real)
admits a finite Frobenius group of automorphisms $FH$ with
cyclic kernel $F$ of order $n$ and complement $H$ of
order $q$ such that $C_G(F)=1$ and $C_G(H)$ is nilpotent of
class $c$. Then $G$ is nilpotent
of $(c,q)$-bounded class.
\end{theorem}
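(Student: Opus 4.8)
The plan is to reduce everything to Theorem~\ref{liealg} via the standard dictionary between a connected Lie group and its Lie algebra. Write $L$ for the Lie algebra of $G$ (a real Lie algebra, or a complex one in the complex case). Every automorphism of a connected Lie group is smooth, so the action of $FH$ on $G$ differentiates to an action of the same abstract Frobenius group $FH$ on $L$ by automorphisms. First I would check that the two hypotheses pass to $L$. The fixed-point set $C_G(F)$ is a closed subgroup, and since $F$ is finite one may choose an $F$-invariant neighbourhood of $0$ in $L$ on which $\exp$ is an $F$-equivariant diffeomorphism onto a neighbourhood of $1$ in $G$; this identifies $C_L(F)$ with a neighbourhood of $1$ in $C_G(F)$, so $\operatorname{Lie}(C_G(F))=C_L(F)$. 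As $C_G(F)=1$ this forces $C_L(F)=0$. In the same way $\operatorname{Lie}(C_G(H))=C_L(H)$; the identity component $C_G(H)^{\circ}$ is a connected Lie group contained in the nilpotent group $C_G(H)$, hence nilpotent of class at most $c$, and the Lie algebra of a connected nilpotent Lie group of class $\le c$ is nilpotent of class $\le c$, so $C_L(H)$ is nilpotent of class at most $c$.

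At this point Theorem~\ref{liealg} applies verbatim to $L$ with the Frobenius group $FH$, cyclic kernel $F$ of order $n$, complement $H$ of order $q$, $C_L(F)=0$, and $C_L(H)$ nilpotent of class $c$; it yields that $L$ is nilpotent of $(c,q)$-bounded class, say $\gamma_{m+1}(L)=0$ with $m=m(c,q)$. To finish I would transfer nilpotency back to $G$. For a connected Lie group $G$ each term $\gamma_k(G)$ of the lower central series is again a connected Lie subgroup (it is generated by the continuous image of a connected set containing the identity), and an easy induction, using that the commutator subgroup of a connected normal Lie subgroup with $G$ is connected with Lie algebra the corresponding bracket, shows that its Lie algebra is exactly $\gamma_k(L)$. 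Hence $\gamma_{m+1}(G)$ is the connected Lie subgroup with zero Lie algebra, i.e.\ $\gamma_{m+1}(G)=1$, and $G$ is nilpotent of class at most $m$, which is $(c,q)$-bounded.

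There is essentially no new obstacle here beyond Theorem~\ref{liealg}: the whole argument is the standard correspondence between connected Lie groups and Lie algebras. The two points requiring (routine but genuine) care are the identification $\operatorname{Lie}(C_G(F))=C_L(F)$ — for which the finiteness of $F$ and an averaging/exponential argument are used — and the term-by-term correspondence between the lower central series of a connected Lie group and that of its Lie algebra; in particular one must pass to the identity component of $C_G(H)$, since the fixed-point subgroup of $H$ need not itself be connected.
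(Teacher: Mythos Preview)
Your proposal is correct and follows essentially the same route as the paper: differentiate the $FH$-action to the Lie algebra, identify $C_L(F)$ and $C_L(H)$ as the Lie algebras of $C_G(F)$ and $C_G(H)$, apply Theorem~\ref{liealg}, and use connectedness to pull the nilpotency bound back to $G$. The paper is terser---it simply cites a reference for $\mathrm{Lie}(C_G(\alpha))=C_{\mathfrak g}(d_e\alpha)$ and asserts that nilpotency of $\mathfrak g$ gives nilpotency of $G$ by connectedness---whereas you spell out the exponential-chart argument and the lower-central-series correspondence, but the substance is identical.
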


\begin{proof}
Every
automorphism $\alpha$ of $G$ induces the automorphism $d_e\alpha $
of the tangent Lie algebra $\frak g$ of $G$ which is the
differential of $\alpha$ at identity. The fixed-point subalgebra
$C_{\frak g}(d_e\alpha )$ is the tangent Lie algebra of the
fixed-point subgroup $C_G(\alpha )$ (see, for example,
\cite[Theorem~3.7]{oni-gor}). Therefore the group of
automorphisms $\overline{FH}$ of $\frak g$ induced by $FH$ has the
properties that $C_{\frak g}(\overline F)=0$ and $C_{\frak
g}(\overline H)$ is nilpotent of class~$c$. (This can also be
shown by using the $\mathsf{Exp}$ and $\mathsf{Log}$ functors, which locally
commute with automorphisms of~$G$.)
By
Theorem~\ref{liealg}, the Lie algebra ${\frak g}$ is nilpotent
of $(c,q)$-bounded class.  Since $G$ is connected,
this implies the same result for $G$.
\end{proof}

Another corollary of Theorem~\ref{liealg}  follows by  a similar Lie ring method ---
the Mal'cev correspondence, which is
also based on the $\mathsf{Exp}$ and
$\mathsf{Log}$ functors and the Baker--Campbell--Hausdorff formula.

\begin{theorem}\label{torsion-free}
Suppose that a
locally nilpotent torsion-free group $G$
admits a finite Frobenius group of automorphisms $FH$ with
cyclic kernel $F$ of order $n$ and complement $H$ of
order $q$ such that $C_G(F)=1$ and $C_G(H)$ is nilpotent of
class $c$. Then $G$ is nilpotent
of $(c,q)$-bounded class.
\end{theorem}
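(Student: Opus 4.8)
The plan is to transfer the problem to a Lie algebra over $\Q$ by means of the Mal'cev correspondence and then invoke Theorem~\ref{liealg}, in close analogy with the proof of Theorem~\ref{gr-lie}. First I would reduce to the case of a finitely generated group. Indeed, given finitely many elements $g_1,\dots ,g_m\in G$, the subgroup $G_0$ generated by the (finite) union of their $FH$-orbits is $FH$-invariant, finitely generated, torsion-free, and nilpotent (because $G$ is locally nilpotent); moreover $C_{G_0}(F)\leq C_G(F)=1$ and $C_{G_0}(H)\leq C_G(H)$ is nilpotent of class at most $c$. Hence it is enough to bound, in terms of $c$ and $q$ only, the nilpotency class of every such finitely generated $G_0$: if every $G_0$ is nilpotent of class at most some $(c,q)$-bounded number $\tilde c$ independent of $G_0$, then $\gamma_{\tilde c+1}(G)=1$, since any $(\tilde c+1)$-fold commutator of elements of $G$ lies in such a $G_0$.

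So assume from now on that $G$ is finitely generated torsion-free nilpotent, and let $G^{\Q}$ be its Mal'cev completion: a divisible torsion-free nilpotent group of the same nilpotency class, into which $G$ embeds and in which every element has a positive power lying in $G$. By the uniqueness of $G^{\Q}$ the action of $FH$ on $G$ extends uniquely to an action of $FH$ by automorphisms of $G^{\Q}$, and I would next check that this extended action still satisfies the fixed-point hypotheses. Since torsion-free nilpotent groups have unique roots, every element of $C_{G^{\Q}}(F)$ has a positive power in $C_{G^{\Q}}(F)\cap G=C_G(F)=1$, whence $C_{G^{\Q}}(F)=1$. Similarly, unique roots show that $C_{G^{\Q}}(H)$ is exactly the isolator of $C_G(H)$ in $G^{\Q}$ (if $g^{k}$ commutes with $h\in H$, then $g^{h}$ and $g$ are both $k$th roots of $g^{k}$, so $g^{h}=g$; conversely a suitable power of any element of $C_{G^{\Q}}(H)$ lies in $C_G(H)$). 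This subgroup is divisible, and since the isolator of a subgroup in a torsion-free nilpotent group has the same nilpotency class, $C_{G^{\Q}}(H)$ is nilpotent of class at most $c$.

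Now I would apply the Mal'cev correspondence (via the $\mathsf{Exp}$ and $\mathsf{Log}$ functors and the Baker--Campbell--Hausdorff formula): $G^{\Q}$ corresponds to a nilpotent Lie algebra $L$ over $\Q$ of the same nilpotency class, the $FH$-action transports to an action of $FH$ by automorphisms of $L$, and the correspondence commutes with this action and with the formation of fixed points, so that $C_L(F)$ and $C_L(H)$ correspond to $C_{G^{\Q}}(F)$ and $C_{G^{\Q}}(H)$. Consequently $C_L(F)=0$ and $C_L(H)$ is nilpotent of class at most $c$. Theorem~\ref{liealg} then gives that $L$ is nilpotent of $(c,q)$-bounded class $\tilde c$, hence $G^{\Q}$, and therefore its subgroup $G$, is nilpotent of class at most $\tilde c$. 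As $\tilde c$ depends only on $c$ and $q$, the reduction of the first paragraph completes the argument.

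The hard part will be the transfer of the fixed-point hypotheses from $G$ to $G^{\Q}$ (and hence to $L$): one must use that torsion-free nilpotent groups have unique roots, that $C_{G^{\Q}}(H)$ is precisely the isolator of $C_G(H)$, and that passing to the isolator preserves the nilpotency class, in order to be sure that completing $G$ neither increases the class of $C_G(H)$ nor creates new fixed points of $F$. The remaining ingredients---the existence and functoriality of the Mal'cev completion and of the Mal'cev correspondence, and the fact that both commute with automorphisms and with fixed points---are standard, exactly as used in the proof of Theorem~\ref{gr-lie}.
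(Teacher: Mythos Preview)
Your proposal is correct and follows essentially the same route as the paper: reduce to finitely generated $FH$-invariant subgroups, pass to the Mal'cev completion, transfer the fixed-point hypotheses, apply the Mal'cev correspondence to obtain a Lie $\Q$-algebra, and invoke Theorem~\ref{liealg}. The only difference is cosmetic: where the paper simply quotes that $C_{\widehat G}(\alpha)$ is the completion of $C_G(\alpha)$, you spell out the unique-roots/isolator argument explicitly.
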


\begin{proof} It is clearly sufficient to prove the theorem for finitely generated $FH$-invariant
subgroups of $G$, so we can assume that $G$ is finitely generated and therefore nilpotent.
Let $\widehat G$ be the \textit{Mal'cev completion} of $G$ obtained by
adjoining all roots of non-trivial elements of $G$ (see
\cite{mal49} or, for example, \cite[\S\,10.1]{khubook2}).
Forming the completion preserves the
nilpotency class of a nilpotent subgroup. Every automorphism
$\alpha$ of $G$ can be canonically extended to an automorphism of
$\widehat G$, which we denote by the same letter. The fixed point
subgroup $C_{\widehat G}(\alpha )$ is the completion of $C_{
G}(\alpha )$. Applying this to $FH$, we see that as a group of automorphisms of $\widehat
G$, it inherits the properties that $C_{\widehat G}(F)=1$ and $C_{\widehat
G}(H)$ is nilpotent of class $c$.

Under the Mal'cev correspondence, the radicable locally nilpotent
torsion-free group $\widehat G$ can be viewed as a locally
nilpotent Lie algebra $L$ (with the same underlying set) over the
field of rational numbers $\Q$, with the Lie ring operations given
by the inversions of the Baker--Campbell--Hausdorff formula. The
automorphisms of $\widehat G$ are automorphisms of $L$ acting on
the same set in the same way, and we denote them by the same
letters. Thus, $C_{L}(F)=0$ and $C_{L}(H)$ is nilpotent of class
$c$. By Theorem~\ref{liealg} the Lie algebra $L$ is nilpotent
of $(c,q)$-bounded class. Hence the
same is true for $\widehat G$ and therefore also for $G$.
\end{proof}

\subsubsection*{Lie rings}
We now prove a similar theorem for arbitrary Lie rings. The
additional conditions on the additive group of the Lie ring will be
automatically satisfied when the theorem is
later used in the proof of the main result on the nilpotency class of a finite group
with a metacyclic Frobenius group of automorphisms. We also include
the solvability result, which does not require those additional conditions.

\begin{theorem}\label{liering}
Let $FH$ be a Frobenius group with cyclic kernel
$F$ of order $n$ and complement $H$ of order $q$. Suppose that
$FH$ acts by automorphisms on a Lie ring $L$ in such a way that
$C_L(F)=0$ and $C_L(H)$ is nilpotent of class $c$. Then

{\rm (a)} the Lie ring  $L$ is solvable of $(c,q)$-derived length;

{\rm (b)} for some functions $u=u(c,q)$ and $v=v(c,q)$ depending only on
$c$ and $q$, the Lie subring $n^{u}L$ is nilpotent of class $v$, that is,
$\gamma_{v+1}(n^{u}L)=n^{u(v+1)}\g _{v+1}(L)=0$;

{\rm (c)} moreover, $L$ is nilpotent of $(c,q)$-bounded class in either of the following cases:
\vspace{-1.5ex}
 \begin{enumerate}\itemsep-1ex\itemindent1.7em
\item[{\rm (i)}] $L$ is a Lie algebra over a field;
\item[{\rm (ii)}] the additive group of $L$ is periodic (includes the case of $L$ finite);
\item[{\rm (iii)}] $n$ is invertible in the ground ring of $L$;
\item[{\rm (iv)}] $nL=L$;
\item[{\rm (v)}] $n$ is a prime-power.
\end{enumerate}
\end{theorem}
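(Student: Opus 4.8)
The plan is to reduce everything to the $(\Z/n\Z)$-graded setting of \S\,4, following the proof of Theorem~\ref{liealg}; the one new difficulty is that over an arbitrary Lie ring the generator $\varphi$ of $F$ need not act semisimply, so an eigenspace decomposition is available only after extending coefficients, and in the presence of $n$-torsion only up to powers of $n$. For (a) this loss is harmless. Extend the ground ring to $R=\Z[\omega][1/n]$, where $\omega$ is a primitive $n$th root of $1$, and set $L'=L\otimes_{\Z}R$; since $R$ is flat over $\Z$ we have $C_{L'}(F)=0$ and $C_{L'}(H)$ still nilpotent of class at most $c$, and since $n$ is invertible in $R$ the automorphism $\varphi$ acts semisimply, giving a grading $L'=\bigoplus_iL'_i$ with $L'_0=C_{L'}(F)=0$. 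The triple $(n,q,r)$ satisfies~\eqref{prim} because $h$ acts fixed-point-freely on every subgroup of $F$, and expanding products of $H$-orbit sums and dividing by $n$, exactly as in the proof of Theorem~\ref{liealg}, shows that $L'$ satisfies the selective $c$-nilpotency condition~\eqref{select}. Proposition~\ref{razresh} then gives $(L')^{(f)}=0$, where $f=f(c,q)$ is the function furnished there, so $L^{(f)}\otimes_{\Z}R=0$, i.e. $L^{(f)}$ is $n$-power-torsion. Decompose $L^{(f)}=\bigoplus_{p\mid n}(L^{(f)})_p$ into primary components; each is an $FH$-invariant $p$-group with trivial fixed points for $\varphi$, so if $F_{p'}$ denotes the Hall $p'$-subgroup of $F$ then $C_{(L^{(f)})_p}(F_{p'})=0$ (else the $p$-group $F_p$ would have nontrivial fixed points on this nonzero $p$-group) and $F_{p'}H$ is again a Frobenius group. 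As $|F_{p'}|$ is coprime to $p$ it acts invertibly on $(L^{(f)})_p$, so the argument just given applies verbatim with $F_{p'}$ in place of $F$ and shows that $(L^{(f)})_p$ has derived length at most $f(c,q)$, a bound independent of $p$. Hence $L^{(2f(c,q))}=(L^{(f)})^{(f(c,q))}=0$, which is (a).

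For (b) one must keep the $n$-torsion: put $\tilde L=L\otimes_{\Z}\Z[\omega]$ and $\tilde L_i=\{x:x\varphi=\omega^ix\}$, so that $[\tilde L_i,\tilde L_j]\subseteq\tilde L_{i+j}$, $\tilde L_0=0$, and the convolution operators $\sum_k\omega^{-ik}\varphi^k$ show $n\tilde L\subseteq\sum_i\tilde L_i$ while any relation $\sum_ix_i=0$ with $x_i\in\tilde L_i$ forces $nx_i=0$. Organise $\bigoplus_i\tilde L_i$ as a $(\Z/n\Z)$-graded Lie ring $M$ with trivial zero component, equipped with the surjective sum map $\pi\colon M\to\sum_i\tilde L_i$ whose kernel is annihilated by $n$; the collecting argument now only gives $n\,[x_{d_1},\dots,x_{d_{c+1}}]=0$ for $r$-independent sequences, so the ideal $I$ of $M$ generated by these commutators is annihilated by $n$ and $\bar M=M/I$ is a $(\Z/n\Z)$-graded Lie ring with trivial zero component satisfying~\eqref{select} exactly. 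On $\bar M$ I would run the nilpotency argument of the proof of Theorem~\ref{liealg}: induction on the derived length, which is finite and $(c,q)$-bounded by Proposition~\ref{razresh}, reduces via~\eqref{chao} to the metabelian case, and there one uses Corollary~\ref{cor-razresh} together with Lemma~\ref{vander}, whose factor $n^{l_0}$ now plays the role that ``$n$ invertible in the ground field'' played in Lemma~\ref{l_b-metab}. Keeping track of the accumulated powers of $n$ gives that $n^{u_0}\bar M$ is nilpotent of class $v$ for $(c,q)$-bounded $u_0,v$; pulling this back through $\pi$ and using $\sum_i\tilde L_i\supseteq n\tilde L$ yields $n^{u_1}\gamma_{v+1}(L)=0$ with $u_1$ $(c,q)$-bounded, i.e. $\gamma_{v+1}(n^{u_1}L)=n^{u_1(v+1)}\gamma_{v+1}(L)=0$, which is (b).

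Part (c) is then short. Case (i) is precisely Theorem~\ref{liealg}. In cases (iii) and (iv) multiplication by $n$ on $L$ is bijective, respectively surjective, so $n^{u}L=L$ and (b) applies at once. For (ii), (b) shows that $\gamma_{v+1}(L)$ is $n$-power-torsion, hence a direct sum of primary components $K_p$ ($p\mid n$); each $K_p$ is a $p$-group on which $|F_{p'}|$ acts invertibly and which admits the Frobenius group $F_{p'}H$ with trivial fixed points for $F_{p'}$, so case (iii) (with $F_{p'}$ in place of $F$) bounds the nilpotency class of $K_p$ in terms of $c$ and $q$ alone, hence also that of $\gamma_{v+1}(L)=\bigoplus_pK_p$ and so of $L$. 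For (v), with $n=p^a$, part (b) shows that $\gamma_{v+1}(L)$ has $p$-power-torsion additive group on which the $p$-group $F$ acts with no nonzero fixed point (inherited from $C_L(F)=0$); since a finite $p$-group of automorphisms of a nonzero abelian $p$-group always has nontrivial fixed points, $\gamma_{v+1}(L)=0$. The step I expect to be hardest is the metabelian case of (b): reproducing the argument of Lemma~\ref{l_b-metab} inside $\bar M$, where $n$ is no longer invertible, so that a homogeneous component is recovered from an $H$-orbit sum only after multiplication by a power of $n$; all the bookkeeping of these powers, and the verification that the ideals factored out are genuinely $n$-torsion, is concentrated there.
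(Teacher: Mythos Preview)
Your overall architecture matches the paper's: reduce to a $(\Z/n\Z)$-graded situation with trivial zero component, derive (a) from Proposition~\ref{razresh}, get (b) by adapting the metabelian argument of Lemma~\ref{l_b-metab} with Vandermonde factors $n^{l_0}$ and then climbing the derived series via~\eqref{chao}, and read off (c) from (b). Your localisation trick for (a), passing to $L\otimes\Z[\omega][1/n]$ to obtain an honest grading and then using that $L^{(f)}\otimes\Z[1/n]=0$ forces $L^{(f)}$ to be $n$-power-torsion, is a clean variant of what the paper does (the paper stays in $L\otimes\Z[\omega]$ and uses the ``almost linear independence'' \eqref{almost-ind} together with Corollary~\ref{cor-razresh}); both routes end with the same primary decomposition and the replacement $F\rightsquigarrow F_{p'}$.

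Two points deserve correction. First, in (b) you propose to run the metabelian step \emph{inside} $\bar M=M/I$, where the selective $c$-nilpotency holds exactly. But Case~3 of Lemma~\ref{l_b-metab} (and its Lie-ring version Lemma~\ref{l_b-metab2}) uses more than~\eqref{select}: it needs that the $H$-orbit sums lie in a subring that is nilpotent of class~$c$. In $\bar M$ there is no reason for $C_{\bar M}(H)$ to be nilpotent of class~$c$; the vanishing $[X_1,\dots,X_{c+1}]=0$ is available in $\tilde L$, hence in $M$ only modulo $\ker\pi$ (i.e.\ up to $n$-torsion), and that $n$-torsion element need not lie in~$I$. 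So the $H$-orbit argument must be carried out in $M$ (equivalently, in $K=\sum_iL_i$, as the paper does), incurring an extra factor of~$n$ at that step, rather than in~$\bar M$. This is exactly the content of the paper's Lemma~\ref{l_b-metab2} and is consistent with your closing remark that the hard bookkeeping lives here; just be aware that $\bar M$ alone does not carry enough structure.

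Second, your argument for (c)(ii) has a genuine gap. From (b) you correctly get that $\gamma_{v+1}(L)$ is $n$-power-torsion and then bound the nilpotency class of each primary piece $K_p$, hence of $\gamma_{v+1}(L)$. But ``$\gamma_{v+1}(L)$ is nilpotent of bounded class'' does \emph{not} imply ``$L$ is nilpotent of bounded class'' (think of a Lie algebra with an abelian ideal $A$ and a derivation acting as the identity on $A$: $\gamma_2(L)=A$ is abelian, yet $L$ is not nilpotent). You also never use hypothesis~(ii). The fix is immediate and is what the paper does: since the additive group of $L$ is periodic, decompose $L$ itself as $\bigoplus_p T_p$; each $T_p$ is an $FH$-invariant ideal, for $p\nmid n$ one has $nT_p=T_p$ and (iv) applies, while for $p\mid n$ one replaces $F$ by $F_{p'}$ (which still acts fixed-point-freely on the $p$-group $T_p$) and again applies~(iv). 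This bounds the nilpotency class of each $T_p$, hence of~$L$.
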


We included for completeness the case of algebras over a field,
which is Theorem~\ref{liealg}. It is not clear
at the moment if the additional conditions (i)--(v) are really
necessary. However, many important cases are covered,
including those needed in our group-theoretic applications.
The solvability result is included precisely because it
does not require these conditions: of course,  $L$ is solvable of
$n$-bounded derived length by Kreknin's theorem, but we obtain a
bound for the derived length in terms of $|H|$ and the nilpotency
class of $C_L(H)$.

\begin{proof}
The proof is basically along the same lines as the proof of
Theorem~\ref{liealg} for algebras; the complications arise
because we no longer have a direct sum of eigenspaces forming a
$({\Bbb Z}/n {\Bbb Z})$-grading.

We extend the
ground ring by a primitive $n$th root of unity $\omega$ setting $\widetilde L=L\otimes_{\Bbb
Z}\Bbb Z[\omega]$. The group $FH$ acts in the natural way on $\widetilde L$
and the action inherits the conditions that $C_{\widetilde
L}(F)=0$
and $C_{\widetilde L}(H)$ is nilpotent of class~$c$. Since the other conditions (i)--(v) would also hold
for $\widetilde L$, and the conclusion of the theorem for $L$ would follow from the same conclusion for $\widetilde L$,
we can assume that $L=\widetilde L$ and the ground ring contains~$\omega$, a~primitive $n$th root
 of~$1$.

Let $F=\langle\varphi\rangle$. For each $i=0,\dots,n-1$ we
define the `eigenspace' for $\omega^i$ as $L_i=\{x\in L\mid x^{\varphi}=\omega^ix\}$.
Then
$$[L_i, L_j]\subseteq L_{i+j\,(\rm{mod}\,n)}\qquad \text{and}\qquad nL\subseteq \sum _{i=0}^{n-1}L_i.$$
This is `almost a $(\Z /n\Z
)$-grading' --- albeit of $nL$ rather than of $L$, and although the sum is not
necessarily direct, any linear dependence of elements from different $L_i$ is annihilated by $n$:
\begin{equation}\label{almost-ind}
\text{if}\quad l_1+l_2+\dots +l_{n-1}=0,\quad\text{then}\quad nl_1=nl_2=\dots =nl_{n-1}=0
\end{equation}
(see, for example,
\cite[Lemma 4.1.1]{khu}).  We also have $L_0=C_L(F)=0$.

We shall mostly work with the $FH$-invariant subring $K=\sum_{i=0}^{n-1}L_i$.

As in the proof of Theorem~\ref{liealg}, let $H= \langle h \rangle$ and $\varphi^{h^{-1}}
= \varphi^{r}$ for some $1\leq r \leq n-1$, so that  $n,q,r$ satisfy \eqref{prim} and
${L_i}^h = L_{ri}$ for all $i\in \Bbb Z/n\Bbb Z$.

Using the same notation $u_{r^ik}=u_k^{h^i}$ and
$X_j=x_{a_j}+x_{ra_j}+\cdots+x_{r^{q-1}a_j}\in C_L(H)$, we have
$$[X_1,\dots,X_{c+1}]=0.$$
Expand all the $X_i$ and consider the resulting linear combination of commutators in the $x_{r^ja_i}$.
Suppose that $[x_{a_1},\dots,x_{a_{c+1}}]\ne 0$. In the `semisimple' case,
the sum $K=\sum_{i=0}^{n-1}L_i$ is direct and
there would have to be other terms in the same component
$L_{a_1+\cdots+a_{c+1}}$, which implies that
$$a_{1}+\dots+a_{c+1}=r^{\alpha_1}a_{1}+\dots+r^{\alpha_{c+1}}a_{c+1}$$
for some $\alpha_i\in\{0,1,2,\dots,q-1\}$ not all of which are zeros, so that
$(a_1,\dots, a_{c+1})$ is an $r$-dependent sequence.
This would mean that $K=\sum_{i=0}^{n-1}L_i$ satisfies the selective $c$-nilpotency condition
\eqref{select}.

In the general case, where $K$ may not be the direct sum of the $L_i$, it can
happen that there are no other terms in the expanded
expression with the same sum of indices
${a_1+\cdots+a_{c+1}}$. But then the `almost linear independence' \eqref{almost-ind}  implies that
$n[x_{a_1},\dots,x_{a_{c+1}}]=0$. Thus, in any case,
\begin{equation}\label{if-r-ind}
n[x_{d_1},x_{d_2},\dots,
x_{d_{c+1}}]=0\qquad \text{ whenever } (d_1,\dots,d_{c+1}) \text{ is
$r$-independent}.
\end{equation}

We are now ready to prove  part (a) on solvability.
Let $y_{i_j}\in L_{i_j}$ be any elements of the `eigenspaces' (under Index Convention).
By Corollary~\ref{cor-razresh}(a) the commutator $\delta_{{f(c,q)}}(y_{i_1}, y_{i_2},
\dots, y_{i_{2^{f(c,q)}}})$ can be represented as a linear
combination of commutators
each of which contains either a
subcommutator with zero modulo $n$ sum of indices or a
subcommutator $[g _{d_1}, g_{d_2},\dots ,g_{d_{c+1}}
\,]$ with $r$-independent sequence $(d_1,\dots,d_{c+1})$ of indices.
By hypothesis, $L_0=0$, and
$n[g _{d_1}, g_{d_2},\dots ,g_{d_{c+1}} \,]=0$ by property~\eqref{if-r-ind}.
Hence, $n\delta_{{f(c,q)}}(y_{i_1}, y_{i_2}, \dots, y_{i_{2^{f(q,
c)}}})=0$.
Thus, $nK^{(f(c,q))}=0$ and therefore
$$n(nL)^{f(c,q))}=n^{2^{f(c,q)}+1}L^{(f(c,q))}=0. $$
In particular,  the additive group of $T=L^{(f(c,q))}$ is a
periodic abelian group. We
decompose it into the direct sum of
Sylow subgroups
$$T=T_{p_1}\oplus T_{p_2}\oplus \cdots \oplus T_{p_r},$$
where $p_1, p_2,\dots, p_r$ are the prime divisors of $n$. The
$T_{p_k}$ are $FH$-invariant ideals and
$[T_{p_i}, T_{p_j}]=0$ for $i\neq j$.

Let $p\in \{p_1, p_2, \dots, p_r\}$ and $n=p^ks,$ where
$(p,s)=1$. Let $ \langle \psi \rangle$ be the Sylow $p$-subgroup
of  $F=\langle \varphi\rangle$ and let $\langle
\varphi\rangle =\langle \psi\rangle \times \langle \chi\rangle$,
where $s=|\chi |$.
 The fixed-point subring $C=C_{T_{p}}(\chi )$ is a
$\psi$-inva\-riant abelian $p$-group and $ C_C(\psi )\subseteq
C_L(\varphi )=0$. The automorphism $\psi$
of order $p^t$ acting on an abelian $p$-group necessarily has
non-trivial fixed points. Hence, $C_{T_{p}}(\chi )=0$. Thus the
subring $T_p$ admits the Frobenius group of automorphisms
$\langle\chi\rangle H$ with cyclic kernel $\langle\chi\rangle$ and
complement $H$ of order $q$ such that $C_{T_p}(\chi)=0$ and
$C_{T_p}(H)$ is nilpotent of class at most $c$. By the above argument,
$$0=s(sT_p)^{(f(c,q))}=s^{2^{f(c,q)}+1}T_p^ {(f(c,q))},$$
whence $T_p^{(f(c,q))}=0$ for each prime $p$. Hence $T^{(f(c,q))}=(L^{(f(c,q))})^{(f(c,q))}=0$ and
$L$ is solvable of derived length at most $2f(c,q)$. The proof of part (a) is complete.
\medskip

In the proof of the nilpotency statements, we need an analogue of Lemma~\ref{l_b-metab}.

\begin{lemma}\label{l_b-metab2}
There are  $(c,q)$-bounded numbers $l$ and $m$ such that for every $b\in \Bbb{Z}/n\Bbb{Z}$
$$n^{l}[K,\underbrace{L_b,\dots,L_b}_{m}
]\subseteq \break [[K,K],\,[K,K]].$$
\end{lemma}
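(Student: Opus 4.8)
The plan is to mimic the structure of Lemma~\ref{l_b-metab} in the metabelian Lie-algebra case, but working inside the `almost-graded' subring $K=\sum_{i=0}^{n-1}L_i$ and keeping careful track of the powers of $n$ that must be cleared to pass between $L$, $nL$, and the honestly graded situation. The target being a congruence modulo $[[K,K],[K,K]]$ (rather than an equality with $0$) reflects that, just as in the algebra proof, we can only hope to control commutators of the form $[[K,K]_a,L_b,\dots,L_b]$ after killing `two levels down' in the derived series; here $[K,K]$ plays the role of the metabelian ideal. So the first step is to reduce to proving that $n^{l}[[K,K]_a,\underbrace{L_b,\dots,L_b}_{m-1}]\subseteq [[K,K],[K,K]]$ for all $a,b\in\Bbb Z/n\Bbb Z$, using that modulo $[[K,K],[K,K]]$ the subring $[K,K]/[[K,K],[K,K]]$ behaves metabelianly over $K$, so the identity $[x,y,z]\equiv[x,z,y]$ holds there; the annihilation \eqref{almost-ind} will let us insert suitable powers of $n$ to make homogeneous manipulations legitimate.

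Next I would run the same case division on $(n,b)$ as in Lemma~\ref{l_b-metab}. When $(n,b)=1$: if $n>N(c,q)$ then $o(b)=n$ is large and Lemma~\ref{l_b} (its combinatorial form, Corollary~\ref{cor-razresh}(b), applied to the `eigenspace' elements together with the annihilation property \eqref{if-r-ind}) gives the conclusion with $m-1=w$ and a single power of $n$ cleared; if $n\le N(c,q)$ we choose $k<n$ with $a+kb\equiv 0$, so $[L_a,L_b,\dots,L_b]$ ($k$ times) lands in $L_0=0$, and $m-1=N(c,q)$ works. When $(n,b)\ne 1$ with a prime $s\mid(n,b)$: if $s\mid a$ we induct on $|F|$ via the $FH$-invariant subring $C_L(\varphi^{n/s})=\sum_i L_{si}$, on which $\varphi$ acts as a fixed-point-free automorphism of order $n/s$; the key point (as in the algebra case) is that the functions $l,m$ do not grow in this induction, so no dependence on $|F|$ creeps in. The base case $n$ prime forces $(n,b)=1$ and is already handled.

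The remaining and main case is $s\mid b$, $s\mid n$, $s\nmid a$. Here I would reproduce the Vandermonde argument: since all indices $r^i a$ are distinct modulo $s$ while the indices above the brace are divisible by $s$, the vanishing of the $C_L(H)$-commutator $[u_a+u_{ra}+\dots,\,\underbrace{(v_b+v_{rb}+\dots),\dots,(w_b+w_{rb}+\dots)}_{c}]=0$ splits off to give $n^{?}[L_a,\underbrace{Z,\dots,Z}_{c}]\subseteq[[K,K],[K,K]]$, where $Z$ is the span of the $H$-symmetrizations $x_b+x_{rb}+\dots+x_{r^{q-1}b}$; I would apply Lemma~\ref{vander} with $M=L_b+L_{rb}+\dots+L_{r^{q-1}b}$ and $m=q$ to write $n^{l_0}L_b\subseteq\sum_{j=0}^{q-1}Z^{\varphi^j}$ (this is where a genuine power of $n$ is unavoidable, since $n$ need not be invertible). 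Substituting this into $[[K,K]_a,L_b,\dots,L_b]$ ($q(c-1)+1$ times), expanding, and permuting the $Z^{\varphi^j}$-entries modulo $[[K,K],[K,K]]$, we can stack $c$ copies of some $Z^{\varphi^{j_0}}$ right after $[[K,K]_a$, which vanishes by the $\varphi^{j}$-translate of the split-off relation. Bookkeeping the powers of $n$ through the Vandermonde step, the symmetrization step, and the $|F|$-induction yields the final $(c,q)$-bounded exponent $l$ and the $(c,q)$-bounded $m=q(c-1)+2$ (or the max of this with $N(c,q)$ from the small-$n$ branch). I expect the main obstacle to be precisely this accounting: ensuring that the power $n^{l}$ needed in the $s\nmid a$ branch, after being fed through the $|F|$-induction for the $s\mid a$ branch, stays bounded solely in terms of $c$ and $q$; the fix is to note, as in Lemma~\ref{l_b-metab}, that the induction on $|F|$ replaces $n$ by $n/s$ without touching $l$ or $m$, so the exponent is determined once and for all by the Vandermonde bound $l_0=l_0(q)$ and the structure of Corollary~\ref{cor-razresh}(b).
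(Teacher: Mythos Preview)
Your proposal is correct and follows essentially the same route as the paper's own proof: the same reduction to $n^{l}[[K,K]_a,L_b,\dots,L_b]\subseteq[[K,K],[K,K]]$, the same case division on $(n,b)$ (with Corollary~\ref{cor-razresh}(b) and \eqref{if-r-ind} for $(n,b)=1$ and large $n$, the trivial index-sum argument for small $n$, induction on $|F|$ via $C_L(\varphi^{n/s})$ when $s\mid a$, and the Vandermonde argument via Lemma~\ref{vander} when $s\nmid a$). Two minor points: in the $s\nmid a$ branch the paper actually obtains $n[L_a,\underbrace{Z,\dots,Z}_c]=0$ (not merely containment in $[[K,K],[K,K]]$) directly from \eqref{almost-ind}, so the power there is exactly $n^1$; and your final $m$ should also be taken at least $w+1$ to cover the large-$n$, $(n,b)=1$ branch.
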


\begin{proof}
We denote $[K,K]\cap L_a$ by
$[K,K]_a$. Clearly, it suffices to show that
$$n^l[[K,K]_a,\underbrace{L_b,\dots,L_b}_{m-1}
]\subseteq [[K,K],\,[K,K]]$$ for every $a,b\in \Bbb{Z}/n\Bbb{Z}$; we can of course assume
that $a,b\ne 0$. First suppose that $(n,b)=1$. If $n$ is large
enough, $n>N(c,q)$, then the order $o(b)=n$ is also large enough. Then by  Corollary~\ref{cor-razresh}(b),
for the corresponding function $w=w(c,q)$,
any commutator $[y_{a}, \underbrace{x_{b}, y_b, \dots ,z_{b}}_{w}]$
can be represented as a linear combination of commutators
each of which has either a subcommutator with zero
modulo $n$ sum of indices or a subcommutator $[g
_{u_1}, g_{u_2},\dots ,g_{u_{c+1}} \,]$ with $r$-independent
sequence of indices $(u_1,\dots,u_{c+1})$. Since $L_0=0$ and $n[g
_{u_1}, g_{u_2},\dots ,g_{u_{c+1}} \,]=0$ by property~\eqref{if-r-ind}, we obtain
that $n[[K,K]_a,\underbrace{L_b,\dots,L_b}_{m-1}
]=0$, so that we can put $m-1=w$. If $n\leq N(c,q)$, then there is
a positive integer $k<n$ such that $a+kb=0$ in
$\Bbb{Z}/n\Bbb{Z}$. Then
$[L_a,\underbrace{L_b,\dots,L_b}_{k} ]\subseteq L_0=0$, so that we can put $m-1=N(c,q)$.

Now suppose that $(n,b)\ne 1$, and let $s$ be a prime dividing
both $n$ and $b$. If $s$ also divides $a$, then the result follows
by induction on $|F|$ applied to the Lie ring $C_L(\f
^{n/s})=\sum _iL_{si}$ containing both $L_a$ and $L_b$: this Lie subring
is $FH$-invariant and $\f $ acts on it as an
automorphism of order $n/s$ without non-trivial fixed points. The
basis of this induction is the case of $n=|F|$ being a prime,
where necessarily $(n,b)=1$, which has already been dealt with above.
The functions $l=l(c,q)$ and $m=m(c,q)$ remain the same in the induction step,
so no dependence on $F$ arises.

It remains to consider the case where the prime $s$ divides
both $b$ and $n$ and does not divide $a$.
Using the same notation $x_{r^ik} =x_k^{h^i}$ (under Index Convention),  we
have
$$\Big[u_a+u_{ra}+\dots +u_{r^{q-1}a},\,\underbrace{
(v_b+v_{rb}+\dots +v_{r^{q-1}b}),\dots , (w_b+w_{rb}+\dots +w_{r^{q-1}b})}_{c}\,\Big]=0$$
for any $c$ elements $v_b,\dots ,w_b \in L_b$, because the sums are  elements of $C_L(H)$.

By \eqref{prim} any two indices $r^ia,\,r^ja$ here are different modulo $s$, while all the indices
above the brace are divisible by $s$, which divides $n$.
Hence by the `almost linear independence' \eqref{almost-ind} we also have
\begin{equation}\label{mod-s3}
n\Big[u_a,\, \underbrace{ (v_b+v_{rb}+\dots
+v_{r^{q-1}b}),\dots , (w_b+w_{rb}+\dots +w_{r^{q-1}b})}_{c}\,\Big]=0.
\end{equation}
Let $Z$ denote the additive subgroup of
$\sum _{i=0}^{q-1}L_{r^ib}$ generated by all the sums $x_b+x_{rb}+\dots
+x_{r^{q-1}b}$ over $x_b\in L_b$ (in general $Z$ may not contain all the fixed points of $H$ on $\sum _{i=0}^{q-1}L_{r^ib}$).
Then \eqref{mod-s3} means that
$$n\Big[L_a,\, \underbrace{Z,\dots ,
Z}_{c}\,\Big]=0.$$
 Applying $\f ^j$ we also obtain
\begin{equation}\label{mod-s4}
n\Big[L_a,\,
\underbrace{Z^{\f ^j},\dots , Z^{\f
^j}}_{c}\,\Big]=0.\end{equation}

We now apply Lemma~\ref{vander} with  $M=L_b+L_{rb}+\dots
+L_{r^{q-1}b}$ and $m=q$ to $w=v_b+v_{rb}+\dots
+v_{r^{q-1}b}\in Z$ for any $v_b\in L_b$.
As a result, $n^{l_0}L_b\subseteq \sum _{j=0}^{q-1}Z^{\f
^j}$ for some $(c,q)$-bounded number $l_0$.

We now claim that
$$n^{l_0(q(c-1)+1)+1}[[K,K]_a,\, \underbrace{L_b,\dots ,
L_b}_{q(c-1)+1}]=n[[K,K]_a,\, \underbrace{n^{l_0}L_b,\dots ,
n^{l_0}L_b}_{q(c-1)+1}]\subseteq [[K,K],\,[K,K]].$$
Indeed, after replacing $n^{l_0}L_b$  with $\sum
_{j=0}^{q-1}Z^{\f ^j}$ and expanding the sums, in each
commutator of the resulting linear combination we can freely
permute modulo $[[K,K],\,[K,K]]$ the entries $Z^{\f ^j}$.
Since there are sufficiently many of them, we
can place  at least $c$ of the same
$Z^{\f ^{j_0}}$ for some $j_0$ right after $[L,L]_a$ at the beginning,
which gives $0$ by \eqref{mod-s4}.
\end{proof}

We now prove part (b) of the theorem. Since $L$ is solvable of
$(c,q)$-bounded derived length by~(a), we can use induction on the
derived length of $L$. If $L$ is
abelian, there is nothing to prove. Assume that $L$ is metabelian, which is
the main part of the proof.
Consider $K=\sum L_i$. Let $l=l(c,q)$ and $m=m(c,q)$ be as in Lemma~\ref{l_b-metab2};  put
$g=(m-1)(q^{c+1}+c)+2$. For any  sequence of $g$ non-zero elements
$(a_1,\dots,a_g)$ in $\Bbb Z/n\Bbb Z$ consider the commutator
$[[K,K]_{a_1},L_{a_2},\dots,L_{a_g}]$.
If the sequence
$(a_1,\dots,a_g)$ contains an $r$-independent subsequence of
length $c+1$ that starts with $a_1$, by
permuting the $L_{a_i}$ we can assume that $a_1,\dots,a_{c+1}$ is the
$r$-independent subsequence. Then $n[[K,K]_{a_1},L_{a_2},\dots,L_{a_g}]=0$  by \eqref{if-r-ind}.
If the sequence $(a_1,\dots,a_g)$ does not contain an $r$-independent subsequence of
length $c+1$  starting with $a_1$, then by
Lemma~\ref{rigid}  the sequence $(a_1,\dots,a_g)$
contains at most $q^{c+1}+c$ different values. The number $g$ was
chosen big enough to guarantee that either the value of $a_1$
occurs in $(a_1,\dots,a_g)$ at least $m+1$ times or, else, another
value, different from $a_1$, occurs at least $m$ times. Using that
$[x,y,z]=[x,z,y]$ for $x\in [K,K]$ we can assume that $a_2=\dots=a_{m+1}$, in which
case it follows from Lemma~\ref{l_b-metab2} that
$n^l[[K,K]_{a_1},L_{a_2},\dots,L_{a_g}]=0$. Thus, in any case $n^l\g _{g+1}(K)=0$.
Clearly, we can also choose a $(c,q)$-bounded number $l_1$ such that
$\g _{g+1}(n^{l_1}K)=0$. Since $nL\subseteq K$, we obtain $\g _{g+1}(n^{l_1+1}L)=0$, as required.

Now suppose that the derived length of $L$ is at least 3. By the
induction hypothesis, $\g _{g'+1}(n^{l_2}[L,L])=0$ for some
$(c,q)$-bounded numbers $g'$ and $l_2$. Since $g\geq 4$, we can
choose a $(c,q)$-bounded number $y$ such that $y(g+1-4)\geq
(g+1)(l_1+1)$ $\Leftrightarrow$ $y(g+1)\geq (l_1+1)(g+1)+4y$.
We set $x=\max\{ y,\, l_2/2\}$. Then for $M=n^{x}L$ we have both
\begin{align*}
\g _{g+1}(M) &=\g _{g+1}(n^{x}L)=n^{x(g+1)}\g _{g+1}(L)\\
                           &\subseteq n^{(l_1+1)(g+1)+4x}\g _{g+1}(L)\subseteq n^{4x}[[L,L],[L,L]]\\
                           &=[[M,M],[M,M]]
 \end{align*}
by the metabelian case above, and
$$\g _{g'+1}([M,M])= \g _{g'+1}(n^{2x}[L,L])\subseteq \g _{g'+1}(n^{l_2}[L,L])=0.$$
Together, these give the nilpotency of $M=n^xL$ of $(c,q)$-bounded class by the Lie
ring analogue \eqref{chao} of P.~Hall's theorem.
\medskip

We now proceed with nilpotency of $L$ itself in cases (i)--(v) of part (3) .

\textit{Case} (i). The case of a Lie algebra was already settled in Theorem~\ref{liealg}.

\textit{Cases} (iii) \textit{and} (iv). In these cases we have $L=nL=n^iL$ for any $i$. By
part~(b),
$$\gamma_{v+1}(L)=\gamma_{v+1}(n^{u}L)=0.$$

\textit{Case} (v). Let $n=p^k$ for a prime $p$. Then
$$\gamma_{v+1}((p^k)^{u}L)=p^{ku(v+1)}\g _{v+1}(L)=0$$
by part~(b), so that the additive group $\gamma_{v+1}(L)$ is a $p$-group. Since in this case
$F$ is a cyclic $p$-group acting fixed-point-freely, we must have $\gamma_{v+1}(L)=0$.

\textit{Case} (ii). Since here the additive group of $L$ is periodic, it decomposes
into the direct sum of Sylow subgroups $L=\bigoplus _pT_p$, which are $FH$-invariant ideals satisfying
$[T_q, T_p]=0$ for different prime numbers $p$,~$q$. It
is sufficient to prove that every ideal $T_p$
is nilpotent of $(c,q)$-bounded class. So we can assume that
the additive group of $L$ is a $p$-group. If $p$ does not divide $n$,
then $nL=L$ and the assertion follows from~(iv). If $p$ divides
$n$, then the Hall $p'$-subgroup $\langle \chi\rangle$ of $F$ acts
fixed-point-freely on~$L$, so that $L$ admits the Frobenius group of
automorphisms $\langle\chi\rangle H$ with $C_L(\chi)=0$. Since
$|\chi|L=L$, it remains to apply~(iv) to $\langle\chi\rangle H$.
\end{proof}

\subsubsection*{Finite groups}

As a consequence we obtain our main result on the bound for the nilpotency class of a finite group
with a metacyclic Frobenius group of automorphisms.

\begin{theorem}\label{group-nilpotency}
Suppose that a finite group $G$ admits a Frobenius group of automorphisms $FH$ with
cyclic kernel $F$ of order $n$ and complement $H$ of
order $q$ such that $C_G(F)=1$ and $C_G(H)$ is nilpotent of
class $c$. Then $G$ is nilpotent
of $(c,q)$-bounded class.
\end{theorem}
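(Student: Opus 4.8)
The plan is to deduce the theorem from its Lie ring counterpart, Theorem~\ref{liering}, applied to the Lie ring associated with the lower central series of $G$; this is precisely the situation for which the periodic-additive-group case of Theorem~\ref{liering} was included.

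Since $C_G(H)$ is nilpotent of class $c$, it is in particular nilpotent, so $G$ is nilpotent by Theorem~\ref{t-orn}(c). Form the associated graded Lie ring
\[
L=\bigoplus_{i\ge 1}\gamma_i(G)/\gamma_{i+1}(G),
\]
with bracket induced by group commutation. As $G$ is a finite nilpotent group, $L$ is a finite (hence periodic) Lie ring, concentrated in degrees $1,\dots,\mathrm{cl}(G)$. It suffices to bound $\mathrm{cl}(L)$ in terms of $c$ and $q$: the degree-$(k+1)$ component of $\gamma_{k+1}(L)$ is spanned by the images in $\gamma_{k+1}(G)/\gamma_{k+2}(G)$ of the group commutators $[g_1,\dots,g_{k+1}]$ with $g_i\in G$, hence equals $\gamma_{k+1}(G)/\gamma_{k+2}(G)$, so $\gamma_{k+1}(L)=0$ forces $\gamma_{k+1}(G)=\gamma_{k+2}(G)$, and therefore $\gamma_{k+1}(G)=1$ by the nilpotency of $G$.

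Next I would check that the action of $FH$ on $L$ induced via the characteristic subgroups $\gamma_i(G)$ inherits the hypotheses of Theorem~\ref{liering}. For each $i$ the subgroup $\gamma_{i+1}(G)$ is $FH$-invariant, normal in $G$, and satisfies $C_{\gamma_{i+1}(G)}(F)\le C_G(F)=1$; hence $C_{G/\gamma_{i+1}(G)}(F)=1$ by Lemma~\ref{l-car}, so $C_{\gamma_i(G)/\gamma_{i+1}(G)}(F)=1$, and since $F$ and $H$ both act degree-wise on $L$ we obtain $C_L(F)=0$. Applying Theorem~\ref{t-fp} to the $FH$-invariant group $\gamma_i(G)$ with its $FH$-invariant normal subgroup $\gamma_{i+1}(G)$ (again $C_{\gamma_{i+1}(G)}(F)=1$) gives $C_{\gamma_i(G)/\gamma_{i+1}(G)}(H)=C_{\gamma_i(G)}(H)\gamma_{i+1}(G)/\gamma_{i+1}(G)$, so every homogeneous element of $C_L(H)$ is the image of an element of $C_G(H)$. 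If $\bar x_1,\dots,\bar x_{c+1}$ are homogeneous elements of $C_L(H)$ arising from $x_1,\dots,x_{c+1}\in C_G(H)$, then $[\bar x_1,\dots,\bar x_{c+1}]$ is the image of the group commutator $[x_1,\dots,x_{c+1}]\in\gamma_{c+1}(C_G(H))=1$, hence vanishes; by multilinearity of the Lie bracket, $\gamma_{c+1}(C_L(H))=0$, that is, $C_L(H)$ is nilpotent of class at most $c$.

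Thus $L$ is a Lie ring with periodic additive group admitting the Frobenius group of automorphisms $FH$ with cyclic kernel $F$ of order $n$ and complement $H$ of order $q$, with $C_L(F)=0$ and $C_L(H)$ nilpotent of class at most $c$. By part~(c)(ii) of Theorem~\ref{liering}, $L$ is nilpotent of $(c,q)$-bounded class, and hence so is $G$. I expect the only genuinely delicate points to be the choice of the Lie ring — the associated graded Lie ring of the lower central series, rather than a Lazard-type Lie algebra $L_p(G)$, which would introduce an unwanted dependence on the prime $p$ — and the use of the non-coprime covering of fixed points (Theorem~\ref{t-fp}) to transfer the hypothesis on $C_G(H)$; the rest is routine bookkeeping.
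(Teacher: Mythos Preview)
Your proposal is correct and follows essentially the same route as the paper: pass to the associated Lie ring $L(G)=\bigoplus_i\gamma_i(G)/\gamma_{i+1}(G)$, use Lemma~\ref{l-car} to get $C_{L(G)}(F)=0$, use Theorem~\ref{t-fp} to lift homogeneous $H$-fixed points to $C_G(H)$ and hence deduce that $C_{L(G)}(H)$ is nilpotent of class at most $c$, and then apply Theorem~\ref{liering}(c)(ii). Your write-up is in fact slightly more explicit than the paper's in justifying why the nilpotency class of $L(G)$ controls that of $G$ and in singling out the periodic-additive-group case of Theorem~\ref{liering}.
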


The assumption of the kernel $F$ being cyclic is
essential: see examples at the end of the section.

\begin{proof}
The group $G$ is nilpotent by Theorem~\ref{t-orn}(c).
We have to bound the nilpotency class of $G$ in terms of $c$ and
$q$. Consider the associated Lie ring of the group~$G$
\begin{equation*}
L(G)=\bigoplus_{i=1}^m\gamma_i/\gamma_{i+1},
\end{equation*}
where $m$ is the
nilpotency class of $G$ and the $\gamma_i$ are terms of the
lower central series of $G$. The nilpotency class of $G$ coincides
with that of $L(G)$. The action of the group $FH$ on $G$ naturally
induces an action of $FH$ on $L(G)$. By the definition of the
associated Lie ring,
\begin{equation*}
C_{L(G)}(H)=\bigoplus_i C_{\gamma_i/\gamma_{i+1}}(H),
 \end{equation*}
which by  Theorem \ref{t-fp} is equal to
\begin{equation*}
\bigoplus_i
C_{\gamma_i}(H)\gamma_{i+1}/\gamma_{i+1}.
\end{equation*}
The Lie ring products in $L(G)$ are defined for elements of the
$\gamma_i/\gamma_{i+1}$ in terms of the images of the group
commutators and then extended by linearity. If $c$ is the
nilpotency class of $C_G(H)$, then any group commutator of weight
$c+1$ in elements of $C_G(H)$ is trivial; hence any Lie ring
commutator of weight $c+1$ in elements of
$C_{\gamma_i}(H)\gamma_{i+1}/\gamma_{i+1}$ is also trivial. Since
these elements generate $C_{L(G)}(H)$, this subring is nilpotent
of class at most $c$. Because $F$ acts fixed-point-freely on every
quotient $\gamma_i/\gamma_{i+1}$ by Lemma~\ref{l-car}, it
follows that $C_{L(G)}(F)=0$.

We now deduce from Theorem \ref{liering}(c) that
$L(G)$ is nilpotent of $(c,q)$-bounded class. Since the nilpotency class of
$G$ is equal to that of $L(G)$, the result follows.
\end{proof}

\subsubsection*{Examples}

\begin{example}
 The simple 3-dimensional Lie algebra $L$ of characteristic ${}\ne \nobreak 2$
with basis $e_1,e_2,e_3$ and structure   constants  $[e_1,e_2]=e_3$, \ $[e_2,e_3]=e_1$, \ $[e_3,e_1]=e_2$
admits the Frobenius group of automorphisms $FH$ with $F$ non-cyclic of order $4$ and $H$ of order $3$
such that $C_L(F)=0$ and $C_L(H)$ is one-dimensional. Namely, $F=\{1,f_1,f_2,f_3\}$, where $f_i(e_i)=e_i$ and
$f_i(e_j)=-e_j$ for $i\ne j$, and $H=\langle h\rangle$ with $h(e_i)=e_{i+1 \,({\rm mod}\,3)}$.
\end{example}

\begin{example}
 One can also modify  the above example to produce nilpotent Lie
rings $L$ of unbounded derived length admitting the same
non-metacyclic Frobenius  group of automorphisms $FH$ of order
$12$ such that $C_L(F)=0$ and $C_L(H)$ is abelian
(`one-dimensional'). Namely, let the additive group of $L$ be the
direct sum of three copies of $\Z /p^m\Z$ for some prime $p\ne 2$
with generators $e_1,e_2,e_3$ and let the structure constants of
$L$ be $[e_1,e_2]=pe_3$, \ $[e_2,e_3]=pe_1$, \ $[e_3,e_1]=pe_2$.
The action of $FH$ is `the same': $F=\{1,f_1,f_2,f_3\}$, where
$f_i(e_i)=e_i$ and $f_i(e_j)=-e_j$ for $i\ne j$, and $H=\langle
h\rangle$ with $h(e_i)=e_{i+1 \,({\rm mod}\,3)}$. Then $C_L(F)=0$ and $C_L(H)=\langle e_1+e_2+e_3\rangle$.
It is easy to see
that $L$ is nilpotent of class exactly $m$, while the derived length is approximately $\log m$.
\end{example}

\begin{example} If in the preceding example we choose $p>m$ , then the Lazard
correspondence based on the `truncated' Baker--Campbell--Hausdorff
formula (\cite{la}; also see, for example, \cite[\S\,10.2]{khubook2})
transforms $L$ into a finite $p$-group $P$ of the same
derived length as $L$, which admits the same group of
automorphisms $FH$ with $C_P(F)=1$ and $C_P(H)$ cyclic.
\end{example}

\bigskip

{\footnotesize
\noindent\textit{Acknowledgments.} The second  author was partly supported
by the Programme of Support of Leading Scientific Schools of the Russian Federation (grant NSh-3669.2010.1).
The third author was supported by CNPq-Brazil.

}

\end{document}